\let\theta=\vartheta
\let\kappa=\varkappa
\numberwithin{equation}{section}
\DeclareMathAlphabet{\mathbbold}{U}{bbold}{m}{n}
\DeclareMathAlphabet\mathbfcal{OMS}{cmsy}{b}{n}
\newcommand{\figuresfolder}{.}
\def\jp#1{#1}
\def\jpp#1{#1}
\def\ad#1{#1}
\newcommand{\h}[2]{h_{r,#1}(#2) }
\def\UQv1{L_{r,v}(\xvi)}
\def\BB{\mathcal B}
\def\NN{\mathcal N}
\def\DD{\mathcal D}
\def\OO{\mathcal O}
\def\xvi{x{{{{}}}}}
\newcommand{\weight}{a}
\newcommand{\crw}{\jpp{c^{r,\weight}}}
\newcommand{\crwi}{\jpp{c^{r,\weight}_i}}
\newcommand{\Ov}{\pc{v^{-1}\mathbb N_0}^d}
\newcommand{\qq}[2]{Q_{#1,v}(#2)}
\newcommand{\uu}{{U}}
\newcommand{\zz}{\zeta_r}
\newcommand{\dtp}[2]{\langle #1, #2 \rangle}
\newcommand{\eref}[1]{(\ref{#1})}
\newcommand{\fref}[1]{Fig.~\ref{#1}}
\newcommand{\tref}[1]{Theorem~\ref{#1}}
\newcommand{\lref}[1]{Lemma~\ref{#1}}
\newcommand{\aref}[1]{Assumption~\ref{#1}}
\newcommand{\dref}[1]{Def.~\ref{#1}}
\newcommand{\rref}[1]{Remark~\ref{#1}}
\newcommand{\cref}[1]{Corollary~\ref{#1}}
\newcommand{\cndref}[1]{Condition~\ref{#1}}
\newcommand{\pref}[1]{Proposition~\ref{#1}}
\newcommandx{\unsure}[2][1=]{\todo[linecolor=red,backgroundcolor=red!25,bordercolor=red,#1]{#2}}
\newcommand{\pc}[1]{\left(#1\right)}
\newcommand{\pq}[1]{\left[#1\right]}
\newcommand{\pg}[1]{\left\{#1\right\}}
\newcommand{\eg}{\emph{e.g.},~}
\newcommand{\R}{\mathcal R}
\newcommand{\F}{\mathcal F}
\newcommand{\I}{\mathcal I}
\newcommand{\ii}{\iota}
\newcommand{\kk}{\iota}
\newcommand{\Ip}{\mathcal I(\mathcal P)}
\newcommand{\px}[2]{\mathbb P_{#1}\left[ #2\right]}
\newcommand{\p}[1]{\mathbb P \left[ #1 \right]}
\newcommand{\RR}{\mathbb R_+}
\newcommand{\Rr}{\mathbb R}
\newcommand{\C}{\mathcal C}
\newcommand{\ie}{{\it i.e.}, }
\def\S{\mathcal S}
\newcommand{\Lv}{\mathcal L_v}
\newcommand{\net}{$(\S,\C,\R)$~}
\newcommand{\sign}{{\rm sign}}
\newcommand{\supp}{{\rm supp}\,}
\newcommand{\inn}{{\rm in}}
\newcommand{\outt}{{\rm out}}
\newcommand{\PP}{\mathcal P}
\newcommand{\WW}{{\mathcal W}}
\newcommand{\VV}{\mathcal V}
\newcommand{\onevec}{1\!\!1}
\newcommand{\xt}{x(t)}
\newtheorem{theorem}{Theorem}[section]
\newtheorem{remark}[theorem]{Remark}
\newtheorem{df}[theorem]{Definition}
\newtheorem{definition}[theorem]{Definition}
\newtheorem{lm}[theorem]{Lemma}
\newtheorem{ex}[theorem]{Example}
\newtheorem{exxx}{Example}
\newtheorem{coro}[theorem]{Corollary}
\newtheorem{cla}[theorem]{Claim}
\newtheorem{axx}{Assumption}
\newtheorem{cnd}[theorem]{Condition}
\newtheorem{pro}[theorem]{Proposition}
\newtheorem{proposition}[theorem]{Proposition}
\newcommand{\abbr}[1]{{\small\sc\lowercase{#1}}}
\newcommand{\dfn}[1]{\begin{df} #1 \end{df}}
\newcommand{\rmk}[1]{\begin{remark} #1 \end{remark}}
\newcommand{\trm}[1]{\begin{theorem} #1 \end{theorem}}
\newcommand{\exm}[1]{\begin{ex} #1 \end{ex}}
\newenvironment{myitem}
{\begin{itemize}
  \setlength{\itemsep}{1pt}
  \setlength{\parskip}{0pt}
  \setlength{\parsep}{0pt}}
{\end{itemize}}
\newenvironment{myenum}
{\begin{enumerate}[(a)]
  \setlength{\itemsep}{1pt}
  \setlength{\parskip}{0pt}
  \setlength{\parsep}{0pt}}
{\end{enumerate}}
\def\WWji{{\F}}
\def\Wji{{\F^*}}
\def\WWed{{(\WW_{j,\ii}^*)^{\epsilon_j,\delta_j}}}
\def\WWedd{(\WW(\PP)_{j,\ii}^*)^{\epsilon_j,\delta_j}}
\def\Xed{(\F^*)^{\epsilon ,\delta }}
\def\Cc{K_3}
\def\Clambda{K_1}
\def\Cepsilon{K_2}
\def\Cmonomial{K_5}
\def\Cv{K_4}
\def\d{{\rm d}}
\def\dt{{\d t}}
\def\emx#1{\emph{#1}\index{#1}}
\let\epsilon=\varepsilon
\let\rho=\varrho
\let\kappa=\kappa
\def\Ctwo{A+2B}
\def\Cthree{3B}
\def\cornertwo{(1,2)}
\def\cornerthree{(0,3)}
\title{\Large On the Geometry of Chemical Reaction Networks:\\
 Lyapunov Function and Large Deviations}
\author{A.~Agazzi \and A.~Dembo \and J.-P.~Eckmann}
\institute{
A. Agazzi \and J.-P. Eckmann \at D\'epartement de Physique Th\'eorique, Universit\'e de Gen\`eve, CH-1211 Gen\`eve 4 (Switzerland)
\and
A. Agazzi \and A. Dembo \at Statistics Department, Stanford University, CA 94305 Stanford (USA)
\and
J.-P. Eckmann \at Section de Math\'ematiques, Universit\'e de Gen\`eve, CH-1211 Gen\`eve 4 (Switzerland)
}
\date{\today}
\begin{document}
\maketitle
\begin{flushright}\emph{ To Herbert, J\"urg, and Tom, friends and inspirations.}\end{flushright}
\begin{abstract}
 In an earlier paper, we proved the validity of large deviations theory
 for the particle approximation of quite general chemical reaction networks (\abbr{crn}s). In this paper, we \jpp{extend} its scope and
 present a more geometric insight into the mechanism of
 that proof, exploiting the notion of spherical image of the reaction
 polytope. This allows to view the asymptotic behavior of the vector
 field describing the mass-action dynamics of chemical reactions as
 the result of an interaction between the faces of this polytope in
 different dimensions. We also illustrate some local aspects of the
 problem in a discussion of Wentzell-Freidlin (\abbr{wf}) theory, together with
 some examples.
\end{abstract}

\tableofcontents
\section{Introduction}

\jp{Our earlier paper \cite{agazzi17} establishes, among other things, a Large Deviation Principle (\abbr{ldp}) for the particle approximation of \abbr{crn}s.
In this work we extend the scope of that result, and present an alternative, constructive proof for it. To make this paper self-contained we also repeat some
definitions and statements of \cite{agazzi17} in quite some detail.
We further expand} the presentation by several examples which illustrate the difficulties
of finding adequate conditions for the \abbr{ldp} to
hold on the full phase space.
\jp{The alternative proof we present stresses the geometric aspects of}
the so-called ``toric
jet'' method \cite{gopal13} used in \cite{agazzi17}.
We will see that the geometric method, while similar in spirit to the
work of \cite{gopal13}, exhibits the simple geometric ideas which concern
the asymptotic nature of \abbr{ODE}'s with polynomial nonlinearities. \jp{In particular, the
 constructive proof in the present paper provides explicit stability estimates on the systems at hand, as opposed to
the proof by contradiction in \cite{agazzi17} and in \cite{gopal13}.}

Modeling the dynamics of large sets of chemical reactions has gained interest in recent years thanks to the increased amount of biological data on complex chemical systems. These dynamics take place in high dimensional spaces (with dimension equal to the number of species involved in the chemical reactions) and display highly complex dynamical behavior (all classes of attractors can be realized by models of chemical reaction systems \cite{magnasco97,vidal80}). The theoretical study of such dynamics is a topic of central interest in domains such as chemical reaction network theory \cite{feinberg87,horn72} and, more generally, systems biology, dedicated to the understanding of the laws governing large scale biochemical systems.
Progress in this direction can be made by considering stochastic effects in chemical systems. Indeed, as is the case in equilibrium statistical mechanics, large fluctuations induce transitions between the different attractors of the system, whose state finally stabilizes, in the probabilistic sense, in a neighborhood of the (in general) unique attractor with the lowest potential energy. In this sense, stochastic models and their probabilistic distribution (or the evolution of their stochastic paths) are ideal for studying the short and long term-behavior of complex biochemical systems.
Indeed, the mathematical study of stochastic chemical dynamics has been an active area of research already at the end of last century \cite{either86}. Recently, probability distributions for \abbr{crn}s in detailed balance have been fully characterized through an analogy with Jackson Networks in \cite{anderson15}. The steady states of such systems are equilibrium states and are unique for every invariant manifold of the dynamical system \cite{feinberg87,gunawardena03}. However, many systems of practical interest in cell biology fall in the category of nonequilibrium systems, whose study is still an open topic even at the fundamental level \cite{touchette09}.

One promising tool for the systematic study of the stochastic dynamics of such nonequilibrium systems is potential landscape theory for chemical reaction systems, and more generally for biochemical systems. This theory has recently been investigated in, \eg \cite{neqpot1}. Here a large deviations rate function from \abbr{wf} theory (called the \abbr{wf} quasipotential) has been indicated as a promising candidate for a potential landscape function.
However, the intuition developed in \cite{neqpot1} is not always rigorously justifiable in the case of mass action systems. In particular, the \abbr{WKB} approximation used for the derivation of the Hamilton Jacobi Equation (\abbr{HJE}) for the \abbr{WF} potential is not guaranteed to converge,  {and} a rigorous framework for infinite-dimensional integration in the space of paths has not yet been established. Moreover, it is well known \cite{toth99,vankampen92} that for mass action systems the diffusion processes studied in \cite{neqpot1} are large-volume approximations of the microscopically justified Markov jump models and that the potential landscapes predicted by \abbr{wf} theory differ in these two processes.

In this article, we initiate a formal study of potential landscape theory for Markov jump models of \abbr{crn}s with mass action kinetics in the large volume limit through the rigorous establishment of estimates \`a la  \abbr{WF}.
Markov jump models are the framework of choice for the modeling of the dynamics of \abbr{crn}s because they embody the discrete character of the interacting particles at the microscopic level. Such processes can then be scaled \cite{anderson15,either86,LDT2} to study the behavior of large amounts of reacting molecules by considering reactors with volume $v$ and taking $v$ as scaling parameter.
The effect of this scaling on the model is a reduction of the amplitude of the stochastic fluctuations in phase space. The study of fluctuations in finite time of stochastic systems in the large volume limit is the object of large deviations theory \cite{LDT1}.
The exponential estimates obtained with  {this} theory can be extended to infinite time intervals through \abbr{wf} theory, allowing in particular to give exponential estimates of exit times from compact sets in phase space, transition times between different attractors and invariant measure densities.
Furthermore, \abbr{WF} theory establishes a mathematically rigorous framework for potential landscape theory for biochemical systems through the definition of a quasipotential function $\VV(x)$. This quasipotential is one of the most promising candidates for the generalization of equilibrium potentials in statistical mechanics to nonequilibrium systems \cite{touchette09}.

The paper is structured as follows. In the first section we introduce the reader to deterministic and stochastic mass action models for the dynamics of \abbr{crn}s. We then outline some typical statements of large deviations theory with particular attention to problems in theoretical biochemistry. Appealing to the companion paper \cite{agazzi17}, we then introduce a class of \abbr{crn}s for which the applicability of a \abbr{ldp}  and ultimately \abbr{wf} estimates {is}
rigorously established. This class of \abbr{crn}s ({generalizing somewhat}
the strongly endotactic \abbr{crn}s of \cite{agazzi17,gopal13}) is characterized solely on the base of the topology of the underlying network of reactions and is therefore independent of reaction constants. This has the advantage of avoiding difficult estimates on the high-dimensional stochastic dynamics of the corresponding perturbed dynamical system. We also provide an alternative, constructive proof of geometric character of the results in \cite{agazzi17}. This approach is somewhat related to tropical geometry \cite{mihalkin15}. Finally, we discuss the extension of large deviations estimates to the infinite time horizon through \abbr{wf} theory and give a dynamically nontrivial example of a \abbr{crn} to which such results can be applied.

\section{The model}

We consider a set $\S := \{s_1, \dots, s_d\}$ of $d$ interacting \emph{chemical species}. The transitions between
different species are described by a set $\R = \{r_1, \dots, r_m\}$ of $m$ \emph{chemical reactions}.
Every reaction $r \in \R$ can uniquely be written in the form
$$r = \pg{ \sum_{i = 1}^d (c_\inn^r)_i s_i \rightharpoonup \sum_{i = 1}^d (c_\outt^r)_i s_i }~,$$
where the nonnegative integer vectors $c_\inn^r, c_\outt^r$ counting the species multiplicities as inputs and outputs of the
reaction are called, respectively, the \emph{input} and \emph{output complexes} of $r$. We finally denote by
$\C := \{c_\#^r~:~r \in \R\,,\,\# \in \{``\inn",\,``\outt"\}\}$
the \emph{set of complexes}, and for each $r$ we define a \emph{reaction vector}
\begin{equ}
 c^r := c_\outt^r - c_\inn^r~, \label{e:cr}
\end{equ}
describing the net effect of $r$ on the system. A \abbr{crn} is defined by the triple
\net.

\rmk{We extend the definitions to include open chemical networks, by
 associating with sinks and external nonautocatalytic sources the complex zero vector (with $d$
 components), denoted by $\emptyset$\,.}

\begin{exxx}
 The system
 \begin{equ}
  \Ctwo  \mathrel{\mathop{\rightleftharpoons}^{\mathrm{r_1}}_{\mathrm{r_2}}} \Cthree
  \label{e:ex1}
 \end{equ}
 is a \abbr{crn} with $\S = \{A,B\}$ and $\R =
 \{r_1,r_2\}$\,. The set of complexes of this reaction is $\C =
 \{\{\Ctwo \},\{\Cthree \}\} = \{\cornertwo ,\cornerthree \}$ (in the basis spanned by $(A,B)$).
 \label{ex:1}
\end{exxx}

\subsection{Deterministic mass action kinetics}

The state of the system is described by the \emx{vector of
 concentrations}\index{concentration} $x \in \RR^d$ (the set of $d$-dimensional nonnegative reals) of the $d$ species. Its evolution is
commonly described by the
\emx{mass action
 kinetics} model, which is the set of \abbr{ODE}s
\begin{equ}
 \frac{\d x}{\dt} = \sum_{r \in \mathcal R} \lambda_r(x) c^r~,
 \label{e:ma}
\end{equ}
where $\lambda $ is the monomial
\begin{equ}
 \lambda_r(x) := k_r \prod_{i = 1}^d x_i^{(c^r_{\rm in})_i}
 \label{e:rates}
\end{equ} describing the  \emx{reaction rate} of reaction $r$\,.
The constants $\{k_r\} \in (0,\infty)$ are called \emx{reaction
rate constants}.

\rmk{Note that the set $\RR^d$ is invariant under the dynamics described by \eref{e:ma}: For every $s_i \in \S$,
 any reaction $r \in \R$ reducing the amount of $s_i$ in the reactor will have $(c_\inn^r)_i > 0$, implying by \eref{e:rates} that $\lambda_r(x) = 0$ on
 $\{x \in \RR^d~:~x_i = 0\}$. \\
 Furthermore, again by \eref{e:ma}, for each initial condition $x_0 \in \RR^d$, $x(t)$ remains in $S_{x_0} := (x_0 + \text{span} \{c^r~:~r\in\R\}) \cap \RR^d$\,. This space is called the \emx{stoichiometric
  compatibility class} of $x_0$ and can be less than $\RR^d$\,.
 \label{r:RRisinvariant}}

\addtocounter{exxx}{-1}

\begin{exxx}[continued] The time derivative of a trajectory $x(t)$ for Network \eref{e:ex1} is given by
 $$\frac{\d x}{\dt} = k_1 \lambda_{r_1}(x)\, c^{r_1} + k_2 \lambda_{r_2}(x)\,c^{r_2}  =  k_1  x_A x_B^2 \begin{pmatrix}-1\\1\end{pmatrix} + k_2
 x_B^3
 \begin{pmatrix}1\\-1\end{pmatrix}~,
 $$
 for reaction rate constants $k_1,k_2 \in (0, \infty)$. A trajectory
 starting at $x_0 \in \RR^2$ cannot leave the stoichiometric compatibility
 class
 $$S_{x_0} = \left\{x_0 +\text{\rm span}\begin{pmatrix}-1\\1\end{pmatrix}\right \}\cap
 \RR^2~.
 $$
\end{exxx}

\begin{exxx}{\label{ex:ex2} Consider the (open) \abbr{crn}
  \begin{equ}
   \emptyset~\mathop{\rightharpoonup}^{\mathrm{r_1}}~\Ctwo ~\mathop{\rightharpoonup}^{\mathrm{r_2}}~\Cthree ~\mathop{\rightharpoonup}^{\mathrm{r_3}}~A~.
   \label{e:ex2}
  \end{equ}
  The dynamics of this \abbr{CRN} is described by the solution $x(t)$ of the equation
  \begin{equ}
   \frac{\d x}{\dt} = k_1 \begin{pmatrix}1\\2\end{pmatrix} + k_2  x_A x_B^2 \begin{pmatrix}-1\\1\end{pmatrix} + k_3  x_B^3 \begin{pmatrix}1\\-3\end{pmatrix}~,
   \label{e:vfex1}
  \end{equ}
  where $x_A$ and $x_B$ are the concentrations of $A$ and $B$ respectively. Fig.~\ref{f:vfex1} represents the vector field of
  \eref{e:vfex1}.
  In this case the stoichiometric compatibility class of the network is
  $S_{x_0} = \RR^2$\, for all $x_0 \in \RR^2$\,.
  \begin{figure}[t]
   \centering
   \includegraphics[scale=0.25]{\figuresfolder/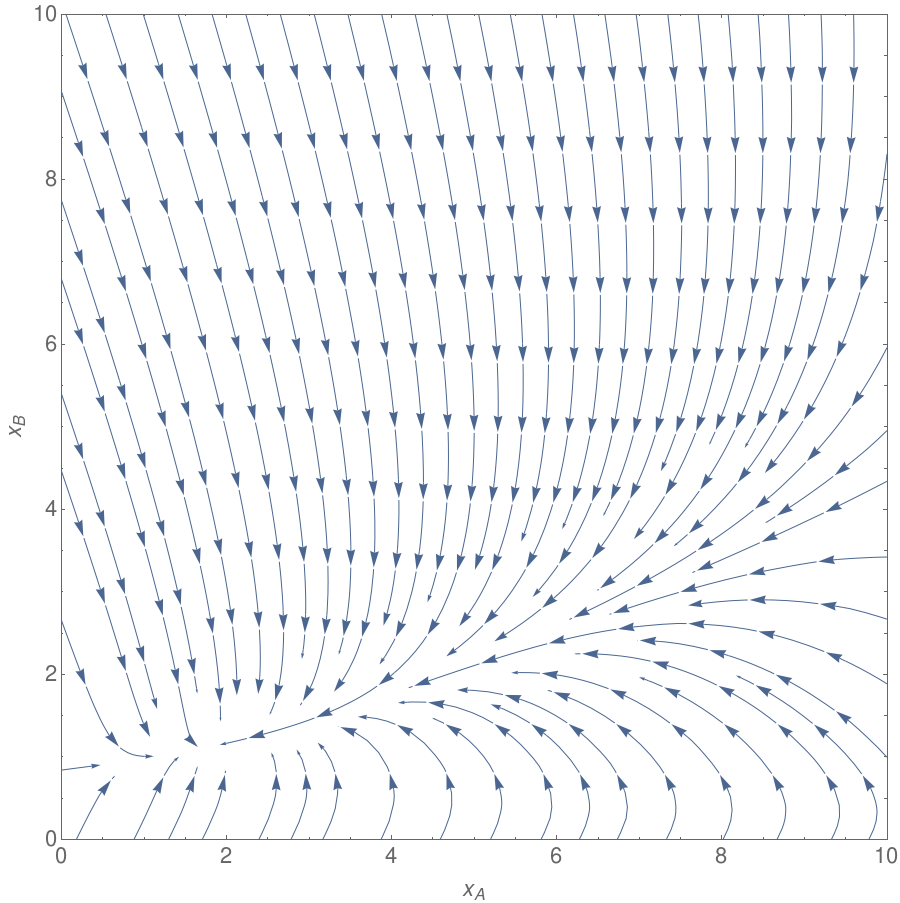}
   \caption{Vector field of \eref{e:vfex1} with $k_1 = k_2 = k_3 = 1$\,.}
   \label{f:vfex1}
  \end{figure}
  \label{ex:2}}
\end{exxx}

\setcounter{theorem}{3}

\subsection{Stochastic mass action kinetics}

We now replace the \abbr{ODE} by a stochastic process, which appears when one
takes into account that chemical reactions are formed by a discrete set
of molecules. We are interested in the approximation of perfect
mixing, \ie we only consider numbers of molecules, but not their
spatial distribution.
This is then a mean field pure jump Markov process \cite{either86}
whose central object is the random vector $N_t \in \mathbb N_0^d$ (the set of $d$-dimensional nonnegative integers), representing the number of molecules of the $d$ species at time $t$\,.
Every reaction $r \in \R$ describes a possible jump of the process $N_t$ as a transition $$N_t \rightarrow N_t + c^r~,$$ where $c^r$ is the \emx{reaction vector}
associated with $r \in \R$ as defined in \eref{e:cr}.

The state-dependent mean field rate $ \Lambda_r(N_t)$ of jumps of the process $N_t$ in direction $c^r$ under
mass action kinetics is then proportional to the number of unordered configurations of the (distinguishable) molecules and is modeled as
\begin{equ}
 \Lambda_r(N_t) = k_r \prod_{i = 1}^d \binom{(N_t)_i }{ (c^r_{\rm in})_i} (c^r_{\rm in})_i! ~,
 \label{e:ladef}
\end{equ}
where $k_r$ is the reaction rate constant of reaction $r$.
The generator $\mathcal L$ of the process is
\begin{equ}
 \mathcal Lf(N_t) = \sum_{r \in \mathcal R}{\Lambda_r( N_t)} \left(f( N_t + c^r)-f(N_t)\right)~.
 \label{e:Lmarkov_0}
\end{equ}

We are interested in the scaling of the dynamics of the system for large volumes, \ie as we increase the number of particles by a large multiplicative factor.
We denote by $v$ the volume of the system under consideration and we define the \emx{vector of concentrations} $X_t$ as the linear scaling of $N_t$ in $v$, \ie
\begin{equ}
 X_t^v := v^{-1}{N_t}~.
 \label{e:scaling}
\end{equ}
The natural scaling in volume $v$ of the reaction rate constants is
\begin{equ}
 k_r^{(v)} := {v^{-\|c^r_{\rm in}\|_1}}k_r~,
 \label{e:kscaling}
\end{equ}
where $\|\cdot\|_1$ is the $1$-norm in $\mathbb R^d$\,.
This results in  extensive laws for the reaction rates, appearing in the generator of the scaled process $X_t^v$:
\begin{equ}
 \mathcal L_v f(x) := v \sum_{r \in \mathcal R}{\Lambda_{r,v}(x)} \left(f(x + v^{-1}c^r)-f(x)\right)~, \qquad x \in (v^{-1} \mathbb N_0)^d ~,
 \label{e:Lmarkov_1}
\end{equ}
where we define
\begin{equ}
 \Lambda_{r,v}(x) =
 k_r^{(v)} \prod_{i = 1}^d
 \binom{v x_i}{(c^r_{\rm in})_i} (c^r_{\rm in})_i! = \lambda_r(x) + o_v(1) \qquad \text{as}~v\to\infty~.
 \label{e:jrates}
\end{equ}



For a fixed volume $v$ it follows that
the phase space of $X_t^v$ is $(v^{-1}\mathbb N_0)^d$.
In general,
this process is not irreducible, as the set of possible transitions of
$X_t^v$ might not reach every point in phase space (\eg a
reaction might map even numbers of species only on even numbers).

It is well known \cite{either86} that for any time $T > 0$, in the
limit $v \rightarrow \infty$ the sample paths of the process $X_t^v$
with initial condition $\lim_{v \to \infty}X_0^v = x_0 \in \RR^d$
converge almost surely on $[0,T]$ towards the solution of the \abbr{ode}s \eref{e:ma} starting at $x_0$\, provided that such a solution exists on $[0,T]$. This extends
to strong convergence through the
establishment of a Central Limit Theorem for the trajectories of
$X_t^v$ \cite{either86}\,.


\section{Large deviations theory}

We now study the $v$-approximation in the context of large
deviations theory to estimate
the probabilities of finding paths which deviate from the deterministic model
\cite{LDT1,LDT2}. This will allow to establish exponential tail
estimates on the probability measure in the space of paths. In our
case, by the jumping nature of the process $X_t^v$, this space is the
Skorokhod space, \ie the space of functions that are everywhere right
continuous and having left limits (also called \emph{c\`adl\`ag}
functions). Throughout, we denote by $D_{0,T}(\RR^d)$ the Skorokhod
space, or space of \emph{c\`adl\`ag} functions $z~:~[0,T] \to \RR^d$,
equipped with the topology of uniform convergence.
In this space, the typical statement of large deviations theory is summarized in the
following definition from \cite{LDT1}.

\dfn{Fix $T$ finite and a lower semi-continuous mapping $I:D_{0,T}(\RR^d) \to [0,\infty]$
 such that for any $\alpha \in \RR$,
 the level set $\{z
 :~I(z) \leq \alpha\}$
 is a compact subset of $D_{0,T}(\RR^d)$.
 The probability distribution of sample paths of the process $\{X_t^v\}_{v \in \mathbb N}$ with fixed initial condition $X_0^v \to x \in \RR^d$
 obeys a \abbr{LDP} with good rate function $I(\cdot)$ if for any measurable $\Gamma \subset D_{0,T}(\RR^d)$ we have
 \begin{equs}
  - \inf_{z \in \Gamma^o} I(z) &\leq \liminf_{v \to \infty} \frac{1}{v} \log \px{X_0^v}{X_t^v \in \Gamma}\\& \leq \limsup_{v \to \infty} \frac{1}{v} \log \px{X_0^v}{X_t^v \in \Gamma} \leq - \inf_{z \in \bar \Gamma} I(z)~,    \label{e:ldp}
 \end{equs}
 where $\Gamma^o, \bar \Gamma$ denote the interior and the closure of the set $\Gamma$ respectively. \label{d:ldp}}

In the case of pure jump Markov processes, it is well known \cite{LDT2} that a candidate for a good rate function can be obtained by analogy with the theory of Lagrangian mechanics.
Indeed, by defining the Lagrangian function
\begin{equ}
 L(\lambda,\xi) =
 \sup_{\theta \in \Rr^d} \pg{\langle \theta, \xi \rangle - \sum_{r \in \R} \lambda_r \pq{\exp (\langle \theta, c^r \rangle) - 1} }~,
\end{equ}
one defines the rate function $I_{x_0,T}$ for fixed $x_0$, $T < \infty$ as the corresponding action along a path $z \in D_{0,T}(\RR^d)$, \ie
\begin{equ}
 I_{x_0,T}(z) = \begin{cases} \int_0^T L(\lambda(z), \dot z)\, \dt & \text{if } z \in AC_{0,T}(\RR^d),\,z(0) = x_0
 \\\infty & \text{otherwise}
 \end{cases}~,
 \label{e:grf}
\end{equ}
where $AC_{0,T}(\RR^d)$ is the set of absolutely continuous paths $z~:~[0,T] \to \RR^d$.
Continuing the analogy with Lagrangian mechanics, the \abbr{LDP} expressed in \eref{e:ldp} corresponds
to the minimum action principle, and more precisely with the path integral formulation of quantum mechanics
stated in rigorous probabilistic terms.



Standard large deviations tools allow us to deduce (see \cite{LDT2}) that the sample paths of process $X_t^v$ obey a \abbr{LDP} with the rate functions of \eref{e:grf} under the following
\begin{cnd}
 The rates $\lambda_r~:~\RR^d \to \Rr$ are, on $\RR^d$,
 \begin{myenum}
  \item[$(0)$] uniformly bounded away from $0$, and
  \item[$(\infty)$] uniformly Lipschitz continuous.
 \end{myenum}
 \label{a:lipschitz}
\end{cnd}

In the framework of stochastic mass action kinetics, \cndref{a:lipschitz} is in general not satisfied.
Indeed, neither the reaction rates $\Lambda_{r,v}$ are in general bounded away from zero (see~\rref{r:RRisinvariant})
nor are they uniformly Lipschitz continuous on $(v^{-1}\mathbb N_0)^d$, as can be verified by inspection of \eref{e:jrates}.

\cndref{a:lipschitz} $(0)$ guarantees the nondegeneracy of $X_t^v$ within $S_{x_0}$, whereas \cndref{a:lipschitz} $(\infty)$ is used in this framework to ensure
that the process leaves a certain, large, compact with a probability that is negligibly low at the exponential scales of our estimates.
While \cndref{a:lipschitz} fails for general \abbr{crn}s,
we show directly
that the \abbr{ldp} \eqref{e:ldp} holds for a large class of \abbr{crn}s,
which we will call \abbr{ASE}. To this end, we require the following
more technical definition.
\dfn{\label{d:exptight} The process $X_t^v$ satisfies an exponential compact containment condition if there is, for every \jp{$0 < \alpha$, $\gamma$, $T < \infty$}\,, a finite
 $\rho_{\alpha,\gamma, T}$, so that
 \begin{equ}
  \limsup_{v \rightarrow \infty}
  \frac{1}{v}\log\Big(
  {
   \sup_{\|x_0^v\|_1 \le \gamma}} \,
  \mathbb P_{x_0^v}\big[\sup_{t \in [0,T]} {\|X_t^v\|_1} >
  \rho_{\alpha,\gamma, T}
  \big]\Big) \leq - \alpha~.
  \label{e:exptight}
 \end{equ}}

\rmk{Even though \dref{d:exptight} does not directly guarantee that the process $X_t^v$ is exponentially tight in $D_{0,T}(\RR^d)$, such exponential tightness of $X_t^v$ can be proved
 by application of the Arzela-Ascoli theorem to
 the continuous process $\widetilde X_t^v$ obtained by linearly interpolating the sample paths of $X_t^v$ between its jumps. As $\widetilde X_t^v$ is exponentially equivalent \cite{LDT1} to $X_t^v$, the result carries over to the original process.}

As demonstrated in our next example,
\cndref{a:lipschitz} $(\infty)$ is not necessary for
\eqref{e:exptight} in the class of models under consideration.

\begin{exxx}{The \abbr{crn} $2A \rightleftharpoons \emptyset$
  does not satisfy \cndref{a:lipschitz} $(\infty)$
  but \eref{e:exptight}, still holds: setting \abbr{wlog} $k_1 = k_2 = 1$, using standard tail estimates on Poisson distribution, and denoting by $R(\Delta t)$ the number of jumps of a unit rate Poisson process in a time interval $\Delta t$, we obtain
  \begin{equs}
   \px{x_0^v}{\sup_{t\in [0,T]} \|X_t^v\|_1 >
    {\|x_0^v\|_1+2}
    \rho} & \leq { \p {R(vT) > v \rho} \approx}
   \frac{e^{-vT}\pc{evT}^{v \rho}}{(v \rho)^{v \rho}}
   \\
   & = \exp\pq{-v \pc{T + \rho \log (\rho/(e T))}}\,,
  \end{equs}
  hence \eref{e:exptight} holds for
  $\rho_{\alpha,\gamma, T} = {\gamma + 2} T e^{(\alpha/T)+1}$\,.}
\end{exxx}

There are of course \abbr{crn}s which do not even satisfy \eref{e:exptight}.
\exm{The \abbr{crn} $2A \rightharpoonup 3A$ clearly does not satisfy condition \eref{e:exptight}. Indeed, setting \abbr{wlog} the reaction rate constant $k = 1$, the solution
 of the postasymptotic \abbr{ode} $\dot x_A = x_A^2$ diverges at $T_\infty(x_0) < \infty$. Hence, for any $\rho$ the \abbr{lhs} of \eref{e:exptight} goes to $0$ for $T \geq T_\infty(x_0)$.}

\noindent Working via the compact containment of \dref{d:exptight},
we show in \cite{agazzi17} that \cndref{a:lipschitz} can be relaxed,
so the \abbr{LDP} \eref{e:ldp} holds under the following assumption about
the existence of a suitable Lyapunov function \jp{$U(\cdot) > 0$}.
\begin{axx}[{\cite[Ass.~A.1]{agazzi17}}] Let $X_t^v$ be the solution of the martingale problem generated by $\Lv$ of \eref{e:Lmarkov_1}\,. We assume
 \begin{myenum}
  \item There exists a $b<\infty$ such that for all $\rho>0$ one can find
  a $v^*(\rho)$ such that for all $x $ with $\|x\|_1 < \rho$, the following holds for all
  $v>v^*$ when
  $x\in (v^{-1} \mathbb N_0)^d$:
  \begin{equ}
   (\Lv U^v) \, (x) \leq
   e^{b v}~,
  \end{equ}
  where $U^v$ represents the $v$-th power of $U$.
  \item  The Markov chain associated to $X_t^v$ reaches a state $x_+$ in the strictly positive orthant $(v^{-1} \mathbb N)^d$ with positive probability.
 \end{myenum}
 \label{a:1}
\end{axx}

\aref{a:1} is satisfied by the network in
\setcounter{exxx}{2}
\begin{exxx} Defining $U(x) := e^x$ we have that
 \begin{equs}
  \Lv U^v(x) & = v \Lambda_1^{(v)}(x) \pc{e^{v x - 2} - e^{v x} } + v \Lambda_2^{(v)}(x) \pc{e^{v x + 2} - e^{v x} }\\& = v e^{vx} ( vx (vx - 1) (e^{-2}-1) + (e^2-1))  < 0~,
 \end{equs}
 when $x> 4/v$, thereby proving that this system satisfies \aref{a:1} (a). \aref{a:1} (b) is also trivially satisfied by the existence of the reaction $\emptyset \rightharpoonup 2A$.
\end{exxx}
\setcounter{theorem}{7}

\begin{remark}
 \aref{a:1} implies that the \abbr{ode}s \eref{e:ma} have a global solution, as shown in \cite{agazzi17}.
\end{remark}
%

\section{Topological conditions}

In concrete applications, \eg in biochemistry where typically $d \sim
100$, establishing estimates such as \eref{e:exptight} can be
particularly challenging. For this reason, in \cite{agazzi17} a large
class of networks has been shown to automatically verify the
conditions in \aref{a:1}. We illustrate next the ideas behind these conditions.\\
By definition \eref{e:rates}, some of the jump rates of $X_t^v$ vanish
at the boundaries of phase space (\ie where the concentration of some
of the species vanishes). This implies that on those boundaries the
vanishing reactions will be canceled from the network \net, making the
dynamics there qualitatively different from the one in the bulk
$(\RR^d)^o$. To take this into account, the definitions introduced
next have boundary-specific character.\\

For the chemical \abbr{ODE}, an orbit cannot end on a
boundary in finite time if it has started in the interior of the
positive orthant of concentrations. But for the \emph{discrete}
particle approximation in finite volume, a species might disappear
with finite (albeit small) probability. If there is no reaction which
re-creates this species, the system gets stuck in there, and the \jp{\abbr{LDP}} fails in the sense
that ergodicity is broken. Therefore, for our results to hold, we need
criteria on chemical networks which guarantee that this ``sticking''
does not happen. As this sticking could happen also with several
species at once, we need to define an exit condition from general
subspaces (which we call $\PP$, below). It will be seen that these
exit conditions can be algorithmically verified.

We now proceed to define the necessary quantities for the study of configurations where some species are extinct. We classify $x \in (v^{-1} \mathbb N_0)^d$ depending on its support, \ie the set $\PP$ of species that are present in strictly positive concentrations. Fixing $\PP \subseteq \S$ with cardinality $d_\PP := |\PP|$, we denote by $\pi_\PP: \Rr^d \to \Rr^{d_\PP}$ the projection onto the coordinates with indices in $\PP$. For any $w \in \Rr^{d_\PP}$ and any set $\mathcal A \subset \Rr^{d_\PP}$, we define the set $\mathcal A_w$ as the set of points in $\mathcal A$ that maximize the scalar product with $w$. We also denote by $\R(\PP)$ the set of reactions with inputs in $\PP$ (\ie $\supp\, c_\inn^r \subseteq \PP$). For all $w\in\Rr^d$ with non-zero projection $w_\PP := {\pi_\PP} w$,
we then let $\R(\PP)_w$ denote the reactions in $\R(\PP)$ maximizing the inner product $\dtp{w}{c^r_\inn}$, \ie those reactions corresponding to the elements of $\C_\inn(\PP)$ that are exposed by the vector $w$.
Clearly, $\R(\PP)_w$ depends only on $w_\PP$
which \abbr{wlog} is in the
$(d_\PP-1)$-dimensional unit sphere $S^{d_\PP-1}$. When $\PP=\S$ we will write $\R_w$
instead of $\R(\S)_w$.

\dfn{[\ad{adapting} {\cite[Def.~3.6]{agazzi17}}] For any \jpp{$\weight \in {\mathbb R_{>0}^{d}}$,} $w_\PP \in S^{d_\PP-1}$, \jpp{we define} $\crwi \jpp{:= c^r_i\,\weight_i}$ and say that a reaction $r \in \R(\PP)$ is:
 \begin{myitem}
  \item  \emph{$\jpp{(w,\weight)}$-dissipative} if $\supp\{ c^r_{\rm out} \} \not \subseteq \PP$ or if $\langle w_\PP, {\pi_\PP} \crw \rangle < 0$\,,
  \item \emph{$\jpp{(w,\weight)}$-null} if $\supp\{ c^r_{\rm out} \} \subseteq \PP$ and $\langle w_\PP, {\pi_\PP} \crw \rangle = 0$\,,
  \item \emph{$\jpp{(w,\weight)}$-explosive} if $\supp\{ c^r_{\rm out} \} \subseteq \PP$ and $\langle w_\PP, {\pi_\PP} \crw \rangle > 0$\,.
\end{myitem}
\label{d:dissipative}}

\begin{df}[\ad{extending} {\cite[Def. 3.8]{agazzi17}}]
 \jpp{For any $\weight \in \mathbb R_{>0}^{d}$ with $\|\weight\|_1 = d$}, a \abbr{crn} $(\S,\C,\R(\PP))$ is called
 \emph{strongly \jpp{$(\PP,\weight)$}-endotactic} if
 the set $\R(\PP)_{w}$ contains at least one $\jpp{(w,\weight)}$-dissipative reaction,
 and no $\jpp{(w,\weight)}$-explosive reaction for any $w \in \Rr^d$ with non-zero projection onto $\PP$ (or $w_\PP \in S^{d_\PP-1}$).\\
 Finally, we say that a network is \emph{strongly endotactic} if \jpp{there exists $\weight \in \mathbb R_{>0}^d$ such that it is strongly $(\S,\weight)$-endotactic.}
 \label{d:Pendo1}
\end{df}

\begin{remark}
 \jpp{We denote throughout by $\onevec \in \Rr^d$ the vector with $\onevec_i = 1$ for all $i \in \S$.} For $\PP=\S$ \jpp{and $\weight = \onevec$ our \dref{d:Pendo1} of strongly \jpp{$(\PP,\weight)$}-endotactic \abbr{crn} and} our \dref{d:dissipative} of $\jpp{(w,\weight)}$-dissipative and $\jpp{(w,\weight)}$-explosive
 reactions coincide with \jpp{\cite[Def. 3.8]{agazzi17} of strongly endotactic \abbr{crn} and} \cite[Def. 6.15]{gopal13} of
 $w$-sustaining and $w$-draining reactions, respectively.
 The nomenclature was changed \jpp{to generalize the definition in the first case and} to stress the behavior of reactions
 for $\|x\|_1 \gg 1$ in the second. Indeed, as we will see below, dissipative [explosive] reactions contribute to the decrease [increase] of a certain Lyapunov function \jpp{$\uu(x)$} along trajectories far away from the origin.
\end{remark}

\begin{lm}[{\cite[Lemma 3.9]{agazzi17}}]
 \jpp{For any $\weight \in \mathbb R_{>0}^d$,} if \net is strongly \jpp{$(\S,\weight)$}-endotactic then for all $\PP \subset \S$ with $\R(\PP) \neq \emptyset$, $(\S,\C,\R(\PP))$ is strongly {$(\PP,\weight)$}-endotactic.
 \label{l:endotacticorpositive}
\end{lm}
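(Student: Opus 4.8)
\emph{The plan.} Fix $\PP\subseteq\S$ with $\R(\PP)\neq\emptyset$ and an arbitrary direction $w_\PP\in S^{d_\PP-1}$; I must produce in $\R(\PP)_w$ at least one $(w,\weight)$-dissipative reaction and rule out any $(w,\weight)$-explosive one. The idea is to \emph{lift} $w_\PP$ to a full-space direction and then invoke the strong $(\S,\weight)$-endotacticity hypothesis there. Concretely, for $M>0$ let $w^{(M)}\in\Rr^d$ be defined by $\pi_\PP w^{(M)}=w_\PP$ and $(w^{(M)})_i=-M$ for $i\in\S\setminus\PP$. The first step is to check that for all $M$ large enough one has $\R_{w^{(M)}}=\R(\PP)_w$. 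Indeed, $\supp\,c^r_\inn\subseteq\PP$ exactly when $r\in\R(\PP)$, so $\dtp{w^{(M)}}{c^r_\inn}=\dtp{w_\PP}{\pi_\PP c^r_\inn}$ for $r\in\R(\PP)$, independently of $M$, whereas for $r\notin\R(\PP)$ the quantity $\dtp{w^{(M)}}{c^r_\inn}$ carries a term $-M\sum_{i\in\S\setminus\PP}(c^r_\inn)_i$ with $\sum_{i\in\S\setminus\PP}(c^r_\inn)_i\ge 1$, hence tends to $-\infty$. Since $\R$ is finite and $\R(\PP)\neq\emptyset$, for large $M$ the maximizers of $\dtp{w^{(M)}}{c^r_\inn}$ over $\R$ are precisely the maximizers of $\dtp{w_\PP}{\pi_\PP c^r_\inn}$ over $\R(\PP)$, i.e.\ $\R(\PP)_w$.

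\emph{Key computation.} Next I would split the weighted inner product along $\PP$ and its complement: for every $r\in\R(\PP)$,
\begin{equ}
 \dtp{w^{(M)}}{\crw} = \dtp{w_\PP}{\pi_\PP \crw} \;-\; M\!\!\sum_{i\in\S\setminus\PP}(c^r_\outt)_i\,\weight_i ,
\end{equ}
where only the output contributes off $\PP$, the input $c^r_\inn$ having no mass there. The off-$\PP$ sum is nonnegative and vanishes precisely when $\supp\{c^r_\outt\}\subseteq\PP$. Thus for $r\in\R(\PP)$ with $\supp\{c^r_\outt\}\subseteq\PP$ one gets the exact identity $\dtp{w^{(M)}}{\crw}=\dtp{w_\PP}{\pi_\PP \crw}$ for all $M$, while if $\supp\{c^r_\outt\}\not\subseteq\PP$ then $\dtp{w^{(M)}}{\crw}\to-\infty$.

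\emph{Transferring the two conditions.} Applying \dref{d:Pendo1} with $\PP=\S$ to the nonzero direction $w^{(M)}$, recall that for $\PP=\S$ the support clause of \dref{d:dissipative} is vacuous, so there $(w^{(M)},\weight)$-dissipative [explosive] means simply $\dtp{w^{(M)}}{\crw}<0$ [$>0$]. For the \emph{no-explosive} requirement: if some $r\in\R(\PP)_w=\R_{w^{(M)}}$ were $(w,\weight)$-explosive in the $\PP$ sense, then $\supp\{c^r_\outt\}\subseteq\PP$ and $\dtp{w_\PP}{\pi_\PP\crw}>0$, whence by the exact identity $\dtp{w^{(M)}}{\crw}>0$, making $r$ a $(w^{(M)},\weight)$-explosive element of $\R_{w^{(M)}}$ and contradicting the hypothesis. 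For the \emph{dissipative} requirement: the hypothesis furnishes some $r^*\in\R_{w^{(M)}}=\R(\PP)_w$ with $\dtp{w^{(M)}}{c^{r^*,\weight}}<0$. If $\supp\{c^{r^*}_\outt\}\not\subseteq\PP$ then $r^*$ is $(w,\weight)$-dissipative in the $\PP$ sense by the support clause; otherwise the identity yields $\dtp{w_\PP}{\pi_\PP c^{r^*,\weight}}<0$, again making $r^*$ dissipative. Either way $\R(\PP)_w$ contains a dissipative reaction. As $w_\PP\in S^{d_\PP-1}$ was arbitrary, this establishes the lemma (the case $\PP=\S$ is trivial, with $w^{(M)}=w$).

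\emph{Main obstacle.} The only point requiring care is the uniformity of the threshold on $M$: one single $M$ must simultaneously push \emph{all} reactions outside $\R(\PP)$ below the $\R(\PP)$-maximum of $\dtp{\cdot}{c^r_\inn}$, so that $\R_{w^{(M)}}=\R(\PP)_w$ holds exactly; this is immediate from the finiteness of $\R$. The rest is bookkeeping between the support clause of \dref{d:dissipative} and the inner-product clause, and the observation that scale invariance of argmaxima and of the signs of the relevant inner products makes the non-normalization of $w^{(M)}$ harmless.
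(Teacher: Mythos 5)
Your proof is correct, and the lifting argument (extending $w_\PP$ by large negative components $-M$ off $\PP$ so that $\R_{w^{(M)}}=\R(\PP)_w$, then transferring the dissipative/explosive dichotomy via the identity $\dtp{w^{(M)}}{\crw}=\dtp{w_\PP}{\pi_\PP\crw}-M\sum_{i\notin\PP}(c^r_\outt)_i\weight_i$) is exactly the standard route taken for this statement in the cited reference \cite[Lemma~3.9]{agazzi17}; the present paper gives no proof of its own, deferring to that reference. Your handling of the two delicate points --- a uniform threshold on $M$ from finiteness of $\R$, and the harmlessness of $w^{(M)}$ not being normalized --- is what makes the argument complete.
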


At this point we can introduce a visual representation of \abbr{crn}s. A \abbr{crn} \net can be uniquely represented in $\mathbb N_0^d$ by drawing for each $r \in \R$ the vector $c^r$ starting at
$c_\inn^r$. We call this diagram the \emph{complex diagram} of \net and denote the convex hull
of the set $\C_{\inn} := \{c_\inn^r~:~r \in \R\}$ by the \emph{complex polytope} $\WW$ of the network.

\begin{remark}
\jpp{
Similarly to \cite{gopal13}, we observe that a \abbr{crn} is strongly endotactic if, for some $\weight \in \Rr_{>0}^d$, {the requirements of \dref{d:Pendo1}
hold} on all the faces of $\WW$. For each {such face $\F$
with normal $n_\F$} one can find the cone
 $\BB_\F \subset \Rr_{> 0}^d$ of all $\weight$'s {such that
$\langle n_\F,c^{r,\weight}\rangle < 0$ for all $r \in \R_{n_\F}$.
The \abbr{crn} is thus strongly $(\S,\weight)$-endotactic iff
$\weight \in \bigcap_{\{\F\}} \BB_\F =: \BB$, and
strongly endotactic iff $\BB \neq \emptyset$.}}
\end{remark}

\setcounter{exxx}{1}
\begin{exxx}[continued]
 \begin{figure}
  \centering
  \def\svgwidth{.43\textwidth}
  \begingroup%
  \makeatletter%
  \providecommand\color[2][]{%
    \errmessage{(Inkscape) Color is used for the text in Inkscape, but the package 'color.sty' is not loaded}%
    \renewcommand\color[2][]{}%
  }%
  \providecommand\transparent[1]{%
    \errmessage{(Inkscape) Transparency is used (non-zero) for the text in Inkscape, but the package 'transparent.sty' is not loaded}%
    \renewcommand\transparent[1]{}%
  }%
  \providecommand\rotatebox[2]{#2}%
  \ifx\svgwidth\undefined%
    \setlength{\unitlength}{261.75109863bp}%
    \ifx\svgscale\undefined%
      \relax%
    \else%
      \setlength{\unitlength}{\unitlength * \real{\svgscale}}%
    \fi%
  \else%
    \setlength{\unitlength}{\svgwidth}%
  \fi%
  \global\let\svgwidth\undefined%
  \global\let\svgscale\undefined%
  \makeatother%
  \begin{picture}(1,1.16564357)%
    \put(0,0){\includegraphics[width=\unitlength,page=1]{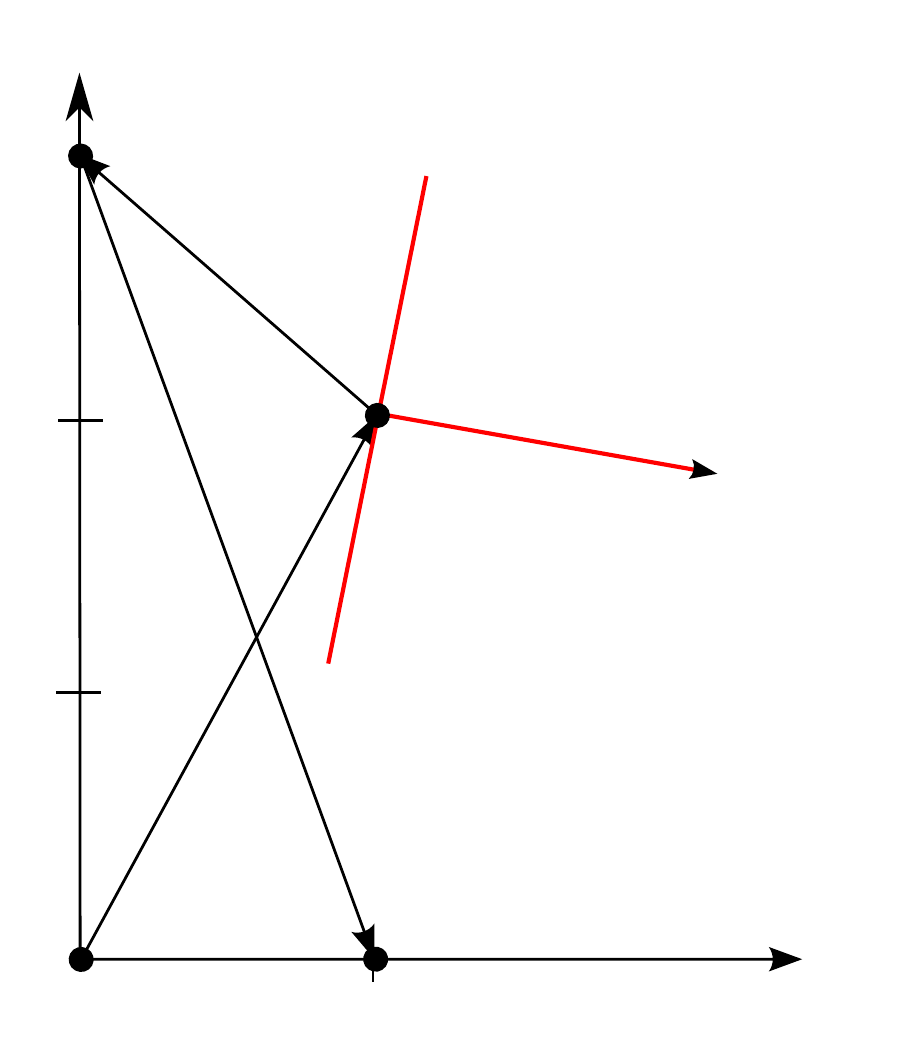}}%
    \put(0.12711614,0.9900558){\color[rgb]{0,0,0}\makebox(0,0)[lb]{\smash{$3B$}}}%
    \put(-0.0071036,1.10062474){\color[rgb]{0,0,0}\makebox(0,0)[lb]{\smash{$B$}}}%
    \put(0,0){\includegraphics[width=\unitlength,page=2]{complex_ex_swapred1.pdf}}%
    \put(0.44428534,0.74021397){\color[rgb]{0,0,0}\makebox(0,0)[lb]{\smash{$A + 2B$}}}%
    \put(0.66686468,0.57413762){\color[rgb]{0,0,0}\makebox(0,0)[lb]{\smash{$w$}}}%
    \put(0.83966758,0.0144651){\color[rgb]{0,0,0}\makebox(0,0)[lb]{\smash{$A$}}}%
    \put(0.42769341,0.12713839){\color[rgb]{0,0,0}\makebox(0,0)[lb]{\smash{$A$}}}%
    \put(0.00848286,0.01780078){\color[rgb]{0,0,0}\makebox(0,0)[lb]{\smash{$\emptyset$}}}%
    \put(0,0){\includegraphics[width=\unitlength,page=3]{complex_ex_swapred1.pdf}}%
  \end{picture}%
\endgroup%
  \caption{The complex diagram of network \ref{e:ex2}. The shaded region is the complex polytope of the network. A vector $w \in \Rr^d$ has been drawn in red, together with its normal hyperplane, in order to identify the $w$-maximal subset of the input complexes $\C_\inn(\S)_w$: The complex $\Ctwo $\,.}
  \label{f:ex2}
 \end{figure}
 Consider the reaction network \eref{e:ex2} described in \fref{f:ex2}. There are $3$
 faces:\\
 $$F_{13} := (\Cthree ,\Ctwo )~,\qquad F_{32} := (\Ctwo ,\emptyset)~,\qquad F_{12} := (\emptyset,\Cthree )~,$$ and
 $3$ reactions: $$r_1 := \{\Cthree \rightharpoonup A\}~, \qquad r_2 := \{\emptyset
 \rightharpoonup \Ctwo \}~, \qquad r_3 := \{\Ctwo  \rightharpoonup \Cthree \}~.$$
 One checks that for the normal $n_1$ to $F_{13}$,
 $\dtp{n_1}{ c^{r_1}}<0$, $\dtp{n_1}{ c^{r_3}}=0$ and $r_2$ is not
 considered because it does not originate in $F_{13}$\,.\\
 On the face $F_{32}$ the scalar products are $\dtp{n_2}{ c^{r_3}}<0$,
 $\dtp{n_2}{c^{r_2}}=0$, and  $r_1$ need not be considered.
 Finally, we have $\dtp{n_3 }{c^{r_1}}<0$,
 $\dtp{n_3}{c^{r_2}}<0$, and  $r_3$ is not considered.\\
 Thus, the system
 $\emptyset \rightharpoonup \Ctwo  \rightharpoonup \Cthree  \rightharpoonup A$
 is strongly endotactic \jpp{(for $\weight = \onevec$)}.
\end{exxx}

{
\noindent We next demonstrate} \jpp{that the class of networks in \dref{d:Pendo1}
is larger than the one introduced in \cite{gopal13}.
\begin{ex} \label{ex:ex1}
 Consider the \abbr{crn} defined by the reactions \begin{equ} \label{e:ex13} A \to \emptyset \to B \to 2A~.\end{equ} Its diagram, shown in \fref{f:diag1}, contains a reaction pointing outward of $\WW$. Therefore, the network does not satisfy the strongly $(\S,\onevec)$-endotactic property from \cite{agazzi17,gopal13}. However, this network does satisfy \dref{d:Pendo1} for $\weight = (1/2,1)$ and is therefore strongly endotactic.
 \begin{figure}
  \centering
   \def\svgwidth{.37\textwidth}
   \begingroup%
  \makeatletter%
  \providecommand\color[2][]{%
    \errmessage{(Inkscape) Color is used for the text in Inkscape, but the package 'color.sty' is not loaded}%
    \renewcommand\color[2][]{}%
  }%
  \providecommand\transparent[1]{%
    \errmessage{(Inkscape) Transparency is used (non-zero) for the text in Inkscape, but the package 'transparent.sty' is not loaded}%
    \renewcommand\transparent[1]{}%
  }%
  \providecommand\rotatebox[2]{#2}%
  \ifx\svgwidth\undefined%
    \setlength{\unitlength}{215.76516724bp}%
    \ifx\svgscale\undefined%
      \relax%
    \else%
      \setlength{\unitlength}{\unitlength * \real{\svgscale}}%
    \fi%
  \else%
    \setlength{\unitlength}{\svgwidth}%
  \fi%
  \global\let\svgwidth\undefined%
  \global\let\svgscale\undefined%
  \makeatother%
  \begin{picture}(1,0.53588278)%
    \put(0,0){\includegraphics[width=\unitlength,page=1]{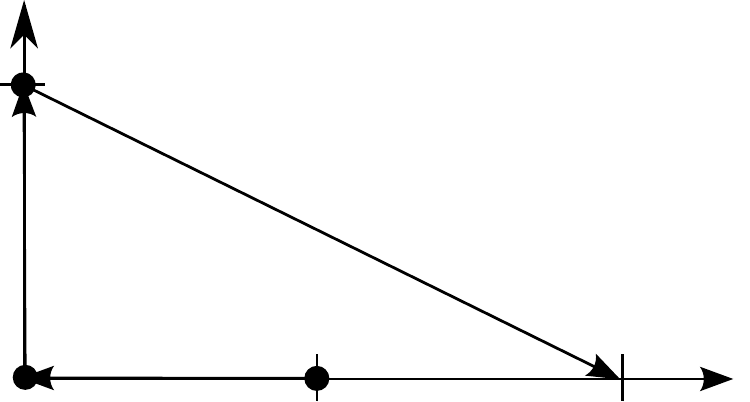}}%
    \put(0.0500676,0.48995914){\color[rgb]{0,0,0}\makebox(0,0)[lb]{\smash{$B$}}}%
    \put(0.88711822,0.05867122){\color[rgb]{0,0,0}\makebox(0,0)[lb]{\smash{$A$}}}%
    \put(0,0){\includegraphics[width=\unitlength,page=2]{diag1.pdf}}%
  \end{picture}%
\endgroup%
  \caption{\jpp{Reaction diagram of the \abbr{crn} \eref{e:ex13}. In this case there is a reaction vector pointing outside of the face with normal vector $(1,1)$. Therefore, this network is not strongly $(\S,\onevec)$-endotactic but is included in the more general \dref{d:Pendo1}.}}
\label{f:diag1}
 \end{figure}
 \end{ex}}

\setcounter{exxx}{3}
\begin{exxx}
 We give two examples of networks that do not meet {some of}
 the requirements of \dref{d:Pendo1}.
 \begin{figure}[h]
  \centering
  \begin{subfigure}[b]{0.31\textwidth}
   \centering
   \def\svgwidth{.9\textwidth}
   \begingroup%
  \makeatletter%
  \providecommand\color[2][]{%
    \errmessage{(Inkscape) Color is used for the text in Inkscape, but the package 'color.sty' is not loaded}%
    \renewcommand\color[2][]{}%
  }%
  \providecommand\transparent[1]{%
    \errmessage{(Inkscape) Transparency is used (non-zero) for the text in Inkscape, but the package 'transparent.sty' is not loaded}%
    \renewcommand\transparent[1]{}%
  }%
  \providecommand\rotatebox[2]{#2}%
  \ifx\svgwidth\undefined%
    \setlength{\unitlength}{266.44174805bp}%
    \ifx\svgscale\undefined%
      \relax%
    \else%
      \setlength{\unitlength}{\unitlength * \real{\svgscale}}%
    \fi%
  \else%
    \setlength{\unitlength}{\svgwidth}%
  \fi%
  \global\let\svgwidth\undefined%
  \global\let\svgscale\undefined%
  \makeatother%
  \begin{picture}(1,1.13153258)%
    \put(0,0){\includegraphics[width=\unitlength,page=1]{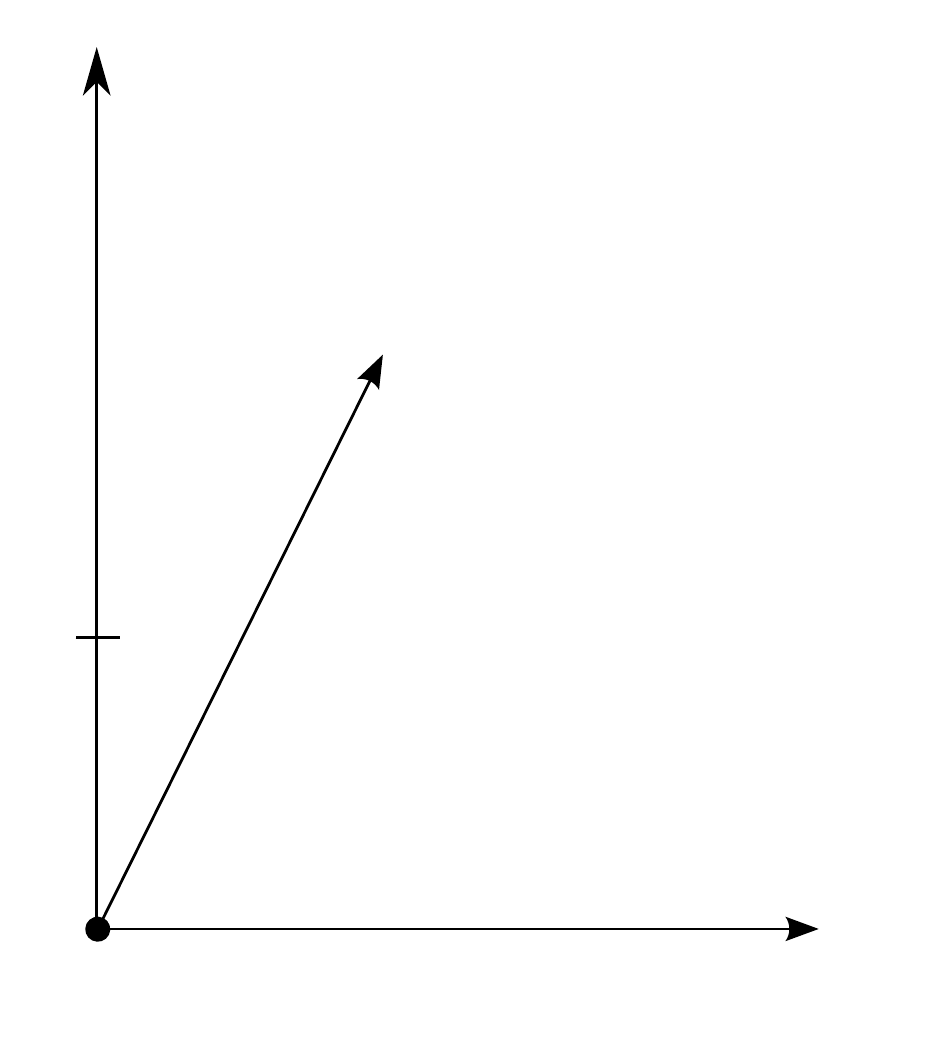}}%
    \put(0.8424902,0.01235612){\color[rgb]{0,0,0}\makebox(0,0)[lb]{\smash{$A$}}}%
    \put(-0.00697854,1.06765839){\color[rgb]{0,0,0}\makebox(0,0)[lb]{\smash{$B$}}}%
    \put(0,0){\includegraphics[width=\unitlength,page=2]{complex_ex11_more_shaded.pdf}}%
    \put(0.74772872,0.1535512){\color[rgb]{0,0,0}\makebox(0,0)[lb]{\smash{$2A$}}}%
    \put(0.42952868,1.01089522){\color[rgb]{0,0,0}\makebox(0,0)[lb]{\smash{$A+3B$}}}%
    \put(0,0){\includegraphics[width=\unitlength,page=3]{complex_ex11_more_shaded.pdf}}%
    \put(0.42327379,0.15120593){\color[rgb]{0,0,0}\makebox(0,0)[lb]{\smash{$A$}}}%
    \put(0.73200473,0.26677292){\color[rgb]{0,0,0}\makebox(0,0)[lt]{\begin{minipage}{0.25414291\unitlength}\raggedright \end{minipage}}}%
    \put(0.40718887,0.76833375){\color[rgb]{0,0,0}\makebox(0,0)[lb]{\smash{$A+2B$}}}%
    \put(0,0){\includegraphics[width=\unitlength,page=4]{complex_ex11_more_shaded.pdf}}%
  \end{picture}%
\endgroup%
  \end{subfigure}
  $\qquad$ 
  \begin{subfigure}[b]{0.57\textwidth}
   \centering
   \def\svgwidth{.9\textwidth}
   \begingroup%
     \makeatletter%
     \providecommand\color[2][]{%
       \errmessage{(Inkscape) Color is used for the text in Inkscape, but the package 'color.sty' is not loaded}%
       \renewcommand\color[2][]{}%
     }%
     \providecommand\transparent[1]{%
       \errmessage{(Inkscape) Transparency is used (non-zero) for the text in Inkscape, but the package 'transparent.sty' is not loaded}%
       \renewcommand\transparent[1]{}%
     }%
     \providecommand\rotatebox[2]{#2}%
     \ifx\svgwidth\undefined%
       \setlength{\unitlength}{380.32722168bp}%
       \ifx\svgscale\undefined%
         \relax%
       \else%
         \setlength{\unitlength}{\unitlength * \real{\svgscale}}%
       \fi%
     \else%
       \setlength{\unitlength}{\svgwidth}%
     \fi%
     \global\let\svgwidth\undefined%
     \global\let\svgscale\undefined%
     \makeatother%
     \begin{picture}(1,0.62072737)%
       \put(0,0){\includegraphics[width=\unitlength,page=1]{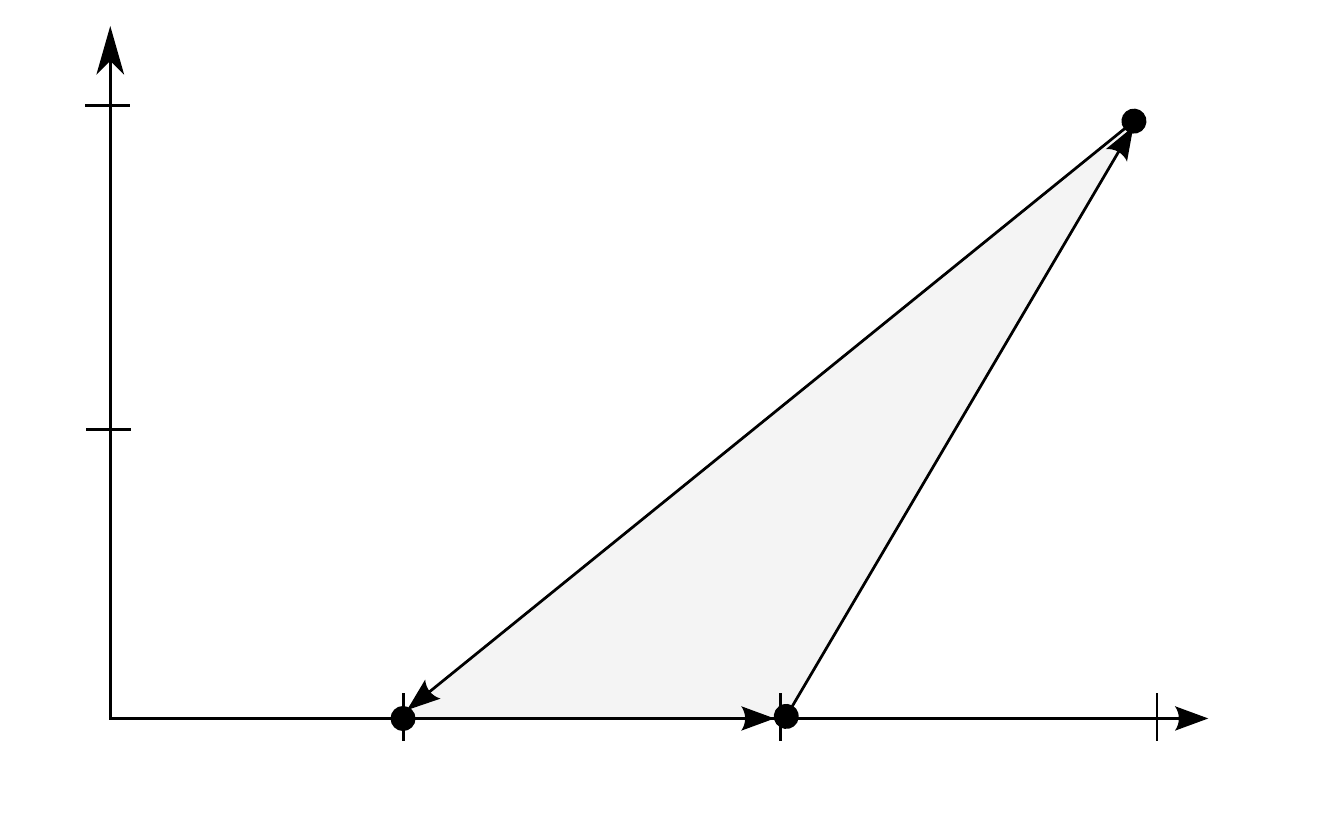}}%
       \put(0.62936611,0.55260684){\color[rgb]{0,0,0}\makebox(0,0)[lb]{\smash{$3A+2B$}}}%
       \put(0.64848937,0.11064697){\color[rgb]{0,0,0}\makebox(0,0)[lb]{\smash{$2A$}}}%
       \put(0.88965505,0.0086562){\color[rgb]{0,0,0}\makebox(0,0)[lb]{\smash{$A$}}}%
       \put(0.22777756,0.10639732){\color[rgb]{0,0,0}\makebox(0,0)[lb]{\smash{$A$}}}%
       \put(-0.00488888,0.57597972){\color[rgb]{0,0,0}\makebox(0,0)[lb]{\smash{$B$}}}%
     \end{picture}%
   \endgroup%
   \end{subfigure}
  \caption{Complex diagrams of the \abbr{crn}s in Example~\ref{e:ex3} (\eref{e:ex31} on the left and \eref{e:ex32} on the right).}
  \label{f:ex3}
 \end{figure}
 \begin{myenum}
  \item The network
  \begin{equ}
   \emptyset \rightharpoonup \Ctwo  \rightharpoonup \Cthree  \rightharpoonup A \quad 2A \rightleftharpoons A+ \Cthree ~,
   \label{e:ex31}
  \end{equ}
  represented in \fref{f:ex3} left, is not strongly endotactic. Indeed,
\jpp{for $n^\star=(1,1/3)$ the normal to the $(A+\Cthree ,2A)$ face,
the set $\R_{n^\star}$ consists of the reactions
$2A \rightleftharpoons A+ \Cthree$ for which
$\dtp{n^\star}{\crw} \jpp{ = \pm (\weight_1-\weight_2)}$.
So, either both reactions are $n^\star$-null,
or one of them is $n^\star$-explosive.}
  \item The network
  \begin{equ}
   A \rightharpoonup 2A \rightharpoonup 3A+2B  \rightharpoonup A
   \label{e:ex32}
  \end{equ}
  represented in \fref{f:ex3} right, is strongly endotactic. \\
  For $\PP = \{A\}$ $(\S,\C,\R(\PP))$ is strongly \jpp{$(\PP,\onevec)$}-endotactic: For $w_\PP = (1) \in \Rr^1$, the reaction $r_+ = \{2A \rightharpoonup 3A+\Cthree \} \in \R(\PP)_{w_\PP}$ is \jpp{$(w_\PP,\onevec)$}-dissipative
  (it has a positive component not in $\PP$) and for $w_\PP = (-1)$ the reaction $r_- = \{A \rightharpoonup 2A\} \in \R(\PP)_{w_\PP}$ is \jpp{$(w_\PP,\onevec)$}-dissipative ($\dtp{w_\PP}{\jp{c^{r_-}}} = -1$). \\
  For $\PP = \{B\}$
  the network is not strongly \jpp{$(\PP,\onevec)$}-endotactic, {but}
  $\R(\PP) = \emptyset$\,.\\
 \end{myenum}
 \label{e:ex3}
\end{exxx}
\dfn{\label{d:asiphonic}A {non-empty}
 subset {$\PP \subseteq \S$} is called a \emx{siphon} if every reaction
 $r \in \R$ with {at least one output from $\PP$ also has
  some input species from $\PP$.}\\
 Furthermore, we say that a \abbr{crn} \net with no siphons $\PP \subset \S$ is \emph{asiphonic}.}

\rmk{\dref{d:asiphonic} comes from the theory of Petri Nets \cite{petri62}, where it is used to characterize systems that can recover from the extinction of any of their components. This definition is equivalent to the one of \emph{exhaustive} networks presented in \cite{kammash14}\label{r:asiphonic}}

\exm{Consider the \abbr{crn}
 \begin{equ}
  A\ \mathop{\rightharpoonup}^{\mathrm{r_1}}\ 2A\ \mathop{\rightharpoonup}^{\mathrm{r_2}}\ \emptyset~.
 \end{equ}
 It is easy to see that this network is strongly endotactic but has a siphon. This possibly implies issues with the irreducibility of the process $X_t^v$ (seen as a Markov chain): When $X^v = 2/v$ there is a nonvanishing probability that the next jump will occur in direction $c^{r_2} = -2$, resulting in the invariant state $X^v = 0$\,.
}

\setcounter{exxx}{3}
\begin{exxx}[continued] Network \eref{e:ex31} is asiphonic: The reaction $\emptyset \to \Ctwo $ has no input from either $\PP = \{A\}, \{B\}$, preventing such spaces from being siphons. Network \eref{e:ex32}, on the other hand, has a siphon $\PP = \{A\}$: All reactions have $A$ as an input.
\end{exxx}

We make the following assumption on the topological structure of \abbr{CRN}s.
{We call \net\ an
 \emph{Asiphonic Strongly Endotactic} (\abbr{ASE}) network if it satisfies
 \begin{axx}[\ad{adapting} {\cite[Ass.~A.2]{agazzi17}}]
  The \abbr{CRN} \net has the properties:
  \begin{myenum}
   \item It is strongly endotactic, as in \dref{d:Pendo1},
   \item It is \emph{asiphonic}, \ie it has no siphon $\PP \subseteq\S$\,.
  \end{myenum}
  \label{a:ase}
 \end{axx}}

\begin{ex}\label{ex:tetrahedron} It is easy to verify that the network
 \begin{equ}
  \emptyset \rightleftharpoons A \rightharpoonup B \rightharpoonup C \rightharpoonup A
 \end{equ}
 is \abbr{ase}: The asiphonic property is verified by the subnetwork
 $\emptyset \rightharpoonup A \rightharpoonup B \rightharpoonup C$,
 while the fact that \net has a single strongly connected component directly implies that the network is strongly endotactic \cite{gopal13}.
\end{ex}

By \rref{r:asiphonic}, \abbr{ase} networks naturally satisfy \aref{a:1} (b). This class of networks automatically satisfies \aref{a:1} (a) as well.\\
This is formulated in our main result:

\trm{[Existence of a Lyapunov function, \ad{extending} {\cite[Proposition 1.12]{agazzi17}}]
 \jpp{For any $\weight \in \mathbb R_{>0}^d$ with $\|\weight\|_1 = d$, let}
 \begin{equ}
  \jpp{\uu_\weight(x)} :=  d + 1 + \sum_{i = 1}^d \weight_i {x}_i  (\log {x}_i - 1) \, : \RR^d \to \Rr_{\ge 1} \,.
  \label{e:lyapunov1}
 \end{equ}
 If the network is \abbr{ASE}, then for the generator $\Lv$ of \eref{e:Lmarkov_1} \jpp{and any vector $\weight$ from \dref{d:Pendo1}, }there exists a constant $b<\infty$ such that for all $\rho>0$ one can find
 a $v^*(\rho)$ such that for all $x $ with $\|x\|_1 < \rho$, we have for all
 $v>v^*$ when
 $x\in (v^{-1} \mathbb N_0)^d$:
 \begin{equ}
  (\Lv \jpp{\uu_\weight^v}) \, (x) \leq
  e^{b v}~.
  \label{e:expdrift2}
 \end{equ}
 \label{t:thm}
}
\noindent In other words, for \abbr{ASE} \abbr{crn}s, the operator $\Lv$ satisfies
\aref{a:1} (a) with \jpp{some $\uu_\weight(x)$} of the form \eref{e:lyapunov1}.  Note that
$\jpp{\uu_\weight(x)} \ge 1$ for all $x$.

In this paper, we provide {a} constructive proof of this result,
\jpp{denoting $\uu_\weight$ by $\uu$ whenever the choice of the vector
$\weight$ is clear from the context.}
To do so, we use a reparametrization of the phase space $\RR^d$ that was introduced in \cite{agazzi17,gopal13} allowing to treat large-$\|x\|_1$ asymptotics of our problem in a particularly natural way.
To introduce such a parametrization, we define \emph{toric rays}, using throughout $w \in \mathbb R^n$, $z \in (\RR^n)^o$ and $\theta\in \mathbb R_{> 1}$ and the operators
\begin{align*}
 \log(z)  & := (\log z_1, \dots , \log z_n) \in \mathbb R^n~,               \\
 z^w      & := (z_1^{w_1},\dots ,z_n^{w_n}) \in (\mathbb R_{+}^n)^o~,       \\
 \theta^w & := (\theta^{w_1},\dots, \theta^{w_n}) \in (\mathbb R_{+}^n)^o~.
\end{align*}
\dfn{[\cite{agazzi17,gopal13}] To each $w$ in the unit sphere $S^{n-1}$ we associate the $w$-\emx{toric ray}
 $$
 T^w = \bigcup_{\theta > 1}\,\theta^w \subset \RR^n \,.
 $$
 We also introduce the toric-ray parameters
 \begin{align}
  \theta(z) := \exp(\|\log(z)\|_2)\,, \qquad
  w(z) :  & =\frac{1}{\log \theta(z)} \log(z)\,, \nonumber \\
  \pc{\theta,w} : \pc{\RR^{n}}^o \setminus \{1\} \rightarrow  \Rr_{> 1} \times S^{n-1}~,
  \quad z & = \theta(z)^{w(z)}~\,.
  \label{dfn:topic-p}
 \end{align}
 \label{d:toric-ray}}
\begin{remark}
 The parametrization \eref{dfn:topic-p} does not cover the point $z = (1,\dots,1)$, but it is irrelevant for our asymptotic analysis.
\end{remark}

This parametrization allows to separate in a natural way regions of phase space where the behavior of the vector field \eref{e:ma} is, asymptotically in $\theta$, particularly uniform, as shown in \fref{f:backparam}. We use this decomposition of phase space to establish \eref{e:expdrift2} in each of these regions separately, for the Lyapunov function \jpp{$\uu_\weight(x)$} of \eqref{e:lyapunov1}.

\begin{figure}[t]
 \centering
 \includegraphics[scale=0.25]{\figuresfolder/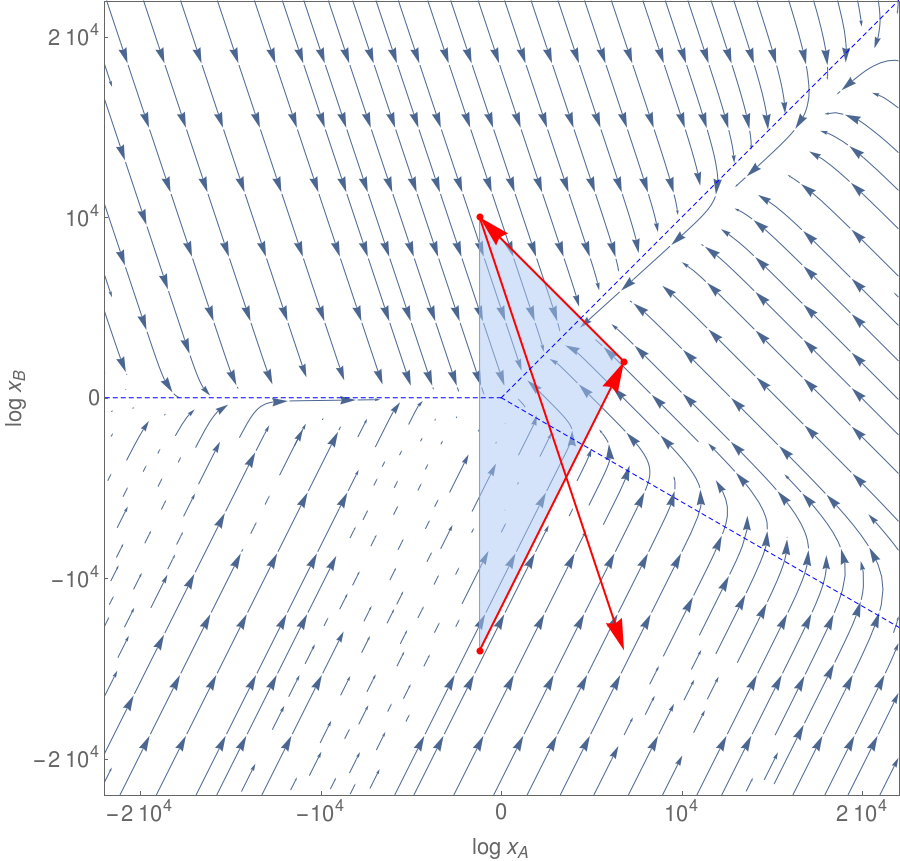}
 \caption{Dominant components of the vector field of \eref{e:vfex1} with $k_1 = k_2 = k_3 = 1$\, under reparametrization of the phase space as $(\RR^d)^o \to \mathbb R_{> 1} \times S^{d-1} \simeq \mathbb R^d\setminus \{1\}$ as in \eref{dfn:topic-p}. Note that the tangent bundle of the manifold has not been reparametrized.  The red vectors represent the reactions of the network \eref{e:ex2}, and the complex polytope is displayed in light blue. The normals to the edges of this polytope separate the space radially into dominance regions, where the direction of the vector field is asymptotically constant and corresponds to the direction of the dominant reaction.}
 \label{f:backparam}
\end{figure}

\label{r:explainTandU} We next discuss why this reparametrization and
the associated Lyapunov function \jpp{$\uu_\weight(\cdot)$} are useful.
Note that along a $w$-toric ray
\begin{equ}
 \jpp{\partial_i U_\weight(\theta^w) = \weight_i \log(\theta^{w_i}) =
 (\log \theta) w_i \weight_i }\,,
 \label{e:grad-U}
\end{equ}
while the derivative of the \abbr{ode} \eqref{e:ma} at a point on such a ray is
\begin{equ}
 \frac{\d x}{\dt} \Big|_{x = \theta^w} = \sum_{r \in \R}
 \lambda_r(x) c^r \Big|_{x = \theta^w} = \sum_{r \in \R} k_r
 \left(\theta^w\right)^{c^r_{\rm in}} c^r = \sum_{r \in \R} k_r \theta^{\langle w , c^r_{\rm in} \rangle} c^r~\,.
 \label{e:asymptotic}
\end{equ}
Thus, at $x = \theta^w$ the time derivative of \jpp{$\uu_\weight(\xt)$} for the solution $\xt $ of \eref{e:ma} is
\begin{equ}
 \frac{\d}{\d t}\jpp{\uu_\weight}(\xt )\Big|_{x = \theta^w} = \langle \nabla \jpp{\uu_\weight}(x) , \frac{\d x}{\d t} \rangle \Big|_{x = \theta^w}= (\log \theta) \sum_{r \in \R} k_r  \langle w , \crw \rangle \theta^{\langle w , c^r_\inn \rangle}~\,.
 \label{e:lyapunovdominance}
\end{equ}

\begin{figure}[t]
 \centering
 \includegraphics[scale=0.25]{\figuresfolder/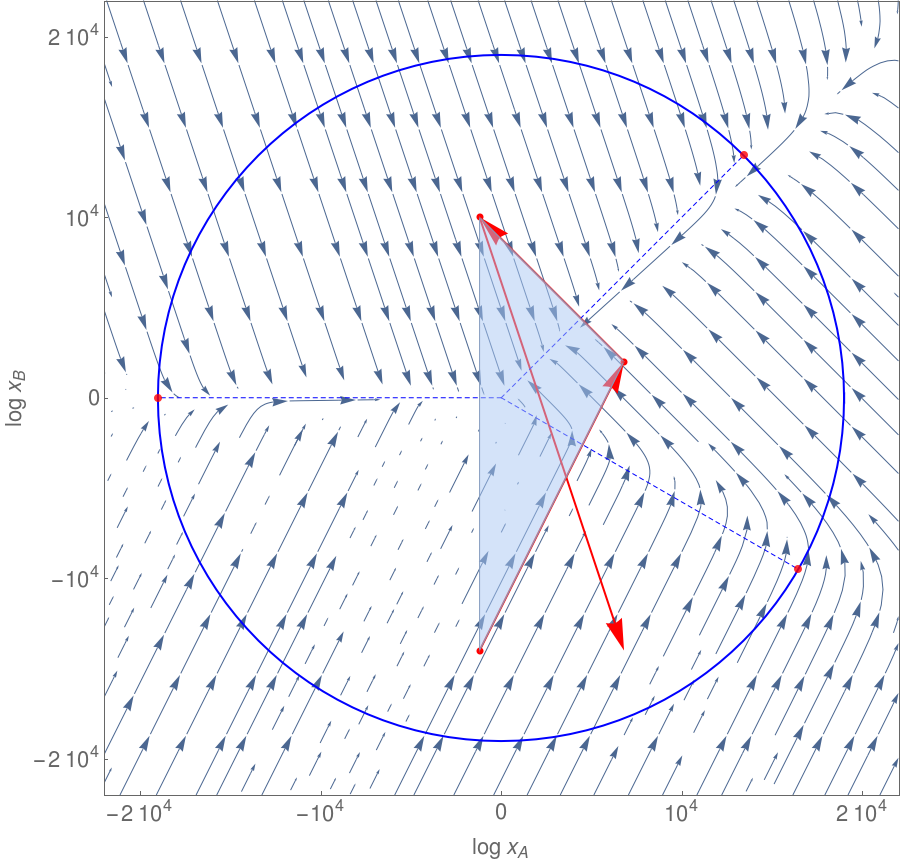}
 \caption{\jpp{When $\weight = \onevec$, }the negativity of \eref{e:lyapunovdominance} uniformly on $w \in S^{d-1}$ can be visualized as the condition for a large enough radius $\rho > 0$, the $d$-ball $\mathcal B_\rho(0)$ \jp{absorbs} the vector field of \eref{e:ma}  in the new parametrization. As an example, we show that this condition is satisfied by the vector field of \eref{e:vfex1} with $k_1 = k_2 = k_3 = 1$ for $\rho = 1.9 \times 10^4$. The complex polytope is also plotted, and different dominance regions are separated by the dashed lines.}
 \label{f:backparam2}
\end{figure}

\jpp{The choice of $\uu_\weight$ is natural
for a strongly $(\S,\weight)$-endotactic \abbr{crn}}
because of \eref{e:lyapunovdominance}. Indeed,
for fixed $w$ and $\theta \gg 1$ the sum on the \abbr{rhs} of
\eref{e:lyapunovdominance} is dominated by the set $\R_{w}$ of those
reactions $r$ which
maximize $\langle w , c^r_\inn \rangle$.
As is seen in \fref{f:backparam}, far away from the origin the vector
field is divided into cells with the flow lines parallel to each
other. There are, in this figure, $1$-dimensional cells (the isolated
lines) and $2$-dimensional cells (the 3 regions in the ``Mercedes''
star).
Each $2$-dimensional cell corresponds to a set of $w \in S^{d-1}$ for which
one\footnote{It can happen that several reactions maximize the scalar
product. This will be discussed later.}
\emph{fixed} reaction $r$ is dominating.
Furthermore, the direction of the vector field in that region \jp{(in the original parametrization)} is
given by the jump vector $c^r$ of that reaction.
In strongly \jpp{$(\S,\weight)$-endotactic} \abbr{crn}s, where at least one such reaction contributes negatively to this sum
by having $\langle w , \crw\rangle < 0$, and no other reaction $r$ in
$\R_w$ contributes positively to it, the \abbr{lhs} of
\eref{e:lyapunovdominance} will also be negative for all large enough $\theta$.

In \cite{gopal13}, this geometry was manifestly observed by the
authors. The difficulty is to show that the heuristic ideas apply
uniformly in $w$. As can be seen from \fref{f:backparam}, the
uniformity seems obvious in the interior of the $2$-dimensional regions,
while there are deviations from parallelity near the boundaries of
these regions. The question is then whether these curved parts of the
vector field
point in the right direction or not. We will show that this is
indeed the case (as shown in \fref{f:backparam2}), by giving a constructive proof. This hopefully
clarifies what is really going on in the logarithmic representation of
the chemical reaction.

\rmk{The logarithmic picture does not only apply in the regions where
 $w_i$ is positive, but also in the opposite case. This
 corresponds to approaching, as $\theta\to\infty$\,, a boundary of
 vanishing concentration (in the variable $x_i$). In those directions, the negativity of \eref{e:lyapunovdominance} establishes a (polynomial) lower bound on the rate of escape of the \abbr{ODE} solution from the boundary within compact sets \cite[Lemma 2.4]{agazzi17} provided that the solution does not blow up in finite time. The \abbr{ldp} then implies that the same estimate holds for $X_t^v$ starting at the boundary up to an exponentially decaying probability in $v$.}

\subsection{Proof of the main theorem (\tref{t:thm})}

Following the intuition developed in the previous section the proof is essentially a sequence of asymptotic estimates. They are obtained by covering \jp{the} phase space by open sets which allow to distinguish the different cells and their (carefully) chosen neighborhoods. There is a close relationship between our visualization of the problem and the notion of \emph{spherical image} of polytopes as developed by Alexandrov in \cite[§1.5]{alexandrov}.
This is a partition of the sphere generated by the set of vectors orthogonal to the polytope, forming cones of dimension $d-j$ on the $j$-dimensional faces of the polytope.
If $\WW$ is the complex polytope of the network \net,
we denote by $\WW_j$ the collection of
all its $j$-dimensional closed faces. For any $j < d$, the elements of $\WW_j$ are indexed by the set $\I_j$ and denoted by $\WW_{j,\ii}$ for $\ii \in \I_j$, so that $\WW_j=\cup_{\ii \in \I_j} \WW_{j,\ii}$.
We also use the notation $\partial \WW_{j,\ii}$
for the collection of $(j-1)$-dimensional faces of $\WW$, if any, constituting the boundary of $\WW_{j,\ii}$, and set $\I^*:=\{(j, \ii)~:~j \in (0, \dots d-1),~\ii \in\I_j \}$.

Throughout, for each $j$-dimensional face $\WW_{j,\ii}$ we denote by $\mathcal N(\WW_{j,\ii})$
the set of normals to the $(d-1)$-dimensional faces \jp{$\WW_{d-1,\kk'}$}
with $\WW_{j,\ii} \subseteq \jp{\WW_{d-1,\kk'}}$ and
define the dual $\WW_{j,\ii}^*$ as the $(d-j-1)$-dimensional intersection
\begin{equ}\label{e:Fs-def}
 \WW_{j,\ii}^* := \text{Co}(\mathcal N(\WW_{j,\ii})) \cap S^{d-1}~,
\end{equ}
where $\text{Co}(A)$ denotes the open conic hull of $A$.
This is precisely the set of $d$-dimensional
unit normal vectors exposing the facet $\WW_{j,\ii}$.
For any convex polytope $\WW$, the collection $\{\WW_{j,\ii}^*\}$ forms
a unique partition $\WW^*$ of $S^{d-1}$ called the spherical image
of $\WW$ (see \cite[§1.5]{alexandrov}). To summarize, we have
$$
\WW^* :=
\{\WW_j^*\}_{j \in \{0,\dots,d-1\}}\,, \qquad \text{with} \qquad  \WW_j^* := \{\WW_{j,\ii}^*\}_{\ii \in \I_j}~.
$$
\rmk{The spherical polyhedral complex $\WW^*$, used for the study of the asymptotic behavior of polynomials, is well known in the literature under different names. For instance, in tropical calculus it is referred to as the \emph{Bieri-Groves complex} \cite{bieri84} (or as \emph{Bergman fan} \cite{bergman71} for the cone over such a spherical complex), while in algebraic geometry this object arises from the intersection of the so-called \emph{normal fan} \cite{strumfels15,strumfels05} with the unit sphere.}

Using the following parameters
we next cover $S^{d-1}$ by suitable neighborhoods of the elements of $\WW^*$.
\begin{definition}\label{def:416} We
define, for each $j \in (0, \dots, d-1)$,
 \begin{equ}\label{e:choiceofdeltas}
  \begin{cases}\delta_j(\theta) &:= C_{2j}/\log \theta
     \\ \epsilon_j(\theta)& := C_{2j+1}/\log \theta
  \end{cases} \qquad \text{with} \qquad \begin{cases}C_{2j+1} &= K_0 e^{2 c_* C_{2j}}\\C_{2j+2}& = C_{2j+1} + 10^{-3} \end{cases}~,
 \end{equ}
 for some \jpp{finite $c_*$,}
  $C_0 > 0$, $K_0 \ge 1$ that are specified in the sequel (see
 \jpp{\eqref{e:cstar-def} and} \eqref{e:K0-choice}).
\end{definition}

\noindent
For $\mathcal A \subset S^{d-1}$ let $\mathcal A^\delta := \{w \in S^{d-1}~:~\inf_{v \in \mathcal A}{\|v-w\|_2} < \delta\}$ and for the positive parameters
of \dref{e:choiceofdeltas} we define
$(\WW_{j,\ii}^*)^{\epsilon_j, \delta_j} := (\WW_{j,\ii}^* \setminus (\partial \WW_{j,\ii}^*)^{\epsilon_j})^{\delta_j}$ for $j<d-1$ and
$(\WW_{j,\ii}^*)^{\epsilon_j, \delta_j} := (\WW_{j,\ii}^*)^{\delta_j}$
when $j=d-1$.
This induces the covering
\begin{equ} \label{e:covering} S^{d-1}=\bigcup_{j=0}^{d-1}\bigcup_{\ii \in \I_j}
 (\WW^*_{j,\ii})^{\epsilon_j, \delta_j} \,,
\end{equ}
where having $0 < \delta_j < \epsilon_j$ guarantees that
for $j<d-1$, each open set
$(\WW^*_{j,\ii})^{\epsilon_j, \delta_j}$
does \emph{not} intersect the boundary of $\WW^*_{j,\ii}$, see \fref{fig:partition}. Specifically,
through our choice of the $\theta$-dependent $\{\epsilon_j,\delta_j\}$
of \dref{def:416}, we cover $S^{d-1}$ by neighborhoods that approach,
asymptotically in $\theta$, the faces of $\WW^*$.

For any subset of non-extinct species
$\PP \subseteq \S$, the partitions
introduced above naturally
generalize to the $d_\PP$-dimensional complex polytope $\WW(\PP)$ of
the network
$(\S,\C,\R(\PP))$. In particular,
we write
$\WW(\PP) = \{\WW(\PP)_{j,\ii}~:~
(j,\ii) \in \I(\PP)^* \}$, where
$\Ip^* := \{(j, \ii)~:~ j \in (0, \dots, d_\PP-1), \ii \in \I(\PP)_j \}$ and $\I(\PP)_j$
indexes the $j$-dimensional faces of $\WW(\PP)$.
Moreover, we denote by $\WW(\PP)^* = \{\WW(\PP)^*_{j,\ii}\}
$
the spherical image
of $\WW(\PP)$
on $S^{d_\PP-1}$ and by $\{(\WW(\PP)_{j,\ii}^*)^{\epsilon_j,\delta_j}\}$ the corresponding covering of $S^{d_\PP-1}$.

\begin{figure}[t]
 \centering
 \includegraphics[width=.55 \textwidth]{\figuresfolder/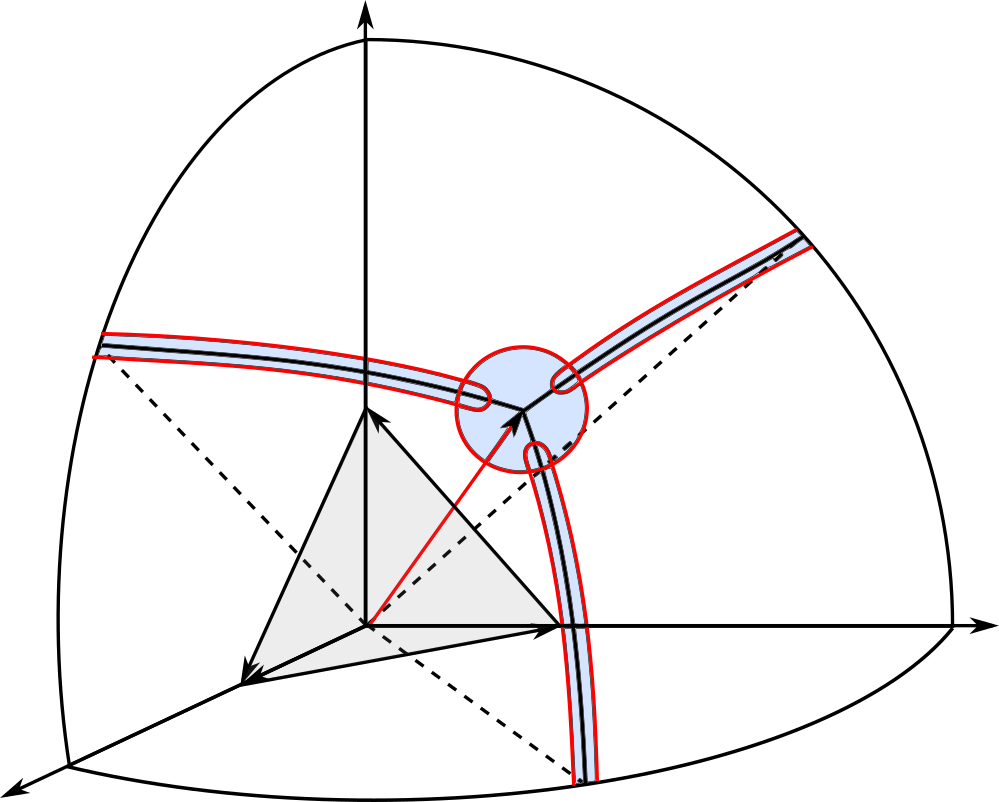}
 \caption{Covering of the positive orthant of $S^2$ by sets $\{\WWed\}_{j \in (0,1,2), \ii \in \I_j}$ for the network in Example~\ref{ex:tetrahedron}, whose complex polytope $\WW$ is drawn, rescaled, in grey. The only element of $\WW^*_2$ in this orthant is the vertex corresponding to the equilateral face of $\WW$, identified by the vector $n = (1,1,1)/\sqrt 3 \in S^2$
  drawn in red in the figure. The $\delta_2$ neighborhood of this
  point, $(\WW_{2,0}^*)^{\delta_2}$, is circled in red. The sets
  $\{(\WW_{1,\ii}^*)^{\epsilon_1,\delta_1}\}_{\ii \in \I_1 }$,
  containing the part of $\WW_{1,\ii}^*$ that has not been covered by
  $\{(\WW_{2,\ii}^*)^{0,\delta_2}\}_{\ii \in \I_2}$ are also shown in red. The sets
  defined by the complement of this partial covering, one for each vertex of $\WW$, are finally contained in
  $\{\WW_{0,\ii}^{\epsilon_0,\delta_0}\}_{\ii \in \I_0}$, not shown in the
 picture.}
 \label{fig:partition}
\end{figure}
We proceed to prove \tref{t:thm} by showing for each face $\WW_{j,\ii}$ the (somewhat stronger) bound
\begin{proposition}\label{p:lyap}
 For any $j,\ii$\,, \jpp{some $\weight \in \mathbb R_{>0}^{d}$ from \dref{d:Pendo1}} and sufficiently large ${\rho_0 \ge 3 d}$, one has
 for all $w \in (\WW_{j,\ii}^*)^{\epsilon_j, \delta_j}$,
 $\rho > \rho_0$, $v>v^*(\rho):= e^\rho$ and
 $x=\theta^w \in (v^{-1} \mathbb N_0)^d$ with
 $\rho_0< \|x\|_1 <\rho$, that
 \begin{equ} \label{e:lyap}
  {\Lv \uu_\weight^v(\theta^w) = v \sum_{r \in \R}
   \Lambda_{r,v}(\theta^w) \pc{\uu_\weight^v(\theta^w + v^{-1} c^r) -
    \uu_\weight^v(\theta^w)} < 0~.} \end{equ}
  \end{proposition}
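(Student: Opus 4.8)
The plan is to reduce the discrete drift in \eqref{e:lyap} to a sum that mirrors the deterministic Lyapunov drift \eqref{e:lyapunovdominance}, and then, working separately on each covering set $(\WW_{j,\ii}^*)^{\epsilon_j,\delta_j}$, to run a dominance argument in which strong $(\S,\weight)$-endotacticity forces the leading contribution to be strictly negative. First I would factor out the positive quantities $v$ and $U_\weight^v(\theta^w)$, so that the sign of \eqref{e:lyap} is that of
\[
 \sum_{r\in\R}\Lambda_{r,v}(\theta^w)\left[\left(\frac{U_\weight(\theta^w+v^{-1}c^r)}{U_\weight(\theta^w)}\right)^{\!v}-1\right].
\]
Writing $g(t)=t(\log t-1)$ with $g'(t)=\log t$ and using $\partial_i U_\weight(\theta^w)=\weight_i w_i\log\theta$ from \eqref{e:grad-U}, an exact integral form of $U_\weight(\theta^w+v^{-1}c^r)-U_\weight(\theta^w)$ yields the leading term $v^{-1}(\log\theta)\langle w,\crw\rangle$ plus a curvature remainder of order $v^{-2}$. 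The hypotheses $\|x\|_1<\rho$ and $v>v^*(\rho)=e^\rho$ make the increment $v^{-1}c^r$ negligible against $x$, so that the binomial-to-monomial correction in \eqref{e:jrates} and the curvature remainder are both controllably small.

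Raising to the $v$-th power turns the leading term into an exponent $(\log\theta)\langle w,\crw\rangle/U_\weight(\theta^w)$. Since $\|x\|_1>\rho_0\ge 3d$ forces $\max_i w_i>0$, one has $U_\weight(\theta^w)\gtrsim \theta^{\max_i w_i}\log(\rho_0/d)$ while $\langle w,\crw\rangle=O(1)$ (the entries of $w$, $\weight$, and $c^r$ are bounded), so this exponent is small and
\[
 \left(\frac{U_\weight(\theta^w+v^{-1}c^r)}{U_\weight(\theta^w)}\right)^{\!v}-1=\frac{(\log\theta)\langle w,\crw\rangle}{U_\weight(\theta^w)}\,\bigl(1+o(1)\bigr),
\]
with the error made explicit rather than merely asymptotic. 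Replacing $\Lambda_{r,v}$ by $k_r\theta^{\langle w,c^r_\inn\rangle}$ then reduces the sign of \eqref{e:lyap} to the sign of the quantity appearing on the right of \eqref{e:lyapunovdominance}, namely $S(w,\theta):=\sum_{r\in\R}k_r\,\theta^{\langle w,c^r_\inn\rangle}\,\langle w,\crw\rangle$.

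Next I would localize on a fixed covering set $(\WW_{j,\ii}^*)^{\epsilon_j,\delta_j}$. Setting $\PP=\PP(w)$ to be the coordinates along which $w$ is not strongly negative, the reactions whose input complex lies off the exposed face $\WW_{j,\ii}$, or which charge a coordinate with $w_i<0$, carry exponents $\langle w,c^r_\inn\rangle$ strictly below the maximum $M:=\max_{r\in\R(\PP)}\langle w,c^r_\inn\rangle$, with a gap $g>0$; the dominant reactions are exactly $\R(\PP)_w$. By \lref{l:endotacticorpositive}, strong $(\S,\weight)$-endotacticity passes to $(\S,\C,\R(\PP))$, so $\R(\PP)_w$ contains at least one $(w,\weight)$-dissipative reaction (contributing $k_r\theta^M\langle w,\crw\rangle$ with $\langle w_\PP,\pi_\PP\crw\rangle<0$, or a strongly negative term from an output leaving $\PP$) and no $(w,\weight)$-explosive reaction, in the sense of \dref{d:dissipative}--\dref{d:Pendo1}. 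Hence the dominant part of $S(w,\theta)$ is bounded above by a strictly negative multiple of $\theta^M$, the subdominant part is $O(\theta^{M-g})$, and the curvature and finite-$v$ corrections are likewise $o(\theta^M)$, so that $S(w,\theta)<0$ once $\theta$ (equivalently $\rho_0$) is large enough.

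The main obstacle — and the reason for the $\theta$-dependent parameters of \dref{def:416} — is securing these bounds uniformly across the boundaries of the spherical-image cells, where several reactions become co-dominant and the dissipative margin $|\langle w,\crw\rangle|$ shrinks. Within the $\delta_j=C_{2j}/\log\theta$ neighborhood the co-dominant exponents spread by only $O(C_{2j}/\log\theta)$, so competing rates differ by a bounded factor $e^{O(C_{2j})}$; the single guaranteed dissipative term must outweigh this bounded-but-large factor together with the subdominant terms, which is exactly why the gap parameter must satisfy $\epsilon_j\supset C_{2j+1}=K_0 e^{2c_*C_{2j}}$, growing exponentially in $C_{2j}$. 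I would therefore choose the constants level by level and run the estimate by induction on the face dimension, following the nested covering of \fref{fig:partition}: having removed the $\epsilon_j$-neighborhood of $\partial\WW_{j,\ii}^*$, the gap $g$ on each piece is bounded below, and one selects $\rho_0$ so that $\theta^{-g}$, the curvature terms, and the $O(v^{-1})$ terms are all smaller than the dissipative margin. The delicate point throughout is that this margin may itself degenerate near a boundary, so it must be tracked jointly with $g$ — precisely the role of the interlocking constants $\{C_i\}$, $c_*$, $K_0$ — and collecting the finitely many cell-by-cell choices yields the uniform negativity asserted in \eqref{e:lyap}.
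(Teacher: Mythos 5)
Your outline follows the same route as the paper (localization on the cells of the spherical image, dominance of the exposed reactions, strong $(\S,\weight)$-endotacticity supplying the negative leading term, and the exponential hierarchy $C_{2j+1}=K_0e^{2c_*C_{2j}}$ to survive the cell boundaries), and you correctly identify where the difficulty lies. But two steps, as stated, would fail. First, the claim that
\begin{equ}
 \Big(\frac{\uu_\weight(\theta^w+v^{-1}c^r)}{\uu_\weight(\theta^w)}\Big)^{v}-1
 =\frac{(\log\theta)\,\dtp{w}{\crw}}{\uu_\weight(\theta^w)}\,\bigl(1+o(1)\bigr)
\end{equ}
is false precisely in the regime that matters. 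On $(\F^*)^{\epsilon_j,\delta_j}$ the relevant scalar products are of size $C/\log\theta$, so $\dtp{w}{\crw}\log\theta$ is a \emph{bounded constant} (of order $C_{2j}$ for null reactions, $C_{2j+1}$ for the dissipative one), while the error in the exponent is an \emph{additive} constant $\zeta_*$ (\lref{l:taylor}) and the binomial-to-monomial correction is a bounded multiplicative factor $\kappa_r$ (\cref{r:noindicatoratboundary}) --- neither is $o(1)$ relative to the leading term. Hence one cannot reduce the sign of \eref{e:lyap} to the sign of $S(w,\theta)$ from \eref{e:lyapunovdominance}; one must carry these constants through and absorb them into the choice of $C_{2j+1}$, which is exactly the content of \eref{e:dividesums}, \cref{r:monomialdomination} and \eref{e:Cj-choice}. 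Your closing paragraph shows you sense this, but the quantitative reduction you propose contradicts it.

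Second, the replacement of $\Lambda_{r,v}$ by $k_r\theta^{\dtp{w}{c^r_\inn}}$ near the boundary of the positive orthant is not a consequence of $v>e^\rho$ and $\|x\|_1<\rho$ alone. When $w_i<0$ the concentration $x_i=\theta^{w_i}$ can be so small that $\Lambda_{r,v}$ degenerates (or vanishes) for a reaction with $(c^r_\inn)_i>0$; the paper needs \lref{l:key2} --- the dichotomy between $\max_\ell w_\ell\ge\eta$ (forcing $\log v\ge\theta^\eta$) and some $w_{\ell'}\le-\beta$ (forcing $v\ge\theta^\beta$ via $vx_{\ell'}\in\mathbb N$), together with the \emph{asiphonic} hypothesis (existence of $r_\star$ with $c_\inn^{r_\star}=0$) to bound $w_i$ from below for inputs of exposed non-explosive reactions --- and \lref{l:pointingoutside} for dissipative reactions whose output leaves $\PP$, where the negativity of $\h{v}{x}$ comes from the $\log(v^{-1}c^r_i)$ terms rather than from $\dtp{w}{\crw}$. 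Your proposal never invokes the asiphonic condition and replaces the fixed extinction set $\PP=\supp\{x\}$ by a $w$-dependent set of ``not strongly negative'' coordinates, which muddles this part of the argument; as written, the dominance claim for $S(w,\theta)$ does not control these boundary contributions.
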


  \begin{definition}\label{d:st-cond}
   Throughout, we refer to the relations set in \pref{p:lyap}
   between the quantities $\rho_0, \rho, w, x, v^*$ and $v$ as the \emph{standard
   conditions}. These choices will continue to hold throughout the section,
   where in particular $\log \theta \ge 1$
   (since $d \theta \ge \|x\|_1 > 3 d$).
  \end{definition}
  \rmk{\label{r:proofequivalence} Proving \pref{p:lyap} results with
   \eref{e:expdrift2}. Indeed, \jpp{for any $\weight \in \Rr_{>0}^d$ as in \dref{d:Pendo1},} fixing $\rho_0$ large enough
   (that accommodates all $j,\ii$ pairs),
   for $v > v^*(\rho)$ and $\rho_0 < \|x\|_1 < \rho$
   the inequality \eref{e:expdrift2} is satisfied by \eref{e:lyap},
   while for $\|x\|_1 \leq \rho_0$ we get \eref{e:expdrift2} from
   \begin{equs}
    \Lv \uu^v(x) &= v\sum_{r \in \R} \Lambda_{r,v}(x) \pc{\uu^v(x + v^{-1} c^r) - \uu^v(x)}\\&  \leq m \, \sup_{r \in \R, \|x\|_1 \leq \rho_0}
    \big\{ \lambda_r(x) \big\} \,
    \sup_{\|x\|_1 \leq \rho_0+ \max_r \|c^r\|_1} \Big\{ v \uu(x)^v \Big\}
    \leq e^{b(\rho_0)v}~.\end{equs}}

   \rmk{\jp{\dref{def:416}} can be understood
    heuristically by inspection of \eref{e:lyapunovdominance}: For every face $\WW_{j,\ii}^*$ through our choice of $\{\epsilon_j, \delta_j\}$ we are ensuring that the reaction rate of dominant reactions is larger than any other by at least a factor of $e^{K_c C_{2j+1}}$ for a constant $K_c > 1$. This choice also guarantees that the scalar product contribution, \ie $\dtp{w}{\crw} \log \theta$, of at least one among such reactions is negative and bounded away from zero, while all the others are at most $C_{2j}$ (from the definition of strongly endotactic \abbr{crn}). Choosing $C_{2j+1}$ large enough \abbr{wrt} $C_{2j}$ establishes the desired bound \eref{e:lyap}. }

   \subsubsection{Face-dependent estimates}

   To prove \pref{p:lyap}, consider now a \jpp{fixed $\weight \in \mathbb R_{>0}^{d}$ as in \dref{d:Pendo1} and a} fixed face $\F = \WW_{j,\ii}$ with spherical image $\F^* = \WW_{j,\ii}^*$. For $w \in \F^*$ we define a partition of $\R = \R_+ \cup \R_0 \cup \R_-$ (which depends on $\ii$\jpp{, $j$ and $\weight$):}
   \begin{myitem}
    \item $\R_-$ are those reactions with $c^r_\inn \in \F$ that are \jpp{$(w,\weight)$}-dissipative,
    \item $\R_0$ are those reactions with  $c^r_\inn\in \F$ that are \jpp{$(w,\weight)$}-null,
    \item $\R_+$ are those reactions for which $c_\inn^r\notin \F$.
  \end{myitem}
  The case $c^r_\inn\in\F$ and $\dtp{w}{\jpp{\crw}}>0$ is excluded because the \jpp{network is}
  assumed to be strongly \jpp{$(\S,\weight)$-endotactic.} This property also ensures that the set  $\R_-$ is nonempty for all $w$. This partition of $\R$
  naturally generalizes to a partition of $\R(\PP)$ generated by
  any face of
  the polytope $\WW(\PP)$ and the corresponding dual from $\WW(\PP)^*\,$.
  \begin{remark}\label{r:duality}
   Consider the set $\R_\F=\pg{r\in\R~:~ c_\inn^r\in\F}$.
   Note that $r \in \R_\F$ maximizes $\dtp{w}{c_\inn^r}$ for
   all $w \in \Wji$. That is, $\dtp{w}{c_\inn^{\bar r} - c_\inn^{r}} < 0$
   whenever $\bar r\notin \R_\F$. In particular, $\R_w=\R_w(\S) = \R_\F
   $ for any such $w$. Further, by continuity
   $\dtp{n}{c_\inn^{\bar r} - c_\inn^r} \le 0$ for all $n \in \NN(\WWji)$, with equality for a $\bar r \notin \R_\F$ iff ${c_\inn^{\bar r}} \in \WW_{d-1,\kk}$
   and $\F \subset \partial \WW_{d-1,\kk}$
   for some face $\WW_{d-1,\kk}$ to which $n$ is normal.
  \end{remark}
  \begin{remark}\label{r:dec-Fstar}
   By \rref{r:duality} the preceding decomposition does not depend on $w\in \Wji$
   (\ie $\dtp{w}{\crw} < 0$ for any $r \in \R_-$ and $w \in \Wji$).
   Indeed, if $\dtp{w_0}{\crw}=0$ and $\dtp{w_1}{\crw}<0$ for some $w_0, w_1 \in \Wji$ and $r \in \R_\WWji$,
   then since $\F^*$ is open $w_2 := w_0-\eta w_1 \in \F^*$ for some $\eta >0$ small enough, with $\langle w_2, \crw \rangle > 0$ and $r \in \R_{w_2}$
   contradicting our assumption
   that the \abbr{crn}
   is strongly \jpp{$(\S,\weight)$-endotactic.}
  \end{remark}

  Defining \begin{equ}\label{e:Q}
  {\qq{r}{x} := {\uu(x)} \big[ \uu^v(x + v^{-1}c^r)/\uu^v(x)-1\big]}~,
  \end{equ}
  we will obtain \eref{e:lyap} by showing that under our standard conditions, for any $r_- \in \R_-$\,, and all $w \in (\F^*)^{\epsilon, \delta}$\,, we have ${\qq{r_-}{x}} < 0$ and
  \begin{equ}\label{e:dividesums}
   \sum_{{r \in \R_+ \cup \R_0}} \frac{\Lambda_{r,v}(\theta^w)}{\Lambda_{r_-,v}(\theta^w)}  {\qq{r}{\theta^w}}
   < -{\qq{r_-}{\theta^w}}~.
  \end{equ}

  As the map \eref{dfn:topic-p} has $(\RR^d)^o$ as domain, in order to establish \tref{t:thm} on $\partial \RR^d$ the program summarized above has to be carried out on each $\PP \subseteq \S$ separately.

  \subsubsection{Bounding terms}

  We devote this section to bounds on the summands of
  \eref{e:lyap} depending on their classification into
  $\R(\PP)_+, \R(\PP)_0$ or $\R(\PP)_-$. We do this in two phases. First, in \cref{r:noindicatoratboundary}, we show that under our standard conditions
  there exists {$\Clambda<\infty$} such that for all $r\in\R(\PP)$, $w_\PP \in S^{d_\PP-1}$:
  \begin{equ}\label{e:lambdas}
   \frac{\Lambda_{r,v}(\theta^{w_\PP})}{\Lambda_{r_-,v}(\theta^{w_\PP})} \leq \Clambda \theta^{\dtp{w_\PP}{c_\inn^r - c_\inn^{r_-}}}~.
  \end{equ}
  Then, in Lemmas~\ref{l:epscr}--\ref{l:pointingoutside}, we bound the terms {$\qq{r}{x}$} from above. For $r \in \R(\PP)_0$ or $\R(\PP)_+$ this limits the possibly positive contribution of such terms, while ensuring a negative enough contribution for $r_- \in \R(\PP)_-$. We call those terms the
  \emph{Lyapunov} terms, as opposed to the \emph{monomial} terms in
  \eref{e:lambdas}.

  We now proceed with the first part of our program. Fixing $\PP \subseteq \S$ and
  $(j,\ii) \in \Ip^*\,$,
  consider $w = w_\PP \in (\WW(\PP)_{j,\ii}^*)^{\epsilon_j, \delta_j}$.

  The estimate from
  \cite[(3.18)]{agazzi17} presented in \lref{l:key2} is
  crucial for the understanding of the dynamics near vanishing
  densities when one establishes \eref{e:lambdas}.
  We illustrate the ideas in the proof of this result for the $2$-dimensional case of \fref{f:ex2}.
  Given any vector $w$ of length 1, we say that it \emph{exposes} the
  reaction(s)
  originating in the $0$-dimensional face of
  $\WW$ to which $w$ is dual. In the example, writing $w=(\cos \phi,\sin
  \phi)$ we see that $w$ exposes
  \begin{equa}
   \cornertwo ,& \text{ if } \phi\in(-\pi/6,\pi/4)~,\\
   \cornerthree ,& \text{ if } \phi\in(\pi/4,\pi)~,\\
   (0,0),& \text{ if } \phi\in(\pi,11\pi/6)~.\\
  \end{equa}
  Here, $\cornertwo $ is the origin of the reaction $\Ctwo \to \Cthree $, and similarly
  for the others.
  For our example, the 3 cases have, respectively, 0, 1, or 2 points on
  an axis.\\
  Consider one of the arcs above (or its exposed reaction). We call such
  an arc \emph{non-critical} if its closure contains a direction which coincides
  with a negative axis, and otherwise, critical.
  So $\cornerthree $ is non-critical, as it  contains the negative $A$ axis;
  $(0,0)$  is non-critical, containing the negative $A$ and $B$ axis;
  $\cornertwo $ is critical.

  The issue is now that the discrete reaction rates \eref{e:jrates} may
  differ significantly from the
  continuous ones \eref{e:rates}. For the example, we need to compare (setting \abbr{wlog} $k_1=k_2=k_3=1$)
  \begin{alignat*}{3}
   \cornertwo :   & \quad  x_Ax_B^2~, &   & \text{ and ~~} {v^{-3}}\binom{v x_A}{1}\binom{v x_B}{2}1!\,2!~, \\
   \cornerthree : & \quad x_B^3~,     &   & \text{ and ~~} {v^{-3}}\binom{v x_B}{3}3!~,                     \\
   (0,0):         & \quad  1~,        &   & \text{ and ~~} 1~,                                              \\
  \end{alignat*}
  depending on the continuous model, respectively the discrete one in
  volume $v$.

  We start with the non-critical case $(0,0)$. Here, the rates are
  obviously equal. In  the case $\cornerthree $, which is also non-critical, we
  see that the rates do \emph{not} depend on $x_A$: $(c_\inn^r)_A = 0~$. In the corresponding
  arc, $w_B \ge 0$. Hence, $x_B \geq 1$ and
  $$
  \frac{{v^{-3}}\binom{v x_B}{3}3!}{x_B^3} = 1 - \mathcal
  O\pc{\frac{1}{v x_B}} ~,
  $$
  when $v$ is large.

  The critical case $\cornertwo $ is more interesting: If the angle $\phi$ is
  positive, then both components of $\theta^w$ diverge to $\infty$ as
  $\theta\to\infty$, and therefore
  \begin{equ}\label{e:q}
   \frac{\Lambda_{r,v}(x)}{\lambda_{r}(x)}\equiv\frac{{v^{-3}}\binom{v x_A}{1}\binom{v x_B}{2}1!\, 2!}{x_A x_B^2} = 1-\mathcal O\pc{\frac{1}{v x_B}} ~.
  \end{equ}
  However, when $\phi\in[-\pi/6,0]$, a more careful estimate is needed.
  We reparameterize $v$ and $x$ as follows:
  We fix $\rho$, a distance from the origin, and require, as in our standard conditions,
  \begin{equ}\label{e:star}
   v \geq e^\rho ~,
  \end{equ}
  and
  \begin{equ}\label{e:starstar}
   (x_A,x_B)=\theta^w, \text{ with } \rho \geq \|\theta^w\|_1 \geq \theta^{1/2}~,
  \end{equ}
  where in the last inequality we have used that for our choice of $\phi$, we have $w_A > 1/2$.
  In this case, the orbit of $\theta^w$ approaches the $\{B=0\}$ axis as
  $\rho$ grows, but the
  distance from the $\{B=0\}$ axis is \emph{bounded below} by
  $\theta^{-\cos(\pi/6)} = \theta^{-1/2}$.
  Therefore, we again find that
  \begin{equ}
   vx_B \ge e^\rho \cdot (2\rho^2)^{-1/2}~,
  \end{equ}
  which again grows beyond bounds as $\rho\to\infty$.
  Therefore, \eref{e:q} holds again, with a r.h.s.~$1-\OO(\rho e^{-\rho})$.
  We will use a slightly stronger variant:
  Since $e^\rho \theta^{-1/2}$ diverges and rate terms behave polynomially in $\rho$, we find for any $\mu\in(0,1)$
  $$\lim_{\rho \to \infty}\theta^{w_B} v^\mu = \infty~,$$
  which we will use in the form
  \begin{equ}\label{e:theequ}
   \lim_{\rho \to \infty} \mu \log v + w_B \log \theta = \infty~,
  \end{equ}
  with our choice of \eref{e:star} and \eref{e:starstar}.

  Having motivated the example in all detail, the extension to an
  arbitrary chemical network is straightforward, but we need to
  generalize properly the critical case. We do so in the following lemma, where we establish another bound similar to \eref{e:theequ} for later use.
  \begin{lm}\label{l:key2}
   Consider a face $\F \in \WW(\PP)$ and the corresponding decomposition of $\R(\PP)$. Then for any
   $\mu \in (0,1]$, $M < \infty$, there exists
   $\rho_0(\mu, M, C_{2j})$, such that
   under our standard conditions, for $w \in (\F^*)^{\epsilon, \delta}$ and
   $x=\theta^w$,
   \begin{equ}\label{e:minus-wk}
    \mu \log v - w_i \log \theta > M \,.
   \end{equ}
   Under the same conditions, either
   $(c_\inn^r)_i =0$ for all $r \in \R(\PP)_- \cup \R(\PP)_0$ or
   \begin{equ}\label{e:plus-wk}
    \mu \log v + w_i \log \theta > M \,.
   \end{equ}
  \end{lm}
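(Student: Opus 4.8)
The plan is to reduce both inequalities to elementary estimates on the single quantity $w_i\log\theta=\log x_i$, using only the standard conditions $\log v>\rho$, $x_i\le\|x\|_1<\rho$ and $\log\theta\ge 1$. For \eref{e:minus-wk}, since $x_i\le\|x\|_1<\rho$ we have $w_i\log\theta=\log x_i<\log\rho$ for \emph{every} $i$, so $\mu\log v-w_i\log\theta>\mu\rho-\log\rho$; as $\mu\rho-\log\rho\to\infty$, choosing $\rho_0=\rho_0(\mu,M)$ large yields \eref{e:minus-wk}, requiring no information about $\F$. For \eref{e:plus-wk} the sign of $w_i$ is decisive: if $w_i\ge 0$ then $w_i\log\theta\ge 0$, whence $\mu\log v+w_i\log\theta\ge\mu\log v>\mu\rho>M$ for $\rho_0$ large, and we are done.

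The substance lies in the case $w_i<0$, i.e. $x_i=\theta^{w_i}<1$ with species $i$ heading towards extinction. If $(c_\inn^r)_i=0$ for all $r\in\R(\PP)_-\cup\R(\PP)_0$ we are in the first alternative and there is nothing to prove. Otherwise pick $r\in\R(\PP)_-\cup\R(\PP)_0$ with $(c_\inn^r)_i\ge 1$; by definition of the decomposition $c_\inn^r\in\F$, so $\F$ has a vertex with strictly positive $i$-th coordinate and is therefore \emph{not} contained in the $y_i$-minimizing face of $\WW(\PP)$, equivalently $-e_i\notin\overline{\F^*}$. Since $\overline{\F^*}$ is compact and avoids both $-e_i$ and (on the relevant portion) the boundary of the positive orthant, the continuous functions $w\mapsto -w_i$ and $w\mapsto w_{\max}:=\max_k w_k$ satisfy $-w_i\le a_\F<1$ and $w_{\max}\ge b_\F>0$ on $\overline{\F^*}$, with constants attained that depend only on $\WW(\PP)$ and $\F$. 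Because $w$ lies in the fattening $(\F^*)^{\epsilon_j,\delta_j}$ with $\delta_j=C_{2j}/\log\theta\to 0$ as $\rho\to\infty$, these bounds persist (with slightly worse constants, say $K_\F:=2a_\F/b_\F$) once $\rho_0$ is large enough — this is where the dependence of $\rho_0$ on $C_{2j}$ enters.

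Granting $|w_i|\le K_\F\,w_{\max}$ on this set, I would close exactly as in the worked two-dimensional example (where $w_A\ge\sqrt3/2$ and $w_B\ge-1/2$): from $\theta^{w_{\max}}\le\|x\|_1<\rho$ one gets $\log\theta\le w_{\max}^{-1}\log\rho$, hence $w_i\log\theta\ge w_i\,w_{\max}^{-1}\log\rho\ge-K_\F\log\rho$, and therefore $\mu\log v+w_i\log\theta>\mu\rho-K_\F\log\rho$, which tends to $+\infty$, giving \eref{e:plus-wk} for $\rho_0$ large. The only routine limits invoked are $\mu\rho-\log\rho\to\infty$ and $\mu\rho-K_\F\log\rho\to\infty$.

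The main obstacle is the geometric step of the second paragraph: turning ``an exposed dissipative or null reaction consumes the almost-extinct species $i$'' into the quantitative separation $|w_i|\le K_\F\,w_{\max}$, i.e. into the statement that on (a neighborhood of) $\F^*$ species $i$ cannot become exponentially more extinct than the dominant species is large. This is precisely the general-dimensional form of the critical/non-critical dichotomy illustrated for the vertex $A+2B$ in \fref{f:ex2}. The delicate subcase is when \emph{every} input complex of $\R(\PP)$ already uses species $i$, so that the $y_i$-minimum over $\WW(\PP)$ is positive and $-e_i$ may belong to $\overline{\F^*}$ while the first alternative fails; excluding this through the asiphonic hypothesis (no siphon $\{i\}$, so that $i$ can be regenerated), restricted to the present species $\PP$, and then propagating all constants uniformly through the $\epsilon_j,\delta_j$-fattening, is where the argument requires the most care.
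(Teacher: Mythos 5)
Your treatment of \eref{e:minus-wk} is correct and in fact more elementary than the paper's (which instead splits on whether $\max_\ell w_\ell$ exceeds a threshold $\eta$ and uses $\log v>\|x\|_1\ge\theta^{\eta}$ in one case and the unit-vector property in the other); the bound $\mu\log v-w_i\log\theta>\mu\rho-\log\rho$ from $\log v>\rho$ and $\log x_i<\log\rho$ suffices. The case $w_i\ge 0$ of \eref{e:plus-wk} is also fine. The gap is in the step you yourself flag as the crux: the claim that, by compactness, $w_{\max}\ge b_\F>0$ on $\overline{\F^*}$ is \emph{false} in general, and with it the constant $K_\F=2a_\F/b_\F$ becomes infinite. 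For instance, with input complexes $\{(0,0),(1,0),(0,1)\}$ and $\F$ the vertex $(1,0)$, the closure $\overline{\F^*}=\{w\in S^1: w_1\ge 0,\ w_1\ge w_2\}$ contains $-e_2$, where $w_{\max}=0$; more generally $\overline{\F^*}$ meets $\{w_{\max}=0\}$ whenever it touches $-e_j$ for a species $j$ not used by the complexes of $\F$. Knowing separately that $-w_i\le a_\F<1$ and that $w_{\max}$ is nonnegative does not yield the ratio bound $|w_i|\le K_\F\,w_{\max}$ at points where both quantities vanish; that bound is a genuine linear-algebraic fact, not a consequence of continuity plus compactness, so your final display $\mu\rho-K_\F\log\rho\to\infty$ is not justified.

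The missing quantitative ingredient is exactly the paper's key inequality: by asiphonicity the zero complex is an input complex, so by the duality property ($c_\inn^r\in\F$ maximizes $\dtp{w'}{c_\inn^{\,\cdot}}$ over $\R(\PP)$ for every $w'\in\F^*$) one has $\dtp{w'}{c_\inn^r}\ge\dtp{w'}{0}=0$, whence $-w'_i\,(c_\inn^r)_i\le\sum_{\ell\ne i}w'_\ell\,(c_\inn^r)_\ell\le c_*\max_\ell\{w'_\ell\}$. This is what controls $-w_i$ in terms of $w_{\max}$ (up to the $\delta$-fattening error $\delta c_*=c_*C_{2j}/\log\theta$), and it degenerates gracefully precisely where $w_{\max}\to 0$ because $w_i\to 0$ there too. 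The paper then closes differently from you: when $\max_\ell w_\ell<\eta:=\mu/(2c_*\sqrt{d_\PP})$, the unit-vector constraint forces some $w_{\ell'}\le -1/\sqrt{d_\PP}$, and integrality $vx_{\ell'}\in\mathbb N$ gives $v\ge\theta^{1/\sqrt{d_\PP}}$, so $\mu\log v+w_i\log\theta\ge(\mu/\sqrt{d_\PP}-\eta c_*)\log\theta-c_*C_{2j}$ grows with $\theta$; when $\max_\ell w_\ell\ge\eta$ the term $\mu\log v>\mu\theta^{\eta}$ dominates outright. Either your bookkeeping (via $\log\theta<\log\rho/w_{\max}$) or the paper's would work once the inner-product inequality replaces the faulty $b_\F>0$ step; as written, however, the proof does not go through.
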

  \begin{proof} Setting $\beta := 1/\sqrt{d_\PP}$ and
\begin{equation}\label{e:cstar-def}
 c_* := \max_{r,r_- \in \R} \{\|c_\inn^r-c_\inn^{r_-}\|_2,\|\crw\|_2\} \,,
\end{equation}
we fix $w \in (\F^*)^{\epsilon, \delta}$ and
prove the desired results in the following cases:
   \begin{myitem}
    \item
    If $\max_\ell \{w_\ell\} \ge
    \mu \beta/(2 c_*) =: \eta$, then we have $d \theta \ge \|x\|_1 > \rho_0$,
    $\log v > \|x\|_1 \ge \theta^\eta$ and it follows that
    \begin{equ}
     \mu \log v {- |w_i|}
     \log \theta > \inf_{\theta > \rho_0/d} \{ h(\theta) \}
     > M \,,
     \label{e:p1key2}
    \end{equ}
    for $h(\theta):=\mu \theta^\eta - \log \theta$ and
    $\rho_0=\rho_0(\mu,M)$ large enough, concluding the proof in this regime.
    \item Alternatively,
    $\max_\ell \{w_\ell\} {< \eta < \beta}$
    requires $w_{\ell'} \le -\beta$ for some $\ell'\in\PP$\,. Consequently, as
    $v x_{\ell'} \in \mathbb N$, we have that $v
    \geq \theta^\beta$. With $\eta \le \eta c_* = \mu \beta/2$, we
    obtain
    \[
     \mu \log v - w_i \log \theta \ge (\mu \beta - \eta) \log \theta
     > M \,,
    \]
    provided $\rho_0(\mu,M)$ is large enough, which
    proves \eqref{e:minus-wk}.
    To prove \eref{e:plus-wk}, we fix some $r\in \R(\PP)_-\cup\R(\PP)_0\,$
    with $(c_\inn^r)_i > 0$
    and show that \eqref{e:plus-wk} holds. Recall from
    \aref{a:ase} (b) that $c_\inn^{r_\star}=0$
    for some $r_\star \in \R\,$. Hence,
    we have from \rref{r:duality}
    that $0 \le \dtp{w'}{c_\inn^r}$ for all $w' \in \F^*$.
    Having $w' \in \F^*$ with $\|w-w'\|_2 < \delta$
    yields
    $$
    0 \le \dtp{w'}{c_\inn^r} \le
    \dtp{w}{c_\inn^r} + \delta c_*
    < w_i (c_\inn^r)_i + (\eta + \delta) c_* \,,
    $$
    which in turn implies that $w_i > - (\eta + \delta) c_*$. Combining this with
    $\delta = C_{2j}/\log \theta$ (see \dref{def:416}), we finally have
    $$
    \mu \log v + w_i \log \theta > (\mu \beta -\eta c_*)  \log \theta -
    c_* C_{2j} \geq M~,$$
    provided that $\rho_0 \geq d \exp(2(M+ c_* C_{2j})/(\mu \beta))$. \qed
  \end{myitem}\let\qed\relax
 \end{proof}

 \noindent We show that \eref{e:lambdas} is a direct consequence of
 \lref{l:key2}.
 \begin{coro}\label{r:noindicatoratboundary}
  Under our standard conditions, \eqref{e:lambdas}
  holds for some
  $\Clambda$ finite, any $(j,\ii) \in \Ip^*$
  and all $r_- \in \R(\PP)_-$, $r \in \R(\PP)$.
 \end{coro}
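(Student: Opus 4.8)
The plan is to reduce the discrete rate ratio to its continuous analogue. Since $x=\theta^{w_\PP}$ and $\supp c_\inn^r \subseteq \PP$ for every $r \in \R(\PP)$, the continuous rate \eqref{e:rates} evaluates to $\lambda_r(\theta^{w_\PP}) = k_r\,\theta^{\dtp{w_\PP}{c_\inn^r}}$, so that
\[
\frac{\lambda_r(\theta^{w_\PP})}{\lambda_{r_-}(\theta^{w_\PP})} = \frac{k_r}{k_{r_-}}\, \theta^{\dtp{w_\PP}{c_\inn^r - c_\inn^{r_-}}}
\]
already exhibits the exact monomial on the right-hand side of \eqref{e:lambdas}. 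Thus I would only need the two-sided comparison $\Lambda_{r,v} \le \lambda_r$ and $\Lambda_{r_-,v} \ge c_0\,\lambda_{r_-}$, with $c_0 > 0$ uniform over the standard conditions; dividing these yields \eqref{e:lambdas} with $\Clambda := c_0^{-1}\max_{r,r_-}(k_r/k_{r_-})$, finite since $\R$ is finite and each $k_r \in (0,\infty)$.

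The upper bound is elementary and needs no boundary analysis. Writing $\binom{v x_i}{(c_\inn^r)_i}(c_\inn^r)_i! = \prod_{\ell=0}^{(c_\inn^r)_i - 1}(v x_i - \ell) \le (v x_i)^{(c_\inn^r)_i}$ and multiplying by $k_r^{(v)} = v^{-\|c_\inn^r\|_1}k_r$ from \eqref{e:jrates} immediately gives $\Lambda_{r,v}(\theta^{w_\PP}) \le k_r \prod_i x_i^{(c_\inn^r)_i} = \lambda_r(\theta^{w_\PP})$ for all $r \in \R(\PP)$.

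The lower bound on $\Lambda_{r_-,v}$ is the crux, and the step I expect to be the main obstacle, since here the vanishing-concentration directions must be controlled. Factoring out $\lambda_{r_-}$ as above gives
\[
\frac{\Lambda_{r_-,v}(\theta^{w_\PP})}{\lambda_{r_-}(\theta^{w_\PP})} = \prod_{i\,:\,(c_\inn^{r_-})_i>0}\ \prod_{\ell=1}^{(c_\inn^{r_-})_i-1}\pc{1 - \frac{\ell}{v x_i}}~,
\]
which stays bounded below by a positive constant precisely when $v x_i = v\,\theta^{w_i}$ is large for every index $i$ with $(c_\inn^{r_-})_i > 0$. Here I would invoke \lref{l:key2}: as $r_- \in \R(\PP)_-$, each such $i$ carries a positive input from $\R(\PP)_- \cup \R(\PP)_0$, so the dichotomy in that lemma rules out its first alternative and delivers \eqref{e:plus-wk}. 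Taking $\mu = 1$ there and using $\log v \ge 0$ gives $\log(v x_i) = \log v + w_i \log\theta > M$, hence $v x_i > e^M$, which I can force as large as I like by enlarging $\rho_0$. Choosing $M$ so that $e^M$ exceeds twice the maximal complex entry makes each factor at least $1/2$; as there are at most $\|c_\inn^{r_-}\|_1$ such factors, I obtain $\Lambda_{r_-,v}(\theta^{w_\PP}) \ge c_0\,\lambda_{r_-}(\theta^{w_\PP})$ with $c_0 > 0$ depending only on the network.

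The conceptual point worth stressing is that an index $i$ with $w_i < 0$ corresponds to approaching the boundary $\{x_i = 0\}$, where $x_i = \theta^{w_i} \to 0$ as $\theta \to \infty$, so that $\lambda_{r_-}$ risks being inflated relative to the genuine discrete rate $\Lambda_{r_-,v}$. \lref{l:key2} resolves exactly this by exploiting the integrality $v x_i \in \mathbb N$ together with $v \ge e^\rho$: whenever $r_-$ actually consumes species $i$, the product $v x_i$ must still diverge, so the denominator never degenerates and the comparison constant $c_0$ stays bounded away from $0$.
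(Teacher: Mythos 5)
Your proposal is correct and follows essentially the same route as the paper: both arguments reduce to the two-sided comparison $\Lambda_{r,v}\le\lambda_r$ and $\Lambda_{r_-,v}\ge c_0\,\lambda_{r_-}$, with the lower bound secured by invoking \eqref{e:plus-wk} of \lref{l:key2} with $\mu=1$ to force $vx_i$ large for every input index of $r_-$. The only cosmetic difference is the value of the uniform constant (the paper uses the monotonicity of the ratio in $vx_i$ and its minimal value $\kappa_{r_-}=\prod_i (c_\inn^{r_-})_i!\,(c_\inn^{r_-})_i^{-(c_\inn^{r_-})_i}$ attained at $vx=c_\inn^{r_-}$, whereas you push $vx_i$ far enough that each factor exceeds $1/2$), which does not affect the conclusion.
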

 \begin{proof}
  Letting $\xi(k):=k! k^{-k}$ for $k \in \mathbb N$ and $\xi(0)=1$,
  we set
  \[
   \kappa_r := \prod_{i=1}^d \xi ( (c^r_\inn)_i) > 0 \;.
  \]
  Comparing \eref{e:jrates} and \eref{e:rates}
  we have $\Lambda_{r,v}(x) \le \lambda_r(x)$ for any
  $x \in (v^{-1}\mathbb N_0)^{d_\PP}$, $v \ge 1$.
  Further, the
  ratio $\Lambda_{r_-,v}(x)/\lambda_{r_-}(x)$ is non-decreasing
  in each $v x_i$ and equals $\kappa_{r_-}$ when  $v x = c_\inn^{r_-}$.
  Since by \eqref{e:plus-wk} with $M = \log c_*$ and $\mu = 1$ we have $v x_i \ge (c_\inn^{r_-})_i$ for any
  $v > v^*(\rho_0)$,
  $r_- \in \R(\PP)_-
  \cup \R(\PP )_0
  $, $w \in (\F^*)^{\epsilon,\delta}$, $i \le d$\,,
  the claim follows by setting
  $\Clambda := \max_{r} \{k_r\} /
  \min_{r_-} \{\kappa_{r_-}  k_{r_-} \}$.
 \end{proof}

 We now turn to the second part of this section, dedicated to
 upper bounds on the Lyapunov terms {$\qq{r}{x}$}.
 We will first estimate these terms by approximating, in \lref{l:taylor}, $\uu(x+v^{-1} c^r)/\uu(x)$ as an exponential, whose argument is then bounded from above in Lemmas~\ref{l:epscr}--\ref{l:pointingoutside}  based on geometric considerations.
 Note that $\uu(x+v^{-1} c^r)-\uu(x)$ is a sum of terms of the
 form
 \def\xxx{(x_i +v^{-1} c_i^r)}
 $$
 T_i=\jpp{\weight_i} \pc{\xxx \log \xxx-\xxx -x_i \log x_i + x_i}~.
 $$
 We bound $T_i$ according to 3 cases:
 \begin{myenum}
  \item If $i\in\PP$ then we know $x_i\geq v^{-1}$ and we bound $v T_i$ by
  $\crwi \log x_i + \jp{g_r}(v,x)$. We show below that $|g(\cdot)|$ is
  globally bounded.
  \item If $i \in \supp \{c^r_\outt \} \cap \PP^c$ then
  $v T_i=\crwi \log (v^{-1} c^r_i)+\jp{g_r}(v,x)$.
  \item In the remaining case \jp{$c^r_i=0$} and hence $T_i=0$.
 \end{myenum}

 In view of the above, we define
 as in \cite{agazzi17},
 \begin{equs}
  \pc{\nabla_{r,v} \uu}_i :=
  \left\{\begin{array}{c@{,}l}
  \log x_i	         & 	\qquad i \in \PP  \,	\\
  \log (v^{-1} c^r_i)  & \qquad i \in \supp \{c^r_\outt \} \cap \PP^c \,
  \label{dfn:gradU-P} \\
  0                    &  \qquad \text{ otherwise }
  \end{array}\right..
 \end{equs}

 \jpp{\noindent We now extend \cite[Lemmas 3.4, 3.6]{agazzi17} to the class of Lyapunov functions $\uu_\weight$ at hand.}

 \begin{lm}
  \jpp{For any $\weight \in \Rr_{>0}^d$ as in \dref{d:Pendo1} }there exist finite $v_0$ and $\zeta_*$ such that for any $\PP \subseteq \S$,
  $r \in \R(\PP)$, $v \geq v_0$ and
  $x \in \Ov$
  with $\supp \{x\} = \PP$ and $x + v^{-1} c^r \in \Ov$, one has
  \begin{align}\label{d:zetar}
   \frac{\uu^v(x + v^{-1} c^r)}{\uu^v(x)}
     & = \exp \Big\{\frac{ \h{v}{x}
   + \zz(v,x)}{\uu(x)}\Big\}\,,
  \end{align}
  for $\h{v}{x} := \langle \nabla_{r,v}\uu(x), \crw \rangle$ and a function $\zz(v,x)$ with $|\zz(v,x)| \le \zeta_*$.
  \label{l:taylor}
 \end{lm}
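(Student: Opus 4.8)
The plan is to pass to logarithms and reduce the claim to a componentwise Taylor expansion together with a single scalar estimate. Fix $\PP$, $r\in\R(\PP)$, $v$ and $x$ as in the statement (so that $\uu\ge 1$ keeps all logarithms well defined and $x+v^{-1}c^r\in\Ov$ keeps the shifted argument nonnegative), and write
\[
 \frac{\uu^v(x+v^{-1}c^r)}{\uu^v(x)}=\exp\Big(v\log\frac{\uu(x+v^{-1}c^r)}{\uu(x)}\Big)\,,\qquad \Delta:=\uu(x+v^{-1}c^r)-\uu(x)=\sum_{i=1}^d T_i\,.
\]
Matching this with the asserted form $\exp\{(\h{v}{x}+\zz(v,x))/\uu(x)\}$, it suffices to prove (i) that $v\Delta=\h{v}{x}+g(v,x)$ with $|g|$ bounded by a constant, and (ii) that the second-order remainder of the outer logarithm contributes a further bounded amount after multiplication by $\uu(x)$.

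For (i) I would invoke the three-case split of $T_i$ recorded just before the lemma. For $i\in\PP$ we have $vx_i\in\mathbb N$ with $vx_i\ge 1$; writing $\log(x_i+v^{-1}c^r_i)=\log x_i+\log(1+c^r_i/(vx_i))$ gives
\[
 vT_i=\crwi\log x_i+\weight_i\big[(vx_i+c^r_i)\log(1+c^r_i/(vx_i))-c^r_i\big]=\crwi\pc{\nabla_{r,v}\uu}_i+g_{r,i}\,,
\]
where the bracketed term depends only on the integer $vx_i\ge 1$ and on $c^r_i$ (bounded by $\max_r\|c^r\|_\infty$) and is therefore uniformly bounded; the extinction case $vx_i+c^r_i=0$ is handled by the convention $0\log 0=0$, which additionally forces $vx_i=-c^r_i$ to be bounded. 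For $i\in\supp\{c^r_\outt\}\cap\PP^c$ one has $x_i=0$ and, since $r\in\R(\PP)$, $c^r_i=(c^r_\outt)_i>0$, so directly $vT_i=\crwi\log(v^{-1}c^r_i)-\weight_i c^r_i=\crwi\pc{\nabla_{r,v}\uu}_i+g_{r,i}$; the remaining indices have $c^r_i=0$ and $T_i=0$. Summing over $i$ gives $v\Delta=\dtp{\nabla_{r,v}\uu(x)}{\crw}+g(v,x)=\h{v}{x}+g(v,x)$ with $|g|\le G$ for a constant $G$ depending only on $\weight$ and the reaction vectors.

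For (ii), set $t:=\Delta/\uu(x)$, so that $\uu(x)\,v\log(1+t)=v\Delta+\uu(x)\,v\,(\log(1+t)-t)$ and hence $\zz(v,x)=g(v,x)+\uu(x)\,v\,(\log(1+t)-t)$. Since $|\log(1+s)-s|\le s^2$ for $|s|\le\tfrac12$, this remainder is controlled by $v\Delta^2/\uu(x)=(\h{v}{x}+g)^2/(v\uu(x))$, so the whole lemma reduces to a uniform bound on $\h{v}{x}^2/(v\uu(x))$ and to checking $|t|<\tfrac12$.

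This last estimate is the only genuinely global step and is where I expect the main difficulty, since $x_i$ ranges from $v^{-1}$ (where logarithms behave like $-\log v$) to arbitrarily large values. I would split $\h{v}{x}$ into the contribution of indices with $x_i\le 1$ or $i\in\PP^c$, which is $O(\log v)$ and thus gives $O((\log v)^2/v)\to 0$, and the contribution $\sum_{i:\,x_i>1}\crwi\log x_i$. For the latter I would use $(\log x_i)^2\le x_i\log x_i$ (valid for $x_i\ge 1$ because $\log x_i\le x_i$) together with $x_i(\log x_i-1)\ge\tfrac12 x_i\log x_i-\tfrac{e}{2}$, which yields $\sum_{i:\,x_i>1}\weight_i x_i\log x_i\le C\uu(x)$ (the indices with $x_i\le 1$ each cost at least $-\weight_i$, and $\uu(x)\ge 1$); by Cauchy--Schwarz this part of $\h{v}{x}^2$ is at most $C'\uu(x)$, contributing $O(1/v)$. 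The same two bounds give $|t|\le(C\log v+C'\sqrt{\uu(x)}+G)/(v\uu(x))\to0$ uniformly, so $|t|<\tfrac12$ for all $v\ge v_0$ once $v_0$ is large enough, justifying the expansion. Collecting the estimates produces a single $\zeta_*$ valid uniformly in $\PP$, $r$, $x$ and $v\ge v_0$, as claimed.
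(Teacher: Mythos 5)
Your proof is correct and follows essentially the same route as the paper's: both isolate the linear term $\h{v}{x}$ from $v[\uu(x+v^{-1}c^r)-\uu(x)]$ with a uniformly bounded correction coming from $(vx_i+c^r_i)\log(1+c^r_i/(vx_i))$, and both control the second-order remainder of the outer logarithm by bounding $\h{v}{x}^2/(v\,\uu(x))$ through a comparison of $(\log x_i)^2$ with $x_i(\log x_i-1)$. The only (cosmetic) difference is that the paper reduces that last bound to the single-variable supremum $\sup_{vy\ge 1}|\log y|^2/(v[y(\log y-1)+2])$ split at $y=v$, whereas you split at $x_i=1$ and use $(\log x_i)^2\le x_i\log x_i$ plus Cauchy--Schwarz.
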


 \begin{proof} Since the number of possible $\PP$ and $r$ is finite, it
  suffices to prove the claim for fixed $\PP$ and $r \in \R(\PP)$.
  To this end,
  we conveniently write the \abbr{rhs} of \eref{d:zetar} as
  \begin{equ}
   \frac{\uu^v(x + v^{-1} c^r)}{\uu^v(x)} = \pc{1 + \frac{f}{v}}^{v} =
   \exp \pq{\frac{f \uu(x) - v \uu(x) R(f/v)}{\uu(x)}}~,\end{equ}
   where $f := v[ \uu(x+ v^{-1} c^r) - \uu(x)]/\uu(x)$ and $R(y):=y-\log(1+y)$.
   We then define
   \begin{equ}\label{e:exponential}
    \zz(v,x) := g_r(v,x) - v \uu(x) R(f/v) \end{equ} where $
    g_r(v,x):=f \uu(x) - \h{v}{x}\,,
    $ and proceed to bound the terms on the \abbr{RHS} of \eref{e:exponential} separately. To this end, with
    $\psi(b;c):= (b+c)\log(1+c/b)$, we have
    when $\supp\{x\} \subseteq \PP$ that
    \begin{equ}
     g_r(v,x)
     = \sum_{i \in \PP} \jpp{a_i} \psi(v x_i;c^r_i)
     - \langle \crw, \onevec \rangle ~.
    \end{equ}
    Since $\psi(b;c)$ decreases in $b \ge \max(1,-c)$,
    it is easy to verify that $|g_r(v,x)|$ is uniformly bounded
    over $vx$ as in the statement of the lemma.

    With
    $R(y) \le 2 y^2$ when $y \ge -1/2$, we
    globally bound the remainder
    in \eref{e:exponential} upon showing that for some $v_0$ finite and
    all $v \ge v_0$
    \begin{equ}\label{e:f/v}
     v \uu(x) \Big(\frac{2 f}{v}\Big)^2 \le \frac{8 \h{v}{x}^2}{v \uu(x)}
     + \frac{8 g_r(v,x)^2}{v \uu(x)}~,
    \end{equ}
    is uniformly bounded above by $v_0$.
    Since $\uu(x)\ge 1$, the right-most term
    is obviously $\OO(1/v)$. Further, to
    globally bound the first term on the \abbr{rhs} of \eqref{e:f/v},
    it suffices to control
    \[
     \sup_{v y \ge 1} \Big\{ \frac{|\log y|^2}{v [y(\log y -1) + 2]}~\Big\}.
    \]
    For $y \in [v^{-1},v]$ this {quantity} is at most
    $(\log v)^2/v \to 0$
    as $v \to \infty$, whereas for $y \ge v \ge e^2$ it is at
    most $2\log y/(vy) \le 2 \log v/v^2 \to 0$ as $v \to \infty$.
    Hence, some finite $v_0$ will bound
    the \abbr{lhs} of \eqref{e:f/v} uniformly in $v \ge v_0$ and $x$ as stated.
    \end{proof}

    From \cref{r:noindicatoratboundary} and \lref{l:taylor}, we see that a comparison of $\langle w,\crw \rangle$ and $\langle w , c^r_\inn - c_\inn^{r_-} \rangle$ is needed for bounding summands of \eref{e:dividesums} from above, as was done in \eref{e:lyapunovdominance}.

    \begin{lm}\label{l:epscr}
     For any $\PP \subseteq \S$, \jpp{$\weight \in \Rr_{> 0}^d$ as in \dref{d:Pendo1}\,,} $(j,\ii)\in \Ip^*$, there exist
     $\Cepsilon \in (0,1)$ and $\Cc {\in [1,\infty)}$, such that if $c_* \delta_j \leq \Cepsilon \epsilon_j$, $\epsilon_j < 1/2$, then for all $w \in \WWedd$:
     \begin{myenum}
      \item $\dtp{w}{\crw} \leq c_* \quad\qquad\qquad\qquad\,$ for all
      $\quad r \in \R(\PP)\,$,
      \item $\dtp{w}{\crw} <  c_* \delta_j\; \qquad\qquad\qquad$ for all $\quad r \in \R(\PP)_- \cup \R(\PP)_0\,$, $\supp c^r \subseteq \PP\,$,
      \item $\dtp{w}{c_\inn^r- c_\inn^{r_-}} < c_* \delta_j\qquad\qquad$ for all $\quad r \in \R(\PP)_- \cup \R (\PP)_0$\,, $r_- \in \R(\PP)_-$\,,
      \item $\dtp{w}{c_\inn^{r}-c_\inn^{r_-}} <- \Cepsilon \epsilon_j\;\qquad\;\;$ for all $\quad r \in \R(\PP)_+$\,, $r_- \in \R(\PP)_-$\,,
      \item $\dtp{w}{\jpp{c^{r_-,\weight}}} <-  \Cepsilon \epsilon_j\;\qquad\qquad\,$ for all $\quad r_- \in \R(\PP)_-$\,, $\supp c^{r-} \subseteq \PP\,$,
      \item  $\dtp{w}{\crw} \leq -\Cc \dtp{w}{c_\inn^r-c_\inn^{r_-}}\;$ for $\qquad \; r, r_-$ as in (d)\,.
     \end{myenum}
    \end{lm}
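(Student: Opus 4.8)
The plan is to obtain (a)--(c) directly from the sign structure of the scalar products on the open dual $\F^*$ recorded in Remarks~\ref{r:duality}--\ref{r:dec-Fstar}, to promote the strict inequalities for the ``off-face'' products into the quantitative gaps (d)--(e) through a compactness argument on the finitely many faces, and finally to derive (f) by feeding the strong endotactic property into that same gap.

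Throughout I would fix $w\in\WWedd$ and choose $w'\in\F^*\setminus(\partial\F^*)^{\epsilon_j}$ with $\|w-w'\|_2<\delta_j$, so that $|\dtp{w}{g}-\dtp{w'}{g}|<c_*\delta_j$ for every vector $g$ with $\|g\|_2\le c_*$, where $c_*$ is as in \eqref{e:cstar-def}. Since $\|w\|_2=1$, Cauchy--Schwarz gives (a). For (c), note that $c_\inn^r,c_\inn^{r_-}\in\F$ whenever $r\in\R(\PP)_-\cup\R(\PP)_0$ and $r_-\in\R(\PP)_-$, so $\dtp{w'}{c_\inn^r-c_\inn^{r_-}}=0$ because $w'$ exposes $\F$; the perturbation bound then yields $\dtp{w}{c_\inn^r-c_\inn^{r_-}}<c_*\delta_j$. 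Likewise, every $r\in\R(\PP)_-\cup\R(\PP)_0$ with $\supp c^r\subseteq\PP$ has $\dtp{w'}{\crw}\le 0$ (strictly negative for $\R(\PP)_-$ by \rref{r:dec-Fstar}, zero for $\R(\PP)_0$), and (b) follows by the same perturbation.

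The core of the lemma is the quantitative gap behind (d)--(e). I would isolate the geometric fact that, for a fixed face and a linear functional $\ell$ that is non-negative on the closed spherical polytope $\overline{\F^*}$ and vanishes on $G:=\{\ell=0\}\cap\overline{\F^*}\subseteq\partial\F^*$, there is a constant $\Cepsilonn>0$ with $\ell(w')\ge\Cepsilonn\,\mathrm{dist}(w',\partial\F^*)$ for all $w'\in\F^*$. Away from $G$ this is clear since $\ell$ is bounded below there, while near $G$ linearity of $\ell$ gives a Hoffman-type bound $\ell(w')\ge\mathrm{const}\cdot\mathrm{dist}(w',G)\ge\mathrm{const}\cdot\mathrm{dist}(w',\partial\F^*)$; finiteness of the faces and of $\R$ makes $\Cepsilonn$ uniform. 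Applying this to $\ell=\dtp{\cdot}{c_\inn^{r_-}-c_\inn^r}$, which is positive on $\F^*$ for $r\in\R(\PP)_+$ by \rref{r:duality}, together with $\mathrm{dist}(w',\partial\F^*)\ge\epsilon_j$, gives $\dtp{w'}{c_\inn^{r_-}-c_\inn^r}\ge\Cepsilonn\epsilon_j$; applying it to $\ell=-\dtp{\cdot}{\crw}$ for $r_-\in\R(\PP)_-$ with $\supp c^{r_-}\subseteq\PP$, positive on $\F^*$ by \rref{r:dec-Fstar}, gives $-\dtp{w'}{c^{r_-,\weight}}\ge\Cepsilonn\epsilon_j$. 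Transferring back to $w$ and using the hypotheses $c_*\delta_j\le\Cepsilon\epsilon_j$ with $\Cepsilon:=\Cepsilonn/2$ and $\epsilon_j<1/2$ then yields (d) and (e).

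Finally (f), which I expect to be the main obstacle. At any $w_0\in G=\{\dtp{\cdot}{c_\inn^{r_-}-c_\inn^r}=0\}\cap\overline{\F^*}$ the complex $c_\inn^r$ is tied with $\F$ for the maximum of $\dtp{w_0}{\cdot}$, so $r\in\R(\PP)_{w_0}$; since the network is strongly $(\PP,\weight)$-endotactic by \lref{l:endotacticorpositive}, $r$ cannot be $(w_0,\weight)$-explosive, whence $\dtp{w_0}{\crw}\le 0$ for those $r$ with $\supp c^r_\outt\subseteq\PP$. Thus $\dtp{\cdot}{\crw}\le0$ on $G$, while $\phi_r:=\dtp{\cdot}{c_\inn^{r_-}-c_\inn^r}$ vanishes on $G$ and satisfies $\phi_r\ge\Cepsilonn\,\mathrm{dist}(\cdot,G)$ nearby, so $\dtp{w'}{\crw}\le c_*\,\mathrm{dist}(w',G)\le(c_*/\Cepsilonn)\phi_r(w')$ for $w'$ near $G$, whereas away from $G$ the quotient $\dtp{w'}{\crw}/\phi_r(w')$ is bounded (denominator bounded below, numerator at most $c_*$). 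Choosing $\Cc\ge1$ above this uniform supremum and transferring to $w$ proves (f). The delicate point is precisely the case $\supp c^r_\outt\not\subseteq\PP$, where endotacticity no longer pins down the sign of $\dtp{w_0}{\crw}$ on $G$; such reactions create a species absent from $\PP$ and are instead dominated through the $\log(v^{-1}c^r_i)$ entries of $\nabla_{r,v}\uu$ in \eqref{dfn:gradU-P}, so they are treated in \lref{l:pointingoutside} rather than by (f). Establishing the uniform constants $\Cepsilonn,\Cepsilon,\Cc$ and verifying the linear-in-distance lower bound near the vanishing face $G$ is the technical crux.
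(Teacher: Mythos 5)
Your proposal is correct and shares the paper's overall architecture --- (a)--(c) by perturbing from a nearby $w'\in\F^*$ and invoking Remarks~\ref{r:duality}--\ref{r:dec-Fstar}, then a uniform lower bound on the ``off-face'' linear functionals for (d)--(e), then a comparison constant for (f) --- but it reaches the central quantitative estimate by a different route. Where the paper proves \eqref{e:geo-bd} by an explicit spherical computation ($|\dtp{w}{\xi}|$ is the distance from $w$ to the hyperplane $H^\perp(\xi)$, which is disjoint from $\F^*$; for $j=0$ this is at least $\sin\epsilon_j>\epsilon_j/2$, and for $j\ge1$ one restricts to the carrier hyperplane $H_{j+1}(\F^*)$ at the cost of a dihedral-angle factor $\sin\gamma\ge2\gamma/\pi$), you invoke a Hoffman-type error bound $\ell(w')\ge \Cepsilonn\,\mathrm{dist}(w',G)$ for a linear functional nonnegative on $\overline{\F^*}$. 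That is valid, but you should route it through the polyhedral \emph{conic hull} (Hoffman applies to flat polyhedra, and $\F^*$ is a spherical polytope), and you should separately dispose of $G=\emptyset$ and of $j=d_\PP-1$, where no $\epsilon$-shrinking occurs and the bound must come from $\ell$ being bounded below at the single point $\F^*$ together with $\epsilon_j<1/2$. Your version buys brevity and uniformity over the finitely many faces and functionals; the paper's buys explicit constants in terms of the minimal face angles, which the authors use in their closing remark. For (f), the paper checks nonpositivity of $\dtp{\cdot}{\crw+\Cc' z}$ only on the finitely many extreme rays $\NN(\WWji)$, whereas you argue pointwise on $\overline{\F^*}$ via the zero set of $\dtp{\cdot}{z}$; for a linear functional on a polyhedral cone these are equivalent. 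Finally, you correctly isolate the one delicate point --- for $r\in\R(\PP)_+$ with $\supp c_\outt^r\not\subseteq\PP$ and $c_\inn^r$ on a facet adjacent to $\F$, endotacticity alone does not force $\dtp{n}{\crw}\le0$ --- and your resolution (such outputs only make the Lyapunov term more negative via \eqref{dfn:gradU-P}, so one may restrict $c_\outt^r$ to $\PP$) is exactly how the paper handles it in Corollary~\ref{r:monomialdomination}.
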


    \begin{proof} Fixing $\PP \subset \S$, \jpp{$\weight \in \mathbb R_{>0}^d$ as in \dref{d:Pendo1}}, $(j,\ii)\in \Ip^*$,
     we abbreviate throughout $\F^*=\WW(\PP)_{j,\ii}^*$, $\epsilon =\epsilon _j$,
     $\delta =\delta _j$ and $z=c_\inn^{r}-c_\inn^{r_-}$.
     Note that part (a) is merely the trivial inequality
     $$
     \dtp{w}{\crw} \leq \|\crw\|_2 \leq
     c_* \,.
     $$
     For any $w \in \Xed$ we have \begin{equ}
     \|w-w'\|_2 < \delta\quad \text{for some} \quad w' \in (\F^*)^{\epsilon,0} \subset \F^*~,\label{e:deltaargument}\end{equ}
     and part (b) similarly follows, as
     $\dtp{w'}{\crw} \le 0$ whenever
     $r \in \R(\PP)_- \cup \R(\PP)_0$
     and $\supp c^r \subseteq \PP$
     (see \dref{d:dissipative} and
     \rref{r:dec-Fstar}). Combining \eref{e:deltaargument}
     with the inequality
     $\dtp{w'}{z} \le 0$
     for any $w' \in \F^*$, $r \in \R(\PP)$ and
     $r_- \in \R(\PP)_-$ (from \rref{r:duality}), proves (c). Further,
     $\dtp{w'}{z}$ is negative
     whenever $r \in \R(\PP)_+$ and $r_- \in \R(\PP)_-$,
     with $\dtp{w'}{\jpp{c^{r_-,\weight}}}$ negative if in addition
     $\supp c^{r_-} \subseteq \PP$.
     Thus, considering $c_* \delta \le \Cepsilon \epsilon$ and the finite set
     \begin{equs}
      \Xi :=
      \big\{ z :
      r \in \R(\PP)_+, r_- \in \R(\PP)_-
      \big\}
      \bigcup \big\{ \jpp{c^{r_-,\weight}} : r_- \in \R(\PP)_-, \supp c^{r_-} \subseteq \PP
      \big\},
     \end{equs}
     parts (d) and (e) follow
     upon finding $K_2 \in (0,1)$ such that for
     $\epsilon>0$ and $\xi \in \Xi$,
     \begin{equ}\label{e:geo-bd}
      \inf_{w \in (\F^*)^{\epsilon,0}} \{ \, |\dtp{w}{\xi}| \,\}
      \ge 2 \Cepsilon \epsilon \,.
     \end{equ}
     Since $\Xi$ is finite, it suffices to establish
     \eqref{e:geo-bd} for each fixed $\xi \in \Xi$.
     Further scaling \abbr{wlog} such $\xi$ to be a unit vector, we can write
     \[
      |\dtp{w}{\xi}| =  \inf_{x \in H^\perp(\xi)}  \|w - x\|_2 := g(w) \,.
     \]
     Note that
     the hyperplane $H^\perp(\xi)$ perpendicular to $\xi \in \Xi$
     is disjoint of $\F^*$, or else we would have had
     $c_\inn^r \in \F$ for some $r \in \R(\PP)_+$ or
     $\jpp{c^{r_-,\weight}} \in \F$ for some $r_- \in \R(\PP)_-$ for which
     $\supp \{c^{r_-}\} \subseteq \PP$,
     in contradiction with our partition of $\R(\PP)$
     (see also \dref{d:dissipative}).
     Using this, we first consider the case $j=0$, where the cap
     $\F^*$ has a positive $(d_\PP-1)$-dimensional surface area.
     Then,
     as illustrated in \fref{f:geometry1}, we have
     for any $\epsilon<1/2$ and $w \in (\F^*)^{\epsilon,0}$, that
     \begin{equ}
      g(w) \geq
      \inf_{x \in \partial \F^*} \{
      \sin (\angle(w,x)) \} \ge
      \sin (\epsilon)> \epsilon/2 \,.
      \label{e:Hj1}\end{equ}
      \begin{figure}[t!]
       \centering
       \begin{subfigure}[b]{0.41\textwidth}
        \centering
        \def\svgwidth{.95\textwidth}
        \begingroup%
  \makeatletter%
  \providecommand\color[2][]{%
    \errmessage{(Inkscape) Color is used for the text in Inkscape, but the package 'color.sty' is not loaded}%
    \renewcommand\color[2][]{}%
  }%
  \providecommand\transparent[1]{%
    \errmessage{(Inkscape) Transparency is used (non-zero) for the text in Inkscape, but the package 'transparent.sty' is not loaded}%
    \renewcommand\transparent[1]{}%
  }%
  \providecommand\rotatebox[2]{#2}%
  \ifx\svgwidth\undefined%
    \setlength{\unitlength}{316.43317871bp}%
    \ifx\svgscale\undefined%
      \relax%
    \else%
      \setlength{\unitlength}{\unitlength * \real{\svgscale}}%
    \fi%
  \else%
    \setlength{\unitlength}{\svgwidth}%
  \fi%
  \global\let\svgwidth\undefined%
  \global\let\svgscale\undefined%
  \makeatother%
  \begin{picture}(1,1.07093073)%
    \put(0,0){\includegraphics[width=\unitlength,page=1]{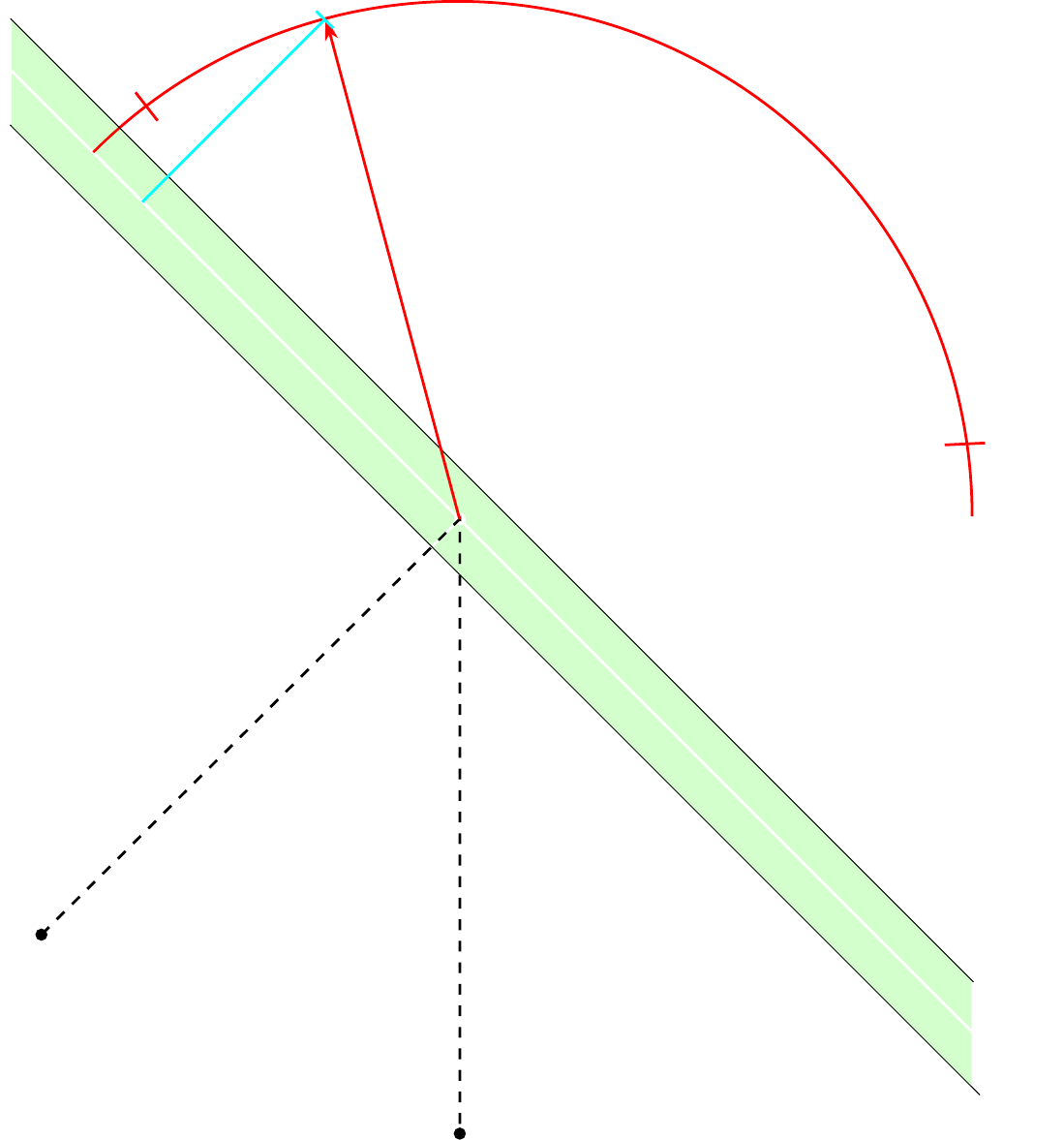}}%
    \put(0.77197396,0.94712039){\color[rgb]{0,0,0}\makebox(0,0)[lb]{\smash{$\color{red} \F^* \color{black}$}}}%
    \put(-0.00104929,0.66939714){\color[rgb]{0,0,0}\makebox(0,0)[lb]{\smash{$\color{blue} H^\perp(\xi)\color{black}$}}}%
    \put(0.25226603,0.34568936){\color[rgb]{0,0,0}\makebox(0,0)[lb]{\smash{$\xi$}}}%
    \put(0.37357376,0.86284342){\color[rgb]{0,0,0}\makebox(0,0)[lb]{\smash{$\color{red}w\color{black}$}}}%
    \put(0.0581736,0.13946617){\color[rgb]{0,0,0}\makebox(0,0)[lb]{\smash{$c_\inn^r$}}}%
    \put(0.44508149,0.01241224){\color[rgb]{0,0,0}\makebox(0,0)[lb]{\smash{}}}%
    \put(0.93553018,0.59811957){\color[rgb]{0,0,0}\makebox(0,0)[lb]{\smash{$\epsilon$}}}%
    \put(0,0){\includegraphics[width=\unitlength,page=2]{geometry1.pdf}}%
    \put(0.4634425,0.61258454){\color[rgb]{0,0,0}\makebox(0,0)[lb]{\smash{$\F$}}}%
    \put(0.66304457,0.44223245){\color[rgb]{0,0,0}\makebox(0,0)[lb]{\smash{$\color{green} (H^\perp(\xi))^{\epsilon/2}\color{black}$}}}%
    \put(0,0){\includegraphics[width=\unitlength,page=3]{geometry1.pdf}}%
  \end{picture}%
\endgroup%
        \caption{}
       \end{subfigure}
       $\qquad$
       \begin{subfigure}[b]{0.47\textwidth}
        \centering
        \def\svgwidth{.95\textwidth}
        \begingroup%
  \makeatletter%
  \providecommand\color[2][]{%
    \errmessage{(Inkscape) Color is used for the text in Inkscape, but the package 'color.sty' is not loaded}%
    \renewcommand\color[2][]{}%
  }%
  \providecommand\transparent[1]{%
    \errmessage{(Inkscape) Transparency is used (non-zero) for the text in Inkscape, but the package 'transparent.sty' is not loaded}%
    \renewcommand\transparent[1]{}%
  }%
  \providecommand\rotatebox[2]{#2}%
  \ifx\svgwidth\undefined%
    \setlength{\unitlength}{353.498368bp}%
    \ifx\svgscale\undefined%
      \relax%
    \else%
      \setlength{\unitlength}{\unitlength * \real{\svgscale}}%
    \fi%
  \else%
    \setlength{\unitlength}{\svgwidth}%
  \fi%
  \global\let\svgwidth\undefined%
  \global\let\svgscale\undefined%
  \makeatother%
  \begin{picture}(1,0.95864096)%
    \put(0,0){\includegraphics[width=\unitlength,page=1]{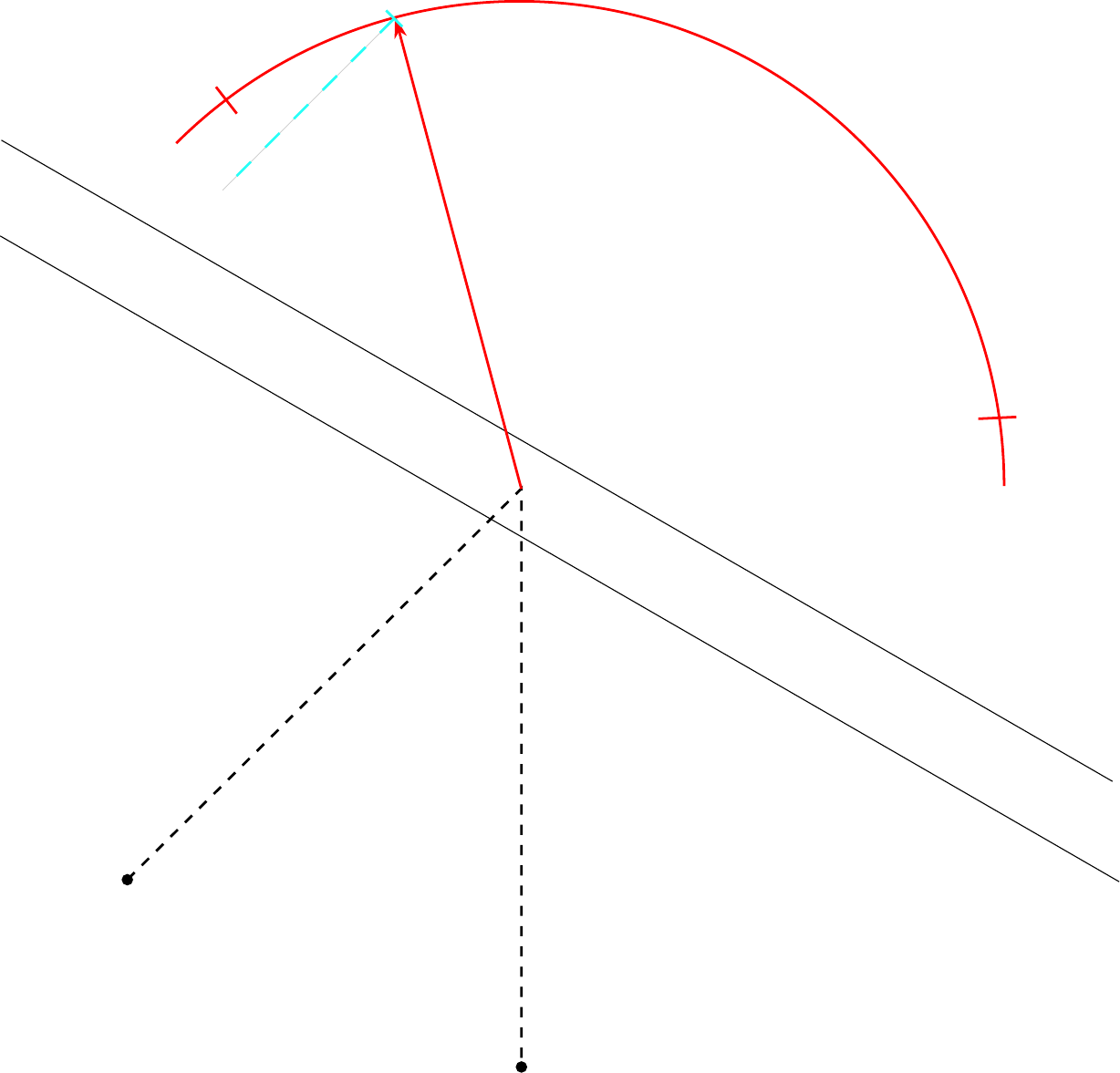}}%
    \put(0.76993495,0.84781244){\color[rgb]{0,0,0}\makebox(0,0)[lb]{\smash{$\color{red} \F^* \color{black}$}}}%
    \put(0.35402279,0.24518727){\color[rgb]{0,0,0}\makebox(0,0)[lb]{\smash{$\xi$}}}%
    \put(0.13097834,0.12484279){\color[rgb]{0,0,0}\makebox(0,0)[lb]{\smash{$c_\inn^r$}}}%
    \put(0.47731797,0.01111079){\color[rgb]{0,0,0}\makebox(0,0)[lb]{\smash{}}}%
    \put(0,0){\includegraphics[width=\unitlength,page=2]{geometry9.pdf}}%
    \put(0.68466034,0.45247598){\color[rgb]{0,0,0}\makebox(0,0)[lb]{\smash{$\color{green} (H^\perp(\xi))^{\epsilon/2}\color{black}$}}}%
    \put(0.06718421,0.54064341){\color[rgb]{0,0,0}\makebox(0,0)[lb]{\smash{$\color{blue} H^\perp(\xi)\color{black}$}}}%
    \put(0.47660834,0.56549911){\color[rgb]{0,0,0}\makebox(0,0)[lb]{\smash{$\F$}}}%
    \put(0.43622287,0.73813433){\color[rgb]{0,0,0}\makebox(0,0)[lb]{\smash{$\color{red}w\color{black}$}}}%
    \put(0,0){\includegraphics[width=\unitlength,page=3]{geometry9.pdf}}%
  \end{picture}%
\endgroup%
        \caption{}
       \end{subfigure}
       \caption{Representation of the geometric construction used in the proof of \eqref{e:geo-bd} for $j = 0$. The faces of $\WW(\PP)$ adjacent to $\F$ are denoted by dashed black lines, the dual $\F^*$ is drawn in red together with its $\epsilon$-boundaries and an element $w \in \F^*$ drawn as a solid red arrow.
        For a certain $\xi$ (solid black vector), we have drawn
        in green the $(\epsilon/2)$-neighborhood of the blue $H^\perp(\xi)$,
        with a light blue line-segment for the distance between
        $w$ and $H^\perp(\xi)$. That distance (a) equals or (b) is greater than
        $\inf_{x \in \partial \F^*} \{ \sin (\angle(w,x)) \}\,$ (denoted as a dashed blue line in (b)).}
       \label{f:geometry1}
      \end{figure}

      \noindent Next, if $1 \le j \le d_\PP-2$, then the unique $(d_\PP-j)$-dimensional hyperplane
      $H_{j+1}(\F^*)$ containing $0$ and all of $\F^*$, has a positive
      dihedral angle
      \begin{equs}
       \gamma := \angle \pc{H^{\perp}(\xi), H_{j+1}(\F^*)} \in (0,\pi/2]
      \end{equs}
      with
      $H^\perp(\xi)
      $, see \fref{f:geometry23} for an illustration.
      \footnote{where the angle between hyperplanes $B,B' \subset \Rr^{d_\PP}$ passing through the origin is
       \jpp{$\angle(B,B') := \arccos(\max\{\min\{|\langle b,b' \rangle|:b \in B, \|b\|_2 = 1\}:b' \in B', \|b'\|_2 = 1\})$}.}\\
      As depicted in \fref{f:geometry22}, we then bound
      $g(w)$ by further restricting $x$ to be on $ H_{j+1}(\xi)\,$. At the cost of a
      constant factor $\sin \gamma\,$, this reduces the problem to that of
      $j = 0$ treated above. Indeed, combining
      \eref{e:Hj1} with $\sin (\gamma) \geq 2\gamma/\pi$
      we obtain for any $\epsilon < 1/2$ and $w \in (\F^*)^{\epsilon,0}\,$ that
      \begin{equs}
       g(w) & = \sin (\gamma) \inf_{x \in H^\perp(\xi) \cap  H_{j+1}(\F^*)} \|w - x\|_2 \\ & \ge \frac{2 \gamma}{\pi}
       \inf_{x \in \partial \F^*} \{ \sin (\angle(w,x)) \} \ge
       \frac{\gamma \epsilon}{\pi} ~. \label{e:Hj2}
      \end{equs}

      \begin{figure}[t]
       \centering
       \begin{subfigure}[b]{0.39\textwidth}
        \centering
        \def\svgwidth{.95\textwidth}
        \begingroup%
  \makeatletter%
  \providecommand\color[2][]{%
    \errmessage{(Inkscape) Color is used for the text in Inkscape, but the package 'color.sty' is not loaded}%
    \renewcommand\color[2][]{}%
  }%
  \providecommand\transparent[1]{%
    \errmessage{(Inkscape) Transparency is used (non-zero) for the text in Inkscape, but the package 'transparent.sty' is not loaded}%
    \renewcommand\transparent[1]{}%
  }%
  \providecommand\rotatebox[2]{#2}%
  \ifx\svgwidth\undefined%
    \setlength{\unitlength}{379.94526978bp}%
    \ifx\svgscale\undefined%
      \relax%
    \else%
      \setlength{\unitlength}{\unitlength * \real{\svgscale}}%
    \fi%
  \else%
    \setlength{\unitlength}{\svgwidth}%
  \fi%
  \global\let\svgwidth\undefined%
  \global\let\svgscale\undefined%
  \makeatother%
  \begin{picture}(1,0.87637725)%
    \put(0,0){\includegraphics[width=\unitlength,page=1]{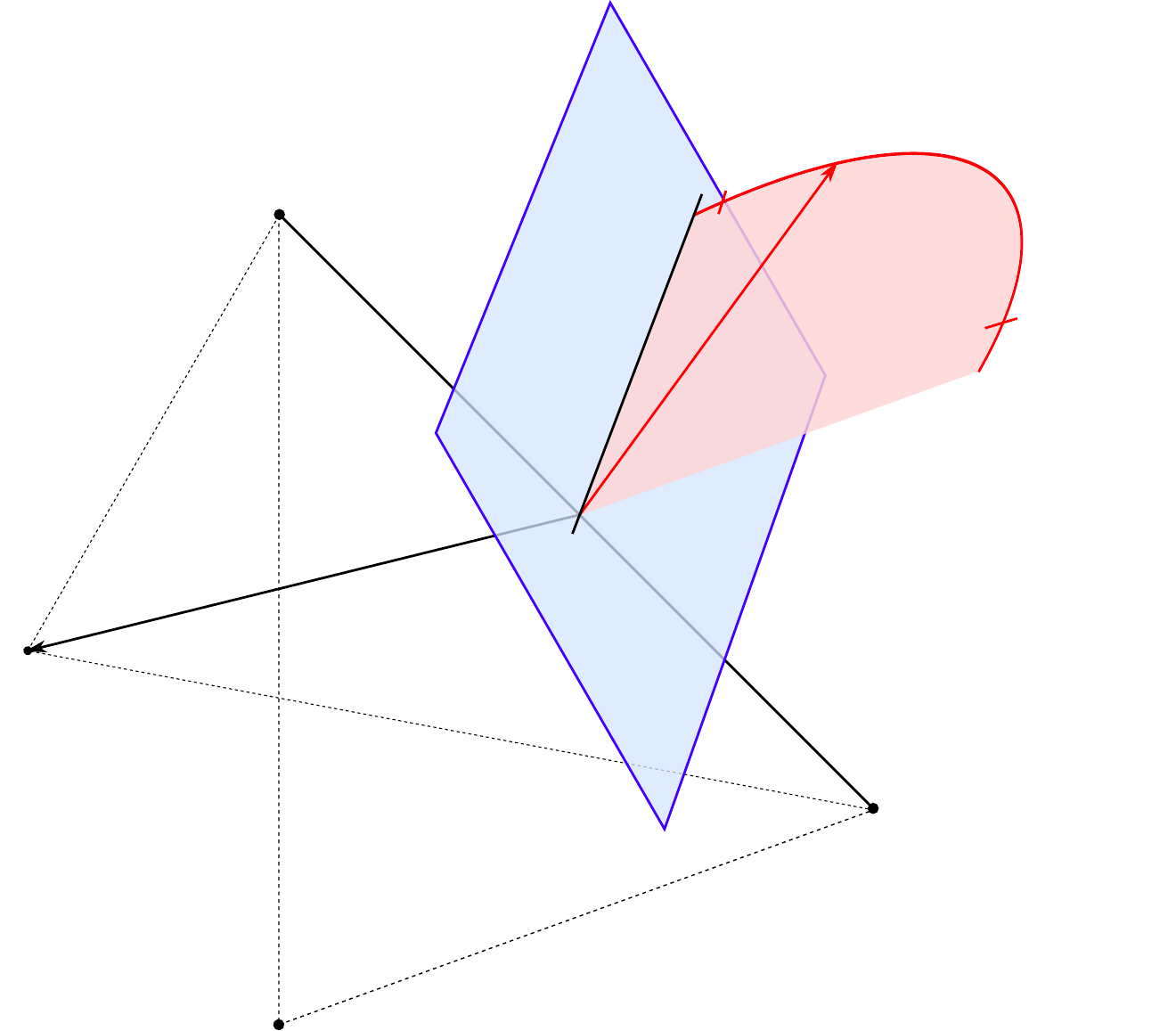}}%
    \put(0.82603888,0.75337306){\color[rgb]{0,0,0}\makebox(0,0)[lb]{\smash{$\color{red} \F^* \color{black}$}}}%
    \put(0.67130642,0.63923111){\color[rgb]{0,0,0}\makebox(0,0)[lb]{\smash{$\color{red}w\color{black}$}}}%
    \put(0.26864775,0.33028084){\color[rgb]{0,0,0}\makebox(0,0)[lb]{\smash{$\xi$}}}%
    \put(0.6701843,0.27376488){\color[rgb]{0,0,0}\makebox(0,0)[lb]{\smash{$\F$}}}%
    \put(-0.0048938,0.25394172){\color[rgb]{0,0,0}\makebox(0,0)[lb]{\smash{$c_\inn^r$}}}%
    \put(0,0){\includegraphics[width=\unitlength,page=2]{geometry2.pdf}}%
    \put(0.57156221,0.80264895){\color[rgb]{0,0,0}\makebox(0,0)[lb]{\smash{$\color{blue} H^\perp(\xi)\color{black}$}}}%
    \put(0.86101385,0.56727671){\color[rgb]{0,0,0}\makebox(0,0)[lb]{\smash{$\epsilon$}}}%
    \put(0,0){\includegraphics[width=\unitlength,page=3]{geometry2.pdf}}%
    \put(0.48959555,0.65901927){\color[rgb]{0,0,0}\makebox(0,0)[lb]{\smash{$\color{blue} \gamma \color{black}$}}}%
  \end{picture}%
\endgroup%
        \caption{}
        \label{f:geometry23}
       \end{subfigure}
       $\qquad$
       \begin{subfigure}[b]{0.53\textwidth}
        \centering
        \def\svgwidth{.95\textwidth}
        \begingroup%
  \makeatletter%
  \providecommand\color[2][]{%
    \errmessage{(Inkscape) Color is used for the text in Inkscape, but the package 'color.sty' is not loaded}%
    \renewcommand\color[2][]{}%
  }%
  \providecommand\transparent[1]{%
    \errmessage{(Inkscape) Transparency is used (non-zero) for the text in Inkscape, but the package 'transparent.sty' is not loaded}%
    \renewcommand\transparent[1]{}%
  }%
  \providecommand\rotatebox[2]{#2}%
  \ifx\svgwidth\undefined%
    \setlength{\unitlength}{403.19735508bp}%
    \ifx\svgscale\undefined%
      \relax%
    \else%
      \setlength{\unitlength}{\unitlength * \real{\svgscale}}%
    \fi%
  \else%
    \setlength{\unitlength}{\svgwidth}%
  \fi%
  \global\let\svgwidth\undefined%
  \global\let\svgscale\undefined%
  \makeatother%
  \begin{picture}(1,0.70284025)%
    \put(0,0){\includegraphics[width=\unitlength,page=1]{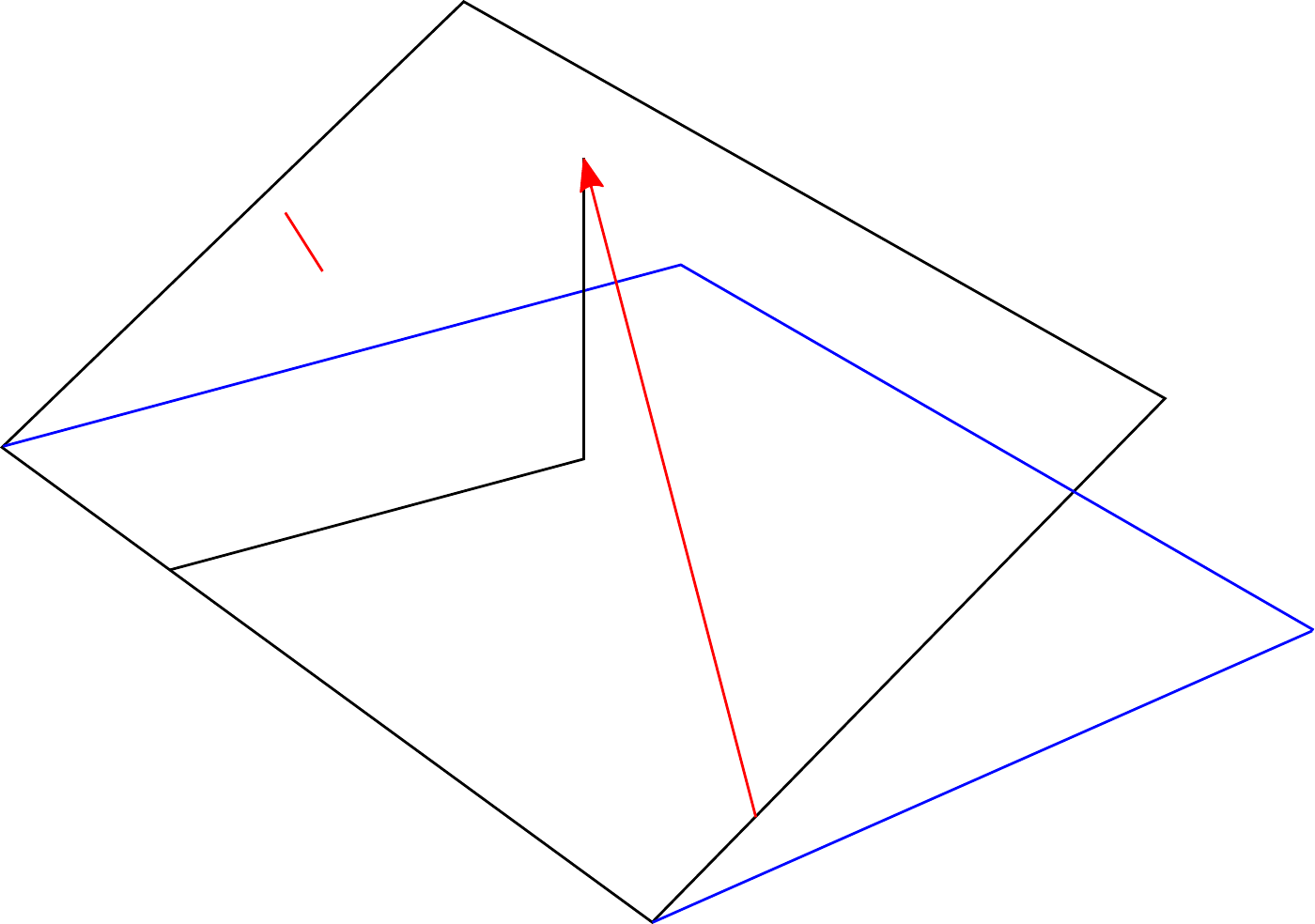}}%
    \put(0.05108292,0.45156609){\color[rgb]{0,0,0}\makebox(0,0)[lb]{\smash{$\epsilon$}}}%
    \put(0.43632646,0.30877153){\color[rgb]{0,0,0}\makebox(0,0)[lb]{\smash{$x^*$}}}%
    \put(0,0){\includegraphics[width=\unitlength,page=2]{geometry3.pdf}}%
    \put(0.70058002,0.01841994){\color[rgb]{0,0,0}\makebox(0,0)[lb]{\smash{$\color{blue} H^\perp(\xi)\color{black}$}}}%
    \put(0.33490958,0.58073864){\color[rgb]{0,0,0}\makebox(0,0)[lb]{\smash{$\color{red} \F^* \color{black}$}}}%
    \put(0,0){\includegraphics[width=\unitlength,page=3]{geometry3.pdf}}%
    \put(0.52080445,0.31382894){\color[rgb]{0,0,0}\makebox(0,0)[lb]{\smash{$\color{red}w\color{black}$}}}%
    \put(0.59585633,0.57410763){\color[rgb]{0,0,0}\makebox(0,0)[lb]{\smash{$ H_{j+1}(\F^*)$}}}%
    \put(0,0){\includegraphics[width=\unitlength,page=4]{geometry3.pdf}}%
    \put(0.28384268,0.3400746){\color[rgb]{0,0,0}\makebox(0,0)[lb]{\smash{$\color{blue} \gamma \color{black}$}}}%
  \end{picture}%
\endgroup%
        \caption{}
        \label{f:geometry22}
       \end{subfigure}
       \caption{Representation of the geometric construction used in the proof of \lref{l:epscr} for $j \neq 0$, with the same notations as \fref{f:geometry1}.
        The part of $H_{j+1}(\F^*)$ containing $\F^*$ is drawn in red and
        the dihedral angle $\gamma$ in dotted blue.
        Fig.~(b) is an inset
        of Fig.~(a), where the light blue distance of $w$ from
        $H^\perp(\xi) \cap H_{j+1}(\F^*)$, equals
        $\sin \gamma$ times the distance between $w$ and its
        projection $x^*$ onto $H^\perp(\xi)$.}
       \label{f:geometry2}
      \end{figure}
      \noindent Last, for $j=d_\PP-1$, the same reasoning can be applied by setting $(\F^*)^{\epsilon,0} = \F^*$. As $H_{j+1}(\F^*) \cap H^\perp(\xi) = 0$, we obtain $g(w) = \sin \gamma \geq 2 \gamma/\pi$, verifying \eqref{e:geo-bd} for $\rho_0 = \rho_0(K_2,\gamma)$ large enough.
      In conclusion, we have \eqref{e:geo-bd} with
      $\Cepsilon := 1/4$ when $j=0$ and
      $\Cepsilon := \gamma/(2\pi)$ when $j>0$,
      thanks to \eref{e:Hj1} and \eref{e:Hj2}, respectively.

      Part (f) asserts that
      $\dtp{w}{\crw + \Cc z}\leq 0$ on
      $(\F^*)^{\epsilon,\delta}$. Since
      $c_* \delta \le \Cepsilon \epsilon$, it suffices to show that
      \begin{equ}\label{e:partial}
       \dtp{w}{\crw + \Cc' z} \le 0 \qquad \forall\, w \in \F^*\,.
      \end{equ}
      Indeed, as $w \in (\F^*)^{\epsilon,\delta}$ is within $\delta$
      of some $w' \in \F^*$  for which
      $\dtp{w'}{z} \le - 2 \Cepsilon \epsilon$ (c.f. \eqref{e:geo-bd}),
      the argument leading to (d) then gives for $\Cc:=1+2\Cc'$
      \begin{equs}
       \dtp{w}{\crw + \Cc z} &\le \dtp{w'}{\crw + \Cc z} + c_* \delta (1+\Cc) \\
       &\le (\Cc - \Cc') \dtp{w'}{z} + \Cepsilon \epsilon (1+\Cc) \\
       &\le 2 \Cepsilon \epsilon (\Cc' - \Cc) + \Cepsilon \epsilon (1+\Cc) = 0 \,.
      \end{equs}
      Finally, to prove \eqref{e:partial} it suffices to check that
      the linear functional on its \abbr{lhs} is non-positive
      in the finite set of
      extreme directions $\NN(\WWji)$.
      This results from the fact that $r \in \R(\PP)_+$
      and \dref{d:Pendo1} of strongly endotactic \abbr{crn}: indeed
      either $z_n:= \dtp{n}{z} < 0$ (see \rref{r:duality}), or, if $c_\inn^r$ is on a $(d_\PP-1)$-dimensional
      face of $\WW(\PP)$ perpendicular to $n$, then $\dtp{n}{z} = 0$. In the latter case, however, by \dref{d:Pendo1} and our partition of $\R(\PP)$ we have
      $\dtp{n}{\crw} \le 0$. Thus, \eref{e:partial} holds
      for any $\Cc' \le \max \{ c_*/|z_n| : n \in \NN(\WWji), z_n \ne 0 \}$\,.
      \end{proof}

      Utilizing again \lref{l:key2}, we next bound
      the contribution of possibly dominant
      reactions with $\supp c_\outt^r \not \subset \PP$.
      \begin{lm}\label{l:pointingoutside}
       Choose $\Cv<\infty$. Then, \jpp{for any $\weight \in \Rr_{>0}^d$ as in \dref{d:Pendo1}} there exists $\rho_0$ such that under our standard conditions,
       for any $\PP \subseteq \S$, face $\F \in \WW(\PP)$ and
       $r \in \R(\PP)_-$ with $\supp c_\outt^r \not \subset \PP$, we have
       \begin{equ}
        \h{v}{\theta^w} < -\Cv \qquad \text{for all\;\; }\,
        w \in (\F^*)^{\epsilon,\delta} \,.
       \end{equ}
      \end{lm}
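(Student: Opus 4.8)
The plan is to expand $\h{v}{\theta^w} = \dtp{\nabla_{r,v}\uu(\theta^w)}{\crw}$ using the definition \eqref{dfn:gradU-P} together with $x_i = \theta^{w_i}$ for $i \in \PP$, and to split the sum according to the set $J := \supp\{c^r_\outt\} \cap \PP^c$, which is nonempty precisely because $\supp c^r_\outt \not\subset \PP$. This yields
\[
 \h{v}{\theta^w} = (\log\theta)\,\dtp{w}{\pi_\PP \crw} + \sum_{i \in J} \big(\log c^r_i - \log v\big)\, c^r_i \weight_i \,.
\]
For $i \in J$ one has $(c^r_\inn)_i = 0$ (since $r \in \R(\PP)$) and $(c^r_\outt)_i \ge 1$, so $c^r_i \weight_i > 0$; hence the second sum equals $A_r - S_r \log v$, where $A_r := \sum_{i \in J} \log(c^r_i)\, c^r_i \weight_i$ is a finite constant and $S_r := \sum_{i \in J} c^r_i \weight_i > 0$.

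The crux is to show that the first, possibly positive, term is negligible against $-S_r \log v$. I would fix once and for all a $\mu \in (0,1]$ small enough that $\mu \, \|\pi_\PP \crw\|_1 \le S_r/2$ for every reaction $r$ as in the statement (possible, as there are finitely many reactions and $\weight$ is fixed, the constraint being vacuous when $\|\pi_\PP \crw\|_1 = 0$) and an arbitrary $M < \infty$, then apply \lref{l:key2} with these $\mu, M$. For $i \in \PP$ with $c^r_i > 0$, estimate \eqref{e:minus-wk} gives $w_i \log\theta < \mu \log v - M$; for $i \in \PP$ with $c^r_i < 0$ we have $(c^r_\inn)_i > 0$ (because $(c^r_\outt)_i \ge 0$), so the dichotomy in \lref{l:key2} rules out the degenerate alternative and forces \eqref{e:plus-wk}, i.e.\ $-w_i \log\theta < \mu \log v - M$. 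Multiplying each inequality by $|c^r_i \weight_i|$ and summing over $i \in \PP$ gives
\[
 (\log\theta)\, \dtp{w}{\pi_\PP \crw} \le \big(\mu \log v - M\big)\, \|\pi_\PP \crw\|_1 \,.
\]

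Combining the two displays and using $\mu\,\|\pi_\PP \crw\|_1 \le S_r/2$,
\[
 \h{v}{\theta^w} \le \big(\mu \|\pi_\PP \crw\|_1 - S_r\big) \log v + A_r - M\|\pi_\PP \crw\|_1 \le -\tfrac12 S_r \log v + A_r \,.
\]
Under the standard conditions $\log v > \rho \ge \rho_0$, so the right-hand side tends to $-\infty$ as $\rho_0 \to \infty$, uniformly over the finite family of admissible pairs $(\PP,r)$ and over all $w \in (\F^*)^{\epsilon,\delta}$; taking $\rho_0$ large enough makes it smaller than $-\Cv$, as claimed.

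The main obstacle is the sign bookkeeping for $i \in \PP$: one must pair coordinates with $c^r_i > 0$ with the ``escape from infinity'' estimate \eqref{e:minus-wk} and those with $c^r_i < 0$ with the ``escape from the vanishing boundary'' estimate \eqref{e:plus-wk}, verifying in the latter case that $(c^r_\inn)_i > 0$ so that the degenerate branch of \lref{l:key2} is excluded. Note that the naive bound $\dtp{w}{\pi_\PP\crw} \le c_*$ from \lref{l:epscr}(a) is \emph{not} enough here, since in the regime where $w$ points towards a vanishing-concentration boundary one only controls $\log v$ by $\beta \log\theta$; it is exactly the per-coordinate comparison of \lref{l:key2}, combined with the strictly positive mass $S_r$ contributed by the output species lying outside $\PP$, that supplies the decisive negative drift.
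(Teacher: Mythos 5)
Your proof is correct and follows essentially the same route as the paper's: the same decomposition of $\h{v}{\theta^w}$ into the $\PP$-part and the strictly negative $-S_r\log v$ contribution from $\supp\{c^r_\outt\}\cap\PP^c$, the same coordinate-wise application of \lref{l:key2} with the sign dichotomy ($c^r_i>0$ paired with \eqref{e:minus-wk}, $c^r_i<0$ forcing $(c^r_\inn)_i\ge 1$ and hence \eqref{e:plus-wk}), and the same choice of $\mu$ small relative to $\|\crw\|_1$. The only (cosmetic) difference is that the paper fixes $M=(\kappa+\Cv)/\beta$ to conclude immediately, whereas you take $M$ arbitrary and absorb $\Cv$ by enlarging $\rho_0$ so that $\log v$ dominates.
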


      \begin{proof} Fixing $r \in \R(\PP)_-$ with $\supp \{c_\outt^r\} \not \subset \PP$,
       set $\kappa := \sum_{i \not\in \PP} \crwi \log c_i^r$ finite and
       $\beta := \jpp{\min_i \{\weight_i\} > 0}$. Then,
\jpp{since $\sum_{i \not\in \PP} \crwi \jpp{\ge \beta}$,}
       for any $w = w_\PP \in (\F^*)^{\epsilon, \delta}$,
       \begin{equ}\label{e:cv}
        \kappa - \h{v}{x}
        \geq \jpp{\beta} \log v - \dtp{w}{\crw} \log \theta \,.
       \end{equ}
       If $c_i^r =0$ for all $i \in \PP$ then $\dtp{w}{\crw}=0$ and we are done.
       Otherwise, setting $M=\jpp{(\kappa+\Cv)/\beta}$, note that if $c^r_i<0$
       then $(c^r_\inn)_i \ge 1$ and \eqref{e:plus-wk} applies, while
       \eqref{e:minus-wk} applies for all $i \in \PP$.
       \jpp{Setting $\mu := \jpp{\beta}/\|\crw\|_1 \in (0,1]$\,,} we thus get from \eqref{e:cv}
       \[
        \kappa - \h{v}{x} \ge  \sum_{i \in \PP} |\crwi| \pc{ \mu \log v
         - \sign (\crwi) \jp{w_i} \log \theta}
        > M \sum_{i \in \PP} |\crwi| \ge \kappa + \Cv~.
       \]
       We can find $\rho_0(\Cv)$ large enough to apply for all
       faces, reactions and species.
      \end{proof}

      We next
      combine \cref{r:noindicatoratboundary},
      \lref{l:taylor} and \lref{l:epscr} to
      control the contribution
      to \eref{e:dividesums} by the
      reactions in $\R(\PP)_+ \cup \R(\PP)_0$.
      \begin{coro}\label{r:monomialdomination}
       \jpp{Fix $\weight \in \Rr_{>0}^d$ as in \ref{d:Pendo1} and} suppose $C_{2j+1} \ge c_* C_{2j}/\Cepsilon$ as in \lref{l:epscr}.
       Then for some $\Cmonomial=\Cmonomial(\Clambda,\Cc,\zeta_*)$ finite,
       any $\PP \subseteq \S$, $(j,\ii) \in \Ip^*$ and
       $r_- \in \R(\PP)_-$, under our standard conditions
       \begin{equ}
        \sum_{r \in \R(\PP)_+ \cup \R(\PP)_0}
        \frac{\Lambda_{r,v}(x)}{\Lambda_{r_-,v}(x)} {\qq{r}{x}}
        < m \Cmonomial e^{2 c_* C_{2j}}
        \,.\label{e:monomial2}
       \end{equ}
      \end{coro}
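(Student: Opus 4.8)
The plan is to bound each of the at most $m$ summands in \eref{e:monomial2} individually. Since $e^{2c_* C_{2j}}\ge 1$, it suffices to bound every summand by a constant multiple of $e^{2c_* C_{2j}}$ depending only on $\Clambda,\Cc,\zeta_*$. I fix a face $\F$, its dual $\F^*$, and $w\in(\F^*)^{\epsilon_j,\delta_j}$ as in \pref{p:lyap}; note that the standing hypothesis $C_{2j+1}\ge c_* C_{2j}/\Cepsilon$ is exactly the precondition $c_*\delta_j\le\Cepsilon\epsilon_j$ of \lref{l:epscr}. The three inputs are: the monomial bound $\Lambda_{r,v}(x)/\Lambda_{r_-,v}(x)\le\Clambda\,\theta^{\dtp{w}{c_\inn^r-c_\inn^{r_-}}}$ of \cref{r:noindicatoratboundary}; the representation $\qq{r}{x}=\uu(x)\,[\exp\{(\h{v}{x}+\zz(v,x))/\uu(x)\}-1]$ of \lref{l:taylor}, with $|\zz|\le\zeta_*$; and the geometric estimates of \lref{l:epscr}. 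Writing $g_u(a):=u(e^{a/u}-1)$, so that $\qq{r}{x}=g_{\uu(x)}(\h{v}{x}+\zz)$, one checks that $g_u$ is increasing in $a$ and decreasing in $u\ge 1$.

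A preliminary reduction removes the reactions whose output leaves $\PP$. For every $r$ the coordinates $i\in\PP$ contribute $\sum_{i\in\PP}w_i\log\theta\,\crwi=\dtp{w}{\crw}\log\theta$ to $\h{v}{x}$, whereas each $i\in\supp c_\outt^r\cap\PP^c$ contributes $\log(v^{-1}c^r_i)\,\crwi<0$ (as $c^r_i>0$, $\weight_i>0$ and $v$ is large). Dropping these nonpositive terms and using monotonicity of $g_u$ in its first argument, I may replace $\h{v}{x}$ by $\dtp{w}{\crw}\log\theta$ throughout; this is a valid upper bound for each (nonnegatively weighted) summand, so outputs leaving $\PP$ need no separate analysis. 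For $r\in\R(\PP)_0$ the exponent then stays bounded: \lref{l:epscr}(b) gives $\dtp{w}{\crw}\log\theta<c_*\delta_j\log\theta=c_* C_{2j}$, so $g_{\uu(x)}\le g_1$ yields $\qq{r}{x}<e^{c_* C_{2j}+\zeta_*}$, while \lref{l:epscr}(c) bounds the monomial factor by $\Clambda e^{c_* C_{2j}}$; hence each such summand is at most $\Clambda e^{\zeta_*}e^{2c_* C_{2j}}$.

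The crux is $r\in\R(\PP)_+$, where $\dtp{w}{\crw}$ may be a positive constant, so $\h{v}{x}$ grows linearly in $\log\theta$ and the crude bound $g_{\uu(x)}\le g_1$ is hopeless. Instead I keep $u=\uu(x)$ and use $g_u(a)\le a\,e^{a/u}$ for $a>0$. Writing $b:=\dtp{w}{\crw}$, $L:=\log\theta$ and multiplying by the monomial factor, a positive summand is at most $\Clambda(bL+\zeta_*)\exp\{L(\dtp{w}{c_\inn^r-c_\inn^{r_-}}+b/\uu(x))+\zeta_*/\uu(x)\}$. By \lref{l:epscr}(f), $\dtp{w}{c_\inn^r-c_\inn^{r_-}}\le-b/\Cc$, so the bracket is $\le b(1/\uu(x)-1/\Cc)$. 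The decisive fact is that $\uu(x)$ is large under the standard conditions: $\|x\|_1>\rho_0$ forces $\theta^{w_*}\ge\|x\|_1/d>\rho_0/d$ for $w_*:=\max_i w_i>0$, whence the dominant term of \eref{e:lyapunov1} gives $\uu(x)\ge 1+(\min_i\weight_i)\,\theta^{w_*}(w_*\log\theta-1)\ge 2\Cc$ once $\rho_0$ is enlarged (uniformly in $w$). Then $1/\uu(x)-1/\Cc\le-1/(2\Cc)$, and for $b>0$ the summand is at most $\Clambda e^{\zeta_*}(bL+\zeta_*)e^{-bL/(2\Cc)}$, which as a function of $t=bL\ge0$ is bounded by a finite $K(\Cc,\zeta_*)$; the subcase $b\le0$ (where $bL+\zeta_*\le\zeta_*$ and $\theta^{\dtp{w}{c_\inn^r-c_\inn^{r_-}}}\le1$) is immediate.

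Collecting the two cases and summing the at most $m$ terms yields \eref{e:monomial2} with $\Cmonomial=\Cmonomial(\Clambda,\Cc,\zeta_*)$. The main obstacle is precisely the $\R(\PP)_+$ estimate: one cannot bound $\qq{r}{x}$ and the monomial factor separately, but must retain the size of $\uu(x)$ and feed \lref{l:epscr}(f) into the exponent so that the monomial suppression dominates the (possibly exponentiated) Lyapunov growth; guaranteeing $\uu(x)\ge 2\Cc$ uniformly in $w$ is what forces the enlargement of the lower bound $\rho_0\ge 3d$.
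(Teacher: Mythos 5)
Your proof is correct and follows essentially the same route as the paper: the same reduction of $\h{v}{x}$ to $\dtp{w}{\crw}\log\theta$ by dropping the negative out-of-$\PP$ contributions, the same use of \cref{r:noindicatoratboundary}, \lref{l:taylor} and \lref{l:epscr}(b),(c),(d),(f), and the same key mechanism of exploiting $\uu(x)\ge\text{const}\cdot\Cc$ so that the monomial decay $\theta^{\dtp{w}{z}}$ absorbs the exponentiated Lyapunov growth for $r\in\R(\PP)_+$. The only (cosmetic) difference is bookkeeping: the paper bounds the linear prefactor by an exponential, $f_r e^{f_r/4\Cc}<4\Cc e^{f_r/2\Cc}$, and cancels it exactly against $\theta^{\dtp{w}{z}}$, whereas you keep the prefactor and bound $(t+\zeta_*)e^{-t/(2\Cc)}$ over $t\ge 0$.
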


      \begin{proof} From \eqref{dfn:gradU-P} we see that
       a component outside $\PP$ with $(c^r_\outt)_i > 0$
       contributes
       negatively to $\h{v}{x}$ if $v \ge \max(v_0,c_* e^{2 \zeta_*})$.
       In other words, for $\rho_0$ large enough, restricting $c_\outt^r$ to $\PP$
       can only increase the value of {$\qq{r}{\cdot}$}. Therefore,
       if $\supp \{ c_\outt^r \} \not\subseteq \PP$,
       we consider instead \eref{e:monomial2} for $c^r_\outt$
       restricted to $\PP$. Hence, \abbr{wlog} we set $\PP = \S$ and
       having then
       $\h{v}{x} = \dtp{w}{\crw}\log \theta$, we can write
       $
       \h{v}{x} + \zz(v,x) \le
       2 \max(\dtp{w}{\crw}\log \theta,\zeta_*) =: f_r(x,\zeta_*)$.
       Furthermore, since $\uu(\cdot)$ has compact level sets,
       for large enough $\rho_0$ we have $\uu(x) \ge 4 \Cc$.
       Consequently, since $e^y-1 \le \max(y,0) e^y$ combining \eref{e:Q} and \eref{d:zetar} we bound Lyapunov terms from above by
       \begin{equs}
        \qq{r}{x} &= \uu(x)\pc{\exp\pq{\frac{\h{v}{x} + \zz(v,x))}{\uu(x)}}-1} \\
        &\le  f_r(x,\zeta_*) e^{f_r(x,\zeta_*)/4 \Cc} < 4 \Cc e^{f_r(x,\zeta_*)/2\Cc}~.\label{e:lyapunovbound1}
       \end{equs}
       We proceed to obtain \eref{e:monomial2} by considering reactions in $\R_+$ and $\R_0$ separately.
       For $r \in \R_+$, by \eref{e:lyapunovbound1} and \lref{l:epscr} (f) we have
       \begin{equ}\label{e:bd-Qrv}
        {\qq{r}{x}} < 4 \Cc \exp \pq{\max(-\dtp{w}{z} \log \theta, \zeta_*/\Cc)}
        \,,
       \end{equ}
       where $z:=c^r_\inn-c^{r_-}_\inn$.
       Combining this with \cref{r:noindicatoratboundary} and the inequality $\theta^{\dtp{w}{z}} \le 1$ from \lref{l:epscr} (d), we obtain for $r \in \R_+$
       \begin{equ}\label{e:boundr1}
        \frac{\Lambda_{r,v}(x)}{\Lambda_{r_-,v}(x)} {\qq{r}{x}} \leq \Clambda
        \theta^{\dtp{w}{z}} {\qq{r}{x}} <
        4 \Clambda \Cc e^{\zeta_*} =: \Cmonomial
        \,.
       \end{equ}
       For $r \in \R_0$, by the same reasoning, now using
       that $\delta_j \log \theta = C_{2j}$ and
       \lref{l:epscr} (b) and (c),
       instead of \lref{l:epscr} (f) and (d), respectively,
       we arrive at
       \begin{equs}
        \frac{\Lambda_{r,v}(x)}{\Lambda_{r_-,v}(x)} {\qq{r}{x}} <
        \Cmonomial e^{2 c_* C_{2j}}
        \,.
        \label{e:boundr0}
       \end{equs}
       Finally, since there are at most $m$ reactions, \eqref{e:boundr1} and \eqref{e:boundr0} imply \eqref{e:monomial2}.
      \end{proof}

      \subsubsection{Proof of \tref{t:thm}}
      Fixing $\PP \subseteq \S$\jpp{, $\weight \in \Rr_{>0}^d$ as in \dref{d:Pendo1}} and $(j,\ii) \in \Ip^*$,
      we consider the open set $(\F^*)^{\epsilon_j,\delta_j}$
      for the spherical image $\F^*$ of the face
      $\F = \WW(\PP)_{j,\ii}$ and
      proceed to prove \eref{e:dividesums} under our standard conditions,
      which by \rref{r:proofequivalence} suffices for proving \tref{t:thm}.
      Requiring that
      \begin{equ}\label{e:Cj-prelim}
       \max(2 \zeta_*,c_* C_{2j}) \le \Cepsilon C_{2j+1} \,,
      \end{equ}
      we have already bounded in \cref{r:monomialdomination}
      the \abbr{lhs} of \eref{e:dividesums}, so proceed to handle
      $r_- \in \R(\PP)_-$, starting with the case where
      $\supp \{c^{r_-}_\outt\} \subseteq \PP$.
      For such a reaction we have by
      \lref{l:taylor}, \lref{l:epscr} (e) and our choice of $\epsilon_j$ that
      \begin{equs}
       {\qq{r_-}{x}} &
       \leq \uu(x) \Big\{ e^{(\dtp{w}{\jpp{c^{r_-,\weight}}} \log \theta + \zeta_*)/\uu(x)}-1 \Big\}
       \\ &
       \leq \uu(x) \Big\{ e^{-\Cepsilon C_{2j+1}/(2\uu(x))}-1\Big\} \,.
      \end{equs}
      Further choosing $\rho_0(C_{2j+1})
      $ large enough to have $2 \uu(x) \ge \Cepsilon C_{2j+1}$ whenever
      $\|x\|_1 \ge \rho_0$ and using that $e^{-y} -1 \le -y/2$
      for $y \in [0,1]$, we deduce that for such a reaction
      \begin{equ}
       { \qq{r_-}{x}} \leq - \frac{\Cepsilon}{4} C_{2j+1} \,.
       \label{e:boundr-}
      \end{equ}
      Considering \lref{l:pointingoutside} for $\Cv = K_2 C_{2j+1}$
      extends \eqref{e:boundr-} to $r_- \in \R(\PP)_-$ with
      $\supp \{ c_\outt^{r_-} \} \not \in \PP$, provided
      $\rho_0(\Cv)$ is large enough. In particular, this
      proves the negativity of {$\qq{r_-}{x}$} which is
      necessary in order for \eref{e:dividesums} to imply \pref{p:lyap}.
      Combining \eqref{e:monomial2} and \eref{e:boundr-} we conclude that
      \begin{equs}
       \sum_{r \in \R(\PP)_+ \cup \R(\PP)_0} \frac{\Lambda_{r,v}(\theta^w)}
       {\Lambda_{r_-,v}(\theta^w)} {\qq{r}{\theta^w}}
       < - {\qq{r_-}{\theta^w}} \,,
      \end{equs}
      provided that for $j=0,1,\ldots,d-1$,
      \begin{equ}\label{e:Cj-choice}
       4 m \Cmonomial e^{2 c_* C_{2j}} \le \Cepsilon C_{2j+1}\,.
      \end{equ}
      Noting that $2 \zeta_* \le 2 e^{\zeta_*} \le \Cmonomial$ and
      $c_* C_{2j} \le e^{2 c_* C_{2j}}$, inequality \eqref{e:Cj-choice}
      guarantees that our preceding requirement \eqref{e:Cj-prelim} also holds when \jp{$c_* C_{2j} \ge 2 \zeta_*$}.
      In particular, \eqref{e:Cj-choice} allows us to set, $K_0$ in \dref{def:416} as
      \begin{equ}\label{e:K0-choice}
       K_0 := 4 m \Cmonomial/\Cepsilon\,.
      \end{equ}
      Combining this result with the negativity of {$\qq{r_-}{x}$} shown in \eref{e:boundr-} proves \eref{e:lyap} under our standard conditions.
      In case the reader worries about the exact order in which the various constants appear in this proof, we start by setting all those constants that depend exclusively on the structure of the network at hand, \ie $\Clambda$,
      $\zeta_*$, $v_0$, $\Cepsilon$, $\Cc$, $\Cmonomial$ \jpp{and on the chosen $\weight \in \Rr_{>0}^d$}\,. These
      determine $K_0$ by \eqref{e:K0-choice} and
      consequently all the constants $\{C_j\}$ in \eref{e:choiceofdeltas}
      (starting from $C_{0}= 2 \jp{\zeta_*/c_*}$ to satisfy \eref{e:Cj-prelim} with \eref{e:Cj-choice}).
      We then consider
      \lref{l:pointingoutside} for
      $\Cv = \Cepsilon C_{2j+1}$ and $j=0,\ldots,d_\PP-1$, as well as
      all other places where $\rho_0$ is to be enlarged as a function of all the above.
      Finally, for any $\rho > \rho_0$ we set $v^*(\rho):= e^{\rho}$, which
      in light of \rref{r:proofequivalence}, completes the
      proof of \tref{t:thm}. \qed

      \jpp{The asymptotic stability of the solutions to the \abbr{ode}s \eqref{e:ma} for the class of \abbr{crn}s introduced in \dref{d:Pendo1} can be proven by arguments similar to those presented above, extending the results of \cite{gopal13} to this larger class of networks.}

      \begin{remark}
       Closer inspection of the proof reveals that the constants $c_*$ and
       $K_0$ in \dref{def:416} only depend on the reaction rate constants $\{k_r\}$, the dimension $d$, the minimal angles
       of the faces of the polyhedron, \jpp{the chosen vector $\weight$ from \dref{d:Pendo1}} and the maximal value of
       $\|c\|_2$ for $c \in \C$\,. Our bounds on $\rho_0$ and $\rho$ are therefore universal and only depend on $\{k_r\}$ and on the geometry of the network at hand.
      \end{remark}

      \section{Wentzell-Freidlin theory: Some remarks}

      In this section and the next we deal with some \emph{local}
      questions. While large deviations theory is interesting both at
      large and small concentrations, there is of course the question of what
      kinds of transitions between stable regimes are possible. For such
      questions, \abbr{wf} theory is adequate. Here, we make contact
      between these two aspects.

      The scope of large deviations estimates can be extended from finite to
      infinite time intervals through \abbr{wf} theory \cite{fw98}.
      That framework allows for the asymptotic estimation of the exit time $\tau_{\mathcal D} := \inf\{t~:~X_t^v \not \in \mathcal D\}$ for sufficiently regular compact domains $\DD \subset \RR^d$ (see \cite[Ass.~A.3]{agazzi17}), transition times between attractors
      and invariant measure densities.
      The relevant quantity for such estimates is
      given by the \abbr{wf}-\emph{quasipotential}
      \begin{equ}
       \VV_{\mathcal D}(A,B) := \inf_{x \in A, y \in B} \inf_{\substack{{T > 0,~z \in AC_{0,T}(\mathcal D):} \\ z(0) = x,~z(T) = y}} \; I_{x,t}(z)~,
       \label{e:wfqp}
      \end{equ}
      for a fixed domain $\mathcal D$, any pair of sets $A, B \subset \mathcal D$  and $I(\cdot)$ of \eref{e:grf}.
      This quantity defines a notion of distance between attractors. This can be used to define an equivalence relation between attractors: For a fixed domain $\mathcal D$ and a pair of attractors $A,B$ are equivalent ($A \sim_{\mathcal D} B$) if $\VV_{\mathcal D}(A,B) = \VV_{\mathcal D}(B,A) = 0$\,.

      We will abstain to repeat here the ideas and  {remaining} definitions of \cite{fw98}. An interesting
      question is to find criteria where the fundamental assumptions of this theory can
      really be verified. Indeed, these conditions (summarized in points $(a)$-$(c)$ of \aref{a:2} below) speak to the post-asymptotic dynamics of the system under study,
      and there does not appear to be both simple and general conditions allowing to control, even for the relatively limited class of polynomial dynamical systems, the following necessary assumption:

      \begin{axx}[{{\cite[Ass. A4]{agazzi17}}}]
       There exist $\ell$ compact sets $K_i \subset \mathcal D$ such that:
       \begin{myenum}
        \item every $\omega$-limit set of \eref{e:ma} lying entirely in $\mathcal D$ is
        fully contained within one $K_i$\,,
        \item for any $x \in K_i$ we have $x \sim_{\mathcal D} y$ if and only if $y \in K_i$.
        \item  for all $K_i$ , the set $K_j$ minimizing $\VV_{\mathcal D}(K_i,K_j)$ is unique.
        \item $\text{Co}\{{c}^r\}_{r \in \R} = \Rr^d$.
       \end{myenum}
       \label{a:2}
      \end{axx}

   As shown in \cite{agazzi17}, this assumption, together with \aref{a:ase},
   yield the large deviations estimates on exit times $\tau_\DD$, under
   the mild regularity properties of $\DD$ from \cite[Ass.~A.3]{agazzi17}.

      \rmk{We are not aware of a general condition on an arbitrary polynomial vector
       field for which the points $(a)$ and $(b)$ in \aref{a:2} are satisfied. These points of our assumption form a weaker version of Hilbert's sixteenth problem (see \cite{hilbert16}), whereby having finitely many compact sets $K_i$ separated by finite potential barriers tolerates the existence of infinitely many attractors. Of course, in
       \emph{two} dimensions, the Poincar\'e-Bendixon theory \cite{bendixson01}
       gives the necessary information. This is also related to the
       fundamental work of Feinberg \cite{feinberg87} who actually spells out
       conditions which guarantee that the system is on a low-dimensional
       space.
       It would be interesting to see if the results in \cite{feinberg87} can be expanded in order to account for this kind of dynamical behavior.
      }


      The relevant quantity for the exponential in $v$ rate of growth of $\tau_\DD$, if
      $\DD$ contains one stable attractor $A$, is given by the quasipotential $\VV_{\mathcal D}(A, \partial \mathcal D)$ introduced in \eref{e:wfqp}. This quantity can be generalized to account for multiple attractors within $\DD$ and for estimates of invariant measure densities and transition times between attractors are also based on this quantity. However, such estimates are established in \cite{fw98} under assumption of compact phase space, that we lack here. Solely under our weaker assumptions, only concentration estimates on the transition times between attractors are possible. In particular, no first- or higher moment estimates of the distribution of transition times between attractors, nor invariant measure estimates are possible, in general, under the sole assumptions given here.
      The problem is that certain integrability of the hitting time of
      a compact {set} is needed for proving such results. Indeed, \cite{proj1} gives examples of \abbr{ase} networks not satisfying the integrability condition.

      \section{Bistable behavior}

      In this section we introduce an example \abbr{crn} displaying bi-stable behavior in order to concretely illustrate the results obtained in the previous sections, stressing in particular the extent to which they can or cannot be used.

      To stress the generality of the theory developed in this paper, the example introduced in this section has been chosen to display, in one of its attractors, a dynamic behavior that is more exotic than the one observed in examples of bi-stable systems studied in the literature \cite{qian00,wilhelm09}. Indeed, we have developed our example \abbr{crn} based on a system that is known, for a certain choice of reaction rate constants, to display chaotic behavior. This system is inspired by the well known Belusov-Zhabotinski reaction, a naturally occurring set of chemical reactions displaying chaotic and periodic behavior that can be modeled by the following set of chemical reactions \cite{chaos}:
      \begin{equ} \label{e:chaos}\emptyset \xrightharpoonup{k_0} X \xrightharpoonup{k_1} Y \xrightharpoonup{k_2} Z \xrightharpoonup{k_3} \emptyset\,, \quad Z \xrightharpoonup{k_4} X + Z\,, , \quad X + 2Y \xrightharpoonup{k_5} 3Y~.\end{equ}
      Despite the fact that the minimality of this system within the set of chemical reaction systems with chaotic dynamics in a non-vanishing region of parameter space has not been proven, some features of this model are strictly necessary (and therefore, in some sense, minimal) for the expected dynamic behavior to occur. For instance, the Poincar\'e-Bendixson theorem \cite{bendixson01} significantly limits the set of possible dynamical landscapes in 2 dimensions, ruling out among other things chaotic trajectories in that dimension and implying the necessity of a phase space of dimension $\geq 3$---as the one of \eref{e:chaos}---for the observation of chaos.

      To induce bi-stability in our model we add a new species, whose interactions are described by the simplest \abbr{crn} displaying bi-stability \cite{wilhelm09}, universally referred to as the Schl\"ogl model \cite{schlogl}:

      \begin{equ} \label{e:schlogl} \emptyset \xrightleftharpoons[k_6]{k_7} W \,,\quad 2W \xrightleftharpoons[k_{8}]{k_{9}} 3W\,.\end{equ}

      The two completely independent systems \eref{e:chaos} and \eref{e:schlogl} can now be coupled through some reactions involving species form both sides:

      \begin{equ} \label{e:coupling}
       W {\xrightharpoonup{k_{10}}} X + W\,,\qquad Z + W {\xrightharpoonup{k_{11}}} X + Z + W~.
      \end{equ}

      The choice of these reactions is not casual: As it can be seen in \eref{e:maexample}, the addition of these two reactions effectively shifts the reaction rates constants $k_0$ and $k_{ 4}$ by the $w$-dependent amounts $
      k_{{10}} w$ and $k_{{11}} w$. In particular, restricting our attention to the two equilibria $w^*, w^{**}$ of \eref{e:schlogl} (not modified by the addition of the coupling reactions), our system is equivalent to two Belusov-Zhabotinski reactions with
      $$k_0 \to k_0^{(\cdot)} := k_0 + k_{10} w^{(\cdot)}\,,\quad k_{4} \to k_{4}^{(\cdot)} := k_{4} + k_{{11}} w^{(\cdot)}\,,$$
      for $\cdot = *, **$, respectively. Therefore, choosing $k_{{10}}$ and
      $k_{{11}}$ appropriately, we obtain a \abbr{crn} displaying bi-stable behavior between a cycle and a chaotic attractor.

      The system resulting from the composition of \eref{e:chaos}, \eref{e:schlogl} and \eref{e:coupling} satisfies the asiphonic condition (\aref{a:1} (a)) but not the strongly endotactic one (\aref{a:1} (b)): Along directions $n_0 = (-3,3,3,1)$ and $n_1 = (3,0,3,1)$ there is a $n$-explosive reaction in $\R_{n}$ for $n \in \{n_0,n_1\}$. However, the system can be made strongly endotactic by addition of the following reactions:
      \begin{equ}\label{e:perturbative}3Y \xrightharpoonup{k_{12}} \emptyset\,, \qquad X + Z + W \xrightharpoonup{k_{13}} X\,.\end{equ}

      Remember that the strongly endotactic property guarantees stability of the system in the limit of large $\|x\|_1$, and is indifferent to the dynamic behavior of the system within a compact. This is reflected in the purely topological nature of this condition, which is in particular independent on the reaction rate constants $\{k_r\}_{r \in \R}$. Consequently, the reaction rates of \eref{e:perturbative} can be chosen small enough not to influence, qualitatively, the nature of the (compact) attractors, while ensuring the asymptotic stability and exponential tightness of the system. In \fref{f:chaos} a Poincar\'e section of one of the  attractors of this \abbr{crn} suggests that its chaotic behavior is indeed not influenced, for our choice of reaction rate constants, by the addition of reactions in \eref{e:perturbative}.

      \begin{figure}
       \centering
       \includegraphics[width=.6\linewidth]{\figuresfolder/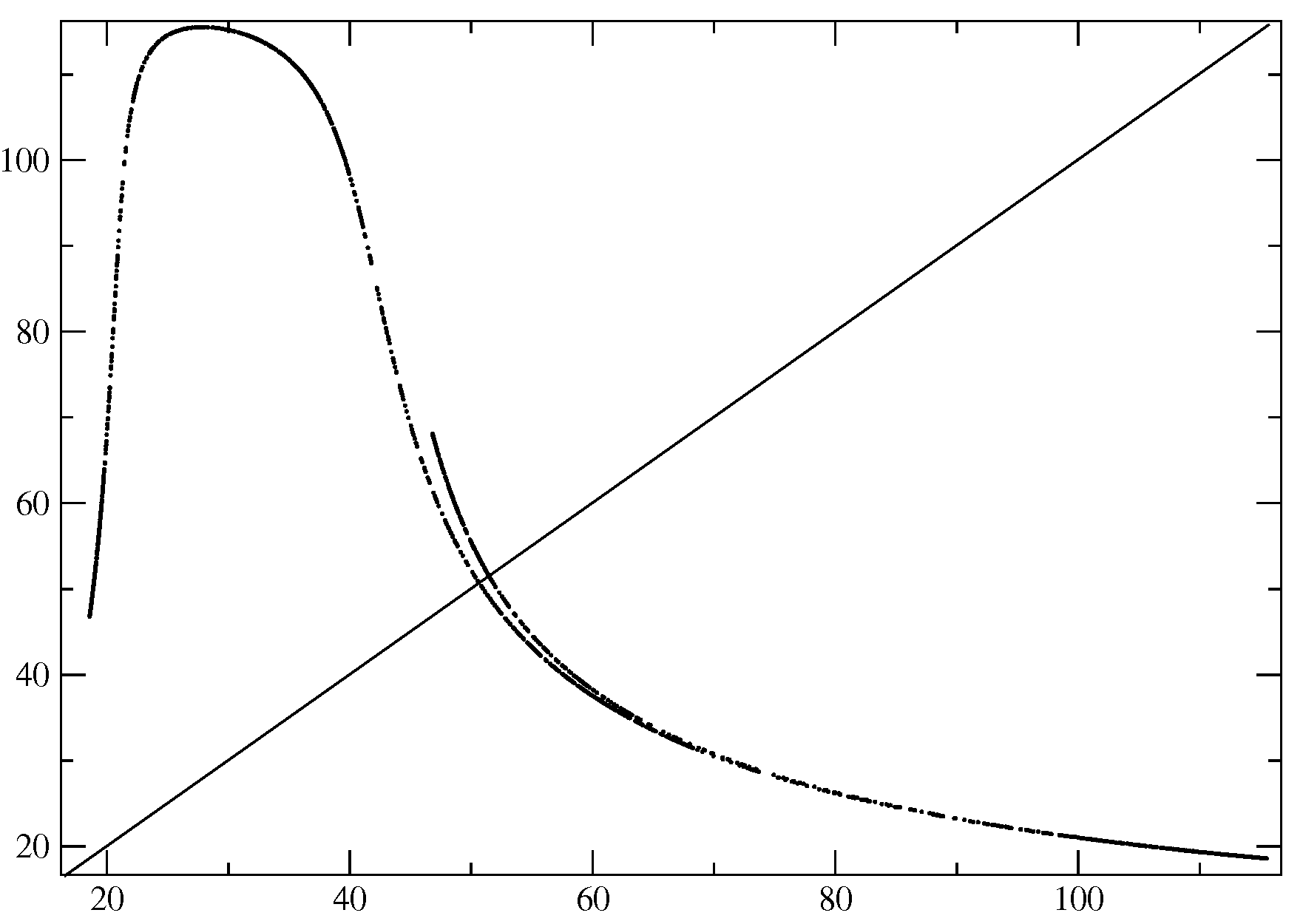}
       \caption{Poincar\'e section of the $x-y$ projection of the trajectory of the dynamical system \eref{e:maexample} for the choice of parameters of $k_{0} = 2.5, k_{1} = 0.0099, k_{2} = 1.9851, k_{3} = 0.4963, k_{4} = 0.0769, k_{5} = 0.6352,   k_{6} = 0.33, k_{7} = 2, k_{8} = 0.001, k_{9} = 0.001, k_{10} = 3, k_{11} = 10^{-9}, k_{12} = 0.01, k_{13} = 10^{-9}$ and $w \equiv w^* = 0.0389$. Assuming the existence of an analytic graph underlying the iterates of the Poincar\'e map, the associated kneading sequence hints for chaotic behavior of the attractor, conditioned on the negativity of the Schwartzian derivative of the map.  This property is stable under small perturbations of the parameters above. The solid black diagonal is the $x = y$ line. }
       \label{f:chaos}
      \end{figure}

      To summarize, the system
      $$\emptyset \xrightharpoonup{k_0} X \xrightharpoonup{k_1} Y \xrightharpoonup{k_2} Z \xrightharpoonup{k_3} \emptyset\,, \quad Z \xrightharpoonup{k_4} X + Z\,, \quad X + 2Y \xrightharpoonup{k_5} 3Y \xrightharpoonup{k_{12}} \emptyset\,,$$
      $$\emptyset \xrightleftharpoons[k_6]{k_7} W {\xrightharpoonup{k_{10}}} X + W\,,\quad 2W \xrightleftharpoons[k_{8}]{k_{9}} 3W\,,\quad Z + W {\xrightharpoonup{k_{11}}} X + Z + W \xrightharpoonup{k_{13}} X\,,$$
      governed by the deterministic mass action \abbr{ode}s
      \begin{equ} \label{e:maexample}
       \begin{cases}
        \dot x & = k_{0} + k_{10} w + z (k_{4} + k_{11} w)  - k_{1} x - k_{5} x y^2 \\
        \dot y & = k_{1} x + k_{5} x y^2 - k_{2} y - 3 k_{12} y^3                   \\
        \dot z & = k_{2} y - k_{3} z - k_{13} xzw                                   \\
        \dot w & = k_{{7}} + k_{{9}} w^2 - k_{{6}} w - k_{{8}} w^3 - k_{13} xzw
       \end{cases}~,
      \end{equ}
      displays, if modeled stochastically, spontaneous transitions between a chaotic and a cyclic attractor, as displayed in \fref{fig:sub2}. Moreover we note that the system respects \aref{a:ase} and  \aref{a:2} (for large enough $\mathcal D$).
      \begin{figure}
       \centering
       \begin{subfigure}{.5\textwidth}
        \centering
        \includegraphics[width=.8\linewidth]{\figuresfolder/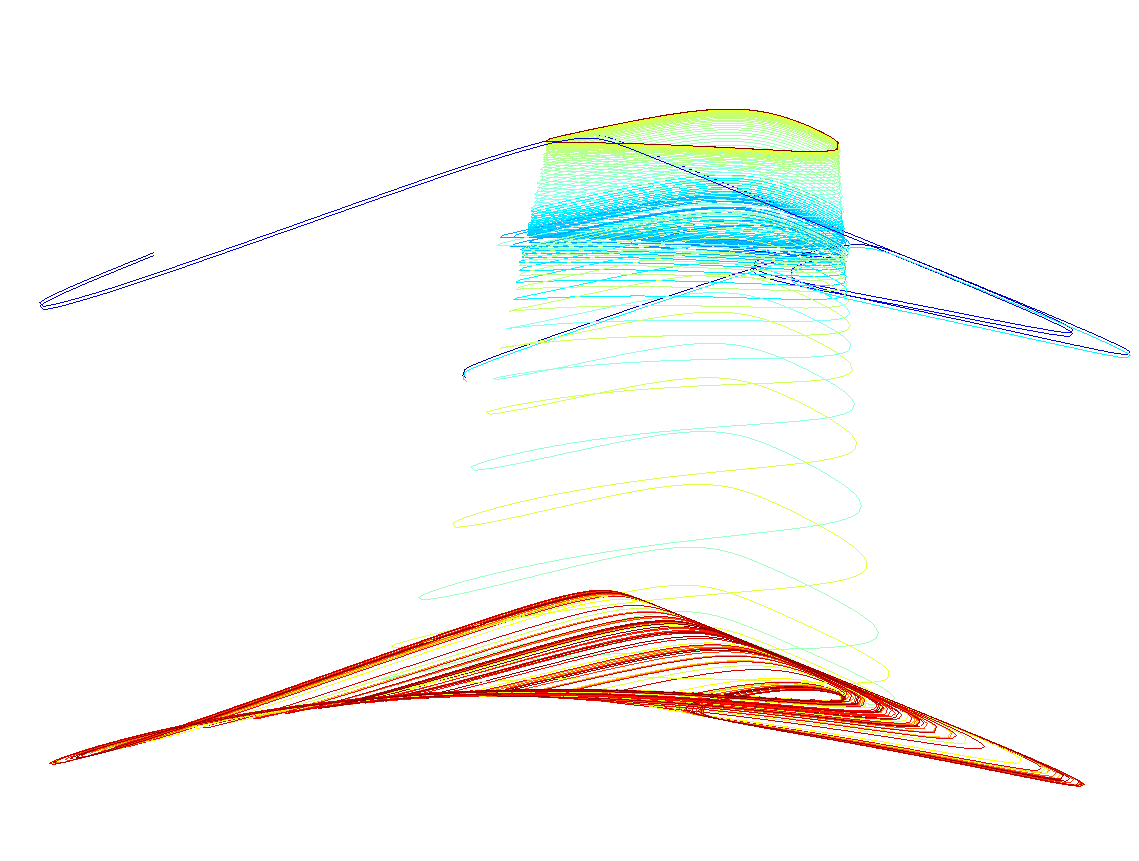}
        \caption{}
        \label{fig:sub1}
       \end{subfigure}%
       \begin{subfigure}{.5\textwidth}
        \centering
        \includegraphics[width=.8\linewidth]{\figuresfolder/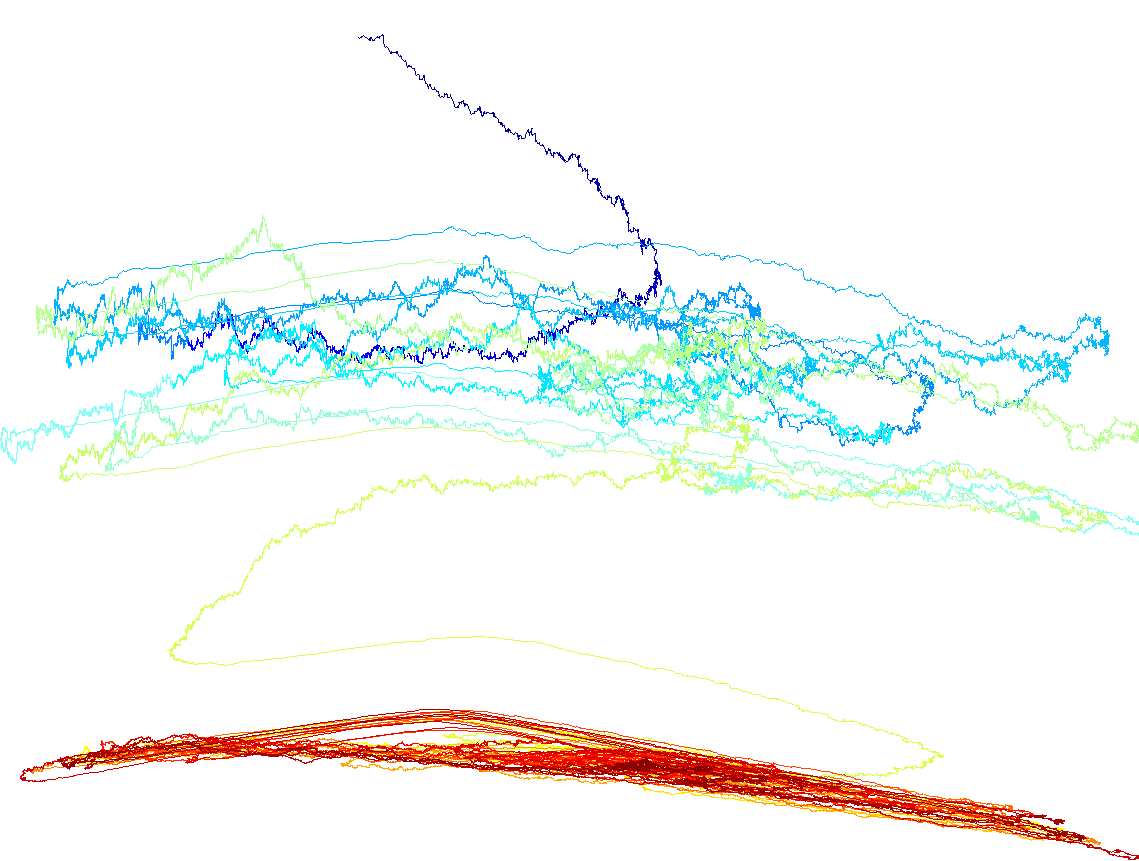}
        \caption{}
        \label{fig:sub2}
       \end{subfigure}
       \caption{Projection on the $xzw$-space of simulated deterministic (a) and stochastic (b) dynamics of a multi-attractor system, where one of the attractors is a limit cycle (in blue-green), while the other is a R\"ossler (chaotic) attractor (in red).
        In the deterministic case, multiple trajectories have been plotted, with starting points in each of the two basins of attraction. In the stochastic case, a spontaneous transition between the attractors is observed. The parameters of the simulation are those presented in \fref{f:chaos}.}
       \label{fig:simul}
      \end{figure}
      For a specific choice of reaction rate constants, some trajectories of this dynamical system, projected on the $x,y,w$ plane, are displayed in \fref{fig:simul}.

      \subsection{Conclusion - Non-equilibrium quasipotential}

      We have shown that the stochastic dynamics of certain
      \abbr{crn}-s can be studied through large deviations theory, {\it e.g.}, through \cite[Theorems 1.8, 1.9]{agazzi17},\footnote{\!\!\!assuming that the set $\mathcal D$ respects \cite[Assumption A.3]{agazzi17}.} and that it therefore represents an interesting and dynamically nontrivial example of the reach of this theory. Attention must be paid, though, about the extent of applicability of \abbr{wf} theory to this class of systems. For instance the establishment of first- and higher-moment estimates for the distribution of the transition times between attractors of a \abbr{crn} as well as exponential estimates on the invariant measure distribution is in general
      subject to stricter conditions than the ones
      provided here (c.f. \cite{proj1}).

      The family of large deviations rate functions defined in this paper appeared in the queuing theory literature \cite{dupuis16,LDT2,SW2}. In particular, quantities similar to \eref{e:grf} and the corresponding \abbr{WF}-quasipotential are introduced in the context of systems biology as candidates for potential functions in non-equilibrium systems \cite{neqpot2,neqpot1}, where they are used to describe the concentration of the invariant measure $\pi^v(x)$ of a diffusion process $Y_t^v$ approximating $X_t^v$ for $v \to \infty$ over finite time intervals \cite{vankampen92}.
      Furthermore, a Hamilton-Jacobi equation for the analytic calculation of $\VV(x)$ over the phase space $\RR^d$ is derived by studying the first order terms of a \abbr{wkb} expansion of the Fokker-Planck equation for the \abbr{pdf} of the process $Y_t^v$ \cite{toth99,neqpot2,neqpot1}.

      However, the approximation of the process $X_t^v$ through $Y_t^v$ only holds for finite time intervals, while \abbr{wf} estimates extend to infinite time. Furthermore, it is known that, by the different form of the Lagrangian of the \abbr{ldp} for Markov jump and diffusion processes, the exit time and invariant measure estimates predicted by \abbr{wf}-theory will differ, exponentially in $v$, in these two cases \cite{toth99}.
      Possible issues arising from the divergence of the \abbr{wkb} expansion as well as existence and uniqueness of the invariant measure for large enough $v$ should also be explicitly addressed, as they might limit the domain of application of the results presented in such papers. \\In this paper, we have confirmed the partial applicability of such results to systems satisfying Assumptions~\ref{a:1}, \ref{a:2} and \cite[Ass.~A.3]{agazzi17}, rigorously establishing an \abbr{ldp} in path space with the Lagrangian of Markov Jump processes and extending this result to infinite time intervals through the tools of \abbr{wf}-theory, guaranteeing that $\VV(\cdot)$ from \cite{fw98} correctly estimates exit times from compact sets. This suggests that $\VV$ can be considered as a fundamental quantity for the construction of a non-equilibrium potential.\\
      Still, not all the results of \cite{fw98} could be established: The existence of an invariant measure and exponential estimates on its concentration as well as the formulation of a \abbr{hje} for the \abbr{wf}-quasipotential in the case of multiple attractors are not presented in this paper. As shown in \cite{proj1} there exists a family of \abbr{ase} networks which lacks the integrability condition to extend the estimates established in this paper to ones of hitting times and of higher moments of the transition times distributions. Another problem is that the function $\VV$ is in general not everywhere differentiable, preventing its gradient to be a solution to the corresponding \abbr{hje}, at least in the fully rigorous mathematical sense \cite{gawedzki16}. Designing clear and general enough boundaries for the applicability of this far-reaching theory, giving solid theoretical grounds to the intuition developed in \cite{neqpot1} is an interesting topic for future research.

      \section*{Acknowledgements}

      AA's and JPE's research was partially supported by \emph{ERC advanced grant 290843 (BRIDGES)}. AA also acknowledges the \emph{SNSF grant 161866}. AD's research was partially supported by \emph{NSF grant DMS-1613091}.

      \bibliographystyle{sub_JSP}
      \bibliography{bib.bib}

\end{document}